\theoremstyle{definition}
\def\fnum{equation} 
\newtheorem{Thm}[\fnum]{Theorem}
\newtheorem{Cor}[\fnum]{Corollary}
\newtheorem{Lem}[\fnum]{Lemma}
\newtheorem{Con}[\fnum]{Conjecture}
\newtheorem{Exa}[\fnum]{Example}
\newtheorem{Rem}[\fnum]{Remark}
\newtheorem{Pro}[\fnum]{Proposition}
\numberwithin{equation}{section}
\newcommand{\Vol}{{\text{Vol}}}
\newcommand{\dist}{{\text {dist}}}
\newcommand{\distp}{{\text{dist}}_P}
\def\RR{{\bold R}}
\def\SS{{\bold S}}
\newcommand{\e}{{\text {e}}}
\newcommand{\bC}{{\bold{C}}}
\newcommand{\cC}{{\mathcal{C}}}
\newcommand{\cI}{{\mathcal{I}}}
\newcommand{\cH}{{\mathcal{H}}}
\newcommand{\cP}{{\mathcal{P}}}
\newcommand{\cPB}{{\mathcal{P}}B}
\newcommand{\cPT}{{\mathcal{P}}T}
\newcommand{\cPH}{{\mathcal{PH}}}
\newcommand{\cS}{{\mathcal{S}}}
\newcommand{\eqr}[1]{(\ref{#1})}
\title[Space-time regularity of the singular set]{The singular set of mean curvature flow with generic singularities}
\author{Tobias Holck Colding}%
\address{MIT, Dept. of Math.\\
77 Massachusetts Avenue, Cambridge, MA 02139-4307.}
\author{William P. Minicozzi II}%
\thanks{The  authors
were partially supported by NSF Grants DMS  11040934, DMS 1206827,  and NSF FRG grants DMS 
 0854774 and DMS 0853501}
\email{colding@math.mit.edu and minicozz@math.mit.edu}
\begin{document}

\maketitle

\begin{abstract}
 A mean curvature flow starting from a  closed embedded hypersurface in $\RR^{n+1}$ must develop singularities.  We show that if the flow has only generic singularities, then the space-time singular set is contained in   finitely many compact embedded $(n-1)$-dimensional Lipschitz submanifolds plus a set of dimension at most $n-2$.  If the initial hypersurface is mean convex, then all singularities are generic and the results apply.
 
 In $\RR^3$ and $\RR^4$, we show that for almost all times the evolving hypersurface is completely smooth and any connected component of the singular set is entirely contained in a time-slice.  For $2$ or $3$-convex hypersurfaces in all dimensions, the same arguments lead to the same conclusion:  the flow is completely smooth at almost all times and connected components of the singular set are contained in time-slices.
 
A key technical point is a  strong {\emph{parabolic}} Reifenberg property that we show in all dimensions and for all flows with only generic singularities.   We also show that the entire flow clears out very rapidly after a generic singularity.

 These results are essentially optimal.
\end{abstract}

\section{Introduction}
A major theme in PDE's over the last fifty years has been understanding singularities and the set where singularities occur. In the presence of a scale-invariant monotone quantity, 
blowup arguments can often be used to bound the dimension of the 
singular set; see, e.g., \cite{Al}, \cite{F2}.
Unfortunately, these dimension bounds say little about the 
structure of the set.   In this paper we obtain a rather complete description of the singular set for a non-linear evolution equation that originated in material science in the 1920s.  

The  evolution equation is the mean curvature flow (or MCF) of hypersurfaces.  A hypersurface in $\RR^{n+1}$ evolves over time by  MCF if it is locally moving in the direction of steepest descent for the volume element. This equation has been used and studied in material science to model things like cell, grain, and bubble growth.

Under MCF surfaces contract and eventually become extinct.  Along the flow singularities develop.  For instance, a round sphere remains round but shrinks and eventually becomes extinct in a point.  Likewise, a round cylinder remains round and eventually becomes extinct in a line.  For a torus of revolution, the rotational symmetry is preserved  as the torus shrinks and eventually it becomes extinct in a circle.  In these three examples, the singular set consists of a point, a line, and a closed curve, respectively,
 and, in each case, the singularities occur only at a single time.  The natural question is what happens in general?  Are the above examples representative?  Is the singular set contained in a nice submanifold?

The first step towards understanding singularities, and the singular set, in MCF is  blowup analysis.  In the blowup analysis, 
  a sequence of rescalings at a singularity has a subsequence that
converges weakly  to a limiting blowup (or tangent flow).  A priori different subsequences could give  different limits.
A singularity of a MCF is {\emph{cylindrical}} if a blowup  at the singularity is a multiplicity one shrinking round cylinder $\RR^k\times \SS^{n-k}$ for some $k<n$.{\footnote{  
For many of our results (though not all) one can allow the tangent flow to have multiplicity greater than one (cf. \cite{BeWa}), 
however this higher multiplicity does not occur in the most important cases.}}   If at least one tangent flow is  cylindrical, then all are by  \cite{CIM}; in fact, even the axis of the cylinder is unique by
\cite{CM2}. By \cite{CM1},  generic singularities are cylindrical.  Moreover, if the initial hypersurface is mean convex, then all singularities are cylindrical; see, \cite{W1}, \cite{W2}, \cite{W6}, \cite{H2}, \cite{HS1}, \cite{HS2}, \cite{HaK},  \cite{An}.

\vskip2mm
Our main result is that the singular set is rectifiable:

\begin{Thm}	\label{t:main}
Let $M_t \subset \RR^{n+1}$ be a MCF with only cylindrical singularities starting at a  closed smooth embedded hypersurface, then the space-time singular set $\cS$ satisfies:
\begin{itemize}
\item  $\cS$ is contained in finitely many (compact) embedded Lipschitz submanifolds each of dimension at most $(n-1)$ together with a set of dimension at most $(n-2)$.
\end{itemize}
\end{Thm}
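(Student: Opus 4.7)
The plan is to stratify the space-time singular set and handle the top stratum separately, using uniqueness of tangent flows combined with a quantitative Reifenberg-type property.

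First I would stratify: since every singularity is cylindrical, partition $\cS = \bigcup_{k=0}^{n-1} \cS_k$, where $\cS_k$ consists of points whose tangent flow is $\RR^k \times \SS^{n-k}$ (up to scale). A parabolic, White-style stratification using Huisken's monotonicity, the uniqueness of the cylinder type, and a standard dimension-reduction argument yields $\dim \cS_k \leq k$ in space-time. The strata $\cS_0 \cup \cdots \cup \cS_{n-2}$ therefore already lie in a set of dimension at most $n-2$, so the theorem reduces to showing that the top stratum $\cS_{n-1}$ (neck points) is contained in finitely many compact embedded Lipschitz $(n-1)$-submanifolds.

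At each $p \in \cS_{n-1}$ the tangent flow is a shrinking round cylinder over $\SS^1$; by \cite{CIM} this cylinder is unique and by \cite{CM2} its axis $L(p) \subset \RR^{n+1}$ is unique. Together with the time-slice through $p$, this produces a canonical candidate ``tangent $(n-1)$-plane'' $\Pi(p)$ to $\cS_{n-1}$ at $p$. The central step, highlighted in the abstract as the strong parabolic Reifenberg property, is to upgrade this pointwise uniqueness to the uniform quantitative statement:
\begin{equation*}
\text{for every } \eps>0 \text{ there exists } r_0>0 \text{ so that } \cS_{n-1} \cap B_r(p) \subset \{ y : \dist(y, \Pi(p)) < \eps r\}
\end{equation*}
for every $p \in \cS_{n-1}$ and every parabolic scale $r \leq r_0$, where $B_r$ denotes a parabolic ball. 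I would prove this by combining (i) a uniform \L ojasiewicz-type inequality at cylindrical shrinkers, giving a definite polynomial rate of convergence of the rescaled flow at $p$ to its tangent cylinder, with (ii) a compactness/contradiction argument ruling out that the axis can rotate by more than $\eps$ between comparable scales: any failure would produce a blowup sequence whose limit violated the uniqueness of the axis from \cite{CM2} or the cylindrical hypothesis.

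With the Reifenberg property in hand, a bi-Lipschitz version of Reifenberg's topological disk theorem (in the style of Toro and David--Toro, adapted to the parabolic metric) expresses a neighborhood of each $p \in \cS_{n-1}$ inside the stratum as the image of a Lipschitz map from an $(n-1)$-disk in $\Pi(p)$. Since the space-time singular set of a flow starting from a closed embedded hypersurface is compact, finitely many such charts cover $\cS_{n-1}$, completing the proof.

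The main obstacle is the quantitative Reifenberg step: turning the pointwise uniqueness of the axis from \cite{CM2} into a scale-invariant estimate that is uniform over all singular points and stable as a singularity interacts with nearby singularities from the same or a different stratum. This demands a \L ojasiewicz--Simon inequality at cylindrical shrinkers strong enough to be integrated along the entire rescaled flow, and most of the work will consist of controlling the interference between distinct singular points at comparable scales so that the Reifenberg estimate does not degenerate.
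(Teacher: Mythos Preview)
Your outline is close to the paper's approach, but there is one genuine gap in the step from the Reifenberg property to the Lipschitz parametrization.

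You correctly state a \emph{strong half} Reifenberg property: $\cS_{n-1}\cap B_r(p)\subset \{\distp(\cdot,\Pi(p))<\eps r\}$ with a plane $\Pi(p)$ that does not depend on scale. But you then invoke ``a bi-Lipschitz version of Reifenberg's topological disk theorem (David--Toro)''. Those theorems need the \emph{full} (two-sided) Reifenberg condition, i.e.\ the plane also lies in a tube around the set. Here that fails: $\cS_{n-1}\setminus\cS_{n-2}$ may have gaps (think of a Cantor-type subset of a line, or even finitely many points), so nothing forces the plane to be close to the set. The paper stresses this point explicitly and only claims half Reifenberg. The substitute is an extra, easy-to-prove property: the distribution of planes $p\mapsto\Pi(p)$ is \emph{$f$-regular}, meaning planes at nearby singular points are quantitatively close. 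Strong half Reifenberg plus $f$-regularity implies, by an elementary argument (Lemma~\ref{l:2p2}/Theorem~\ref{t:Lipgraph} in the paper), that orthogonal projection to $\Pi(p)$ is bi-Lipschitz on a neighborhood of $p$ in the stratum. That is simpler than Reifenberg's disk theorem and is exactly what gives containment in a Lipschitz $(n-1)$-submanifold (a graph over a \emph{subset} of $\Pi(p)$, not a full disk).

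Two smaller remarks. First, the Reifenberg step itself is less delicate than your final paragraph suggests: once you import the uniqueness statement from \cite{CM2} in the quantitative form ``$(j,\eta)$-cylindrical on time-scale $\tau$'' (Theorem~\ref{t:techmon} here), a direct geometric comparison of the two cylindrical regions at nearby singular points---matching radii and axes on an overlap---yields both the half Reifenberg bound and the $f$-regularity of the plane distribution (Proposition~\ref{t:prA}); no separate compactness/contradiction is needed. Second, for the lower strata the paper does not fall back on White's dimension reduction: it runs the \emph{same} argument on each $\cS_k\setminus\cS_{k-1}$ away from the higher strata and obtains countably many $k$-dimensional Lipschitz pieces, which in particular gives the dimension bound. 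Your appeal to dimension reduction would suffice for Theorem~\ref{t:main} as stated, but the stronger structural statement (Theorem~\ref{t:detail}) is what the paper actually proves.
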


This theorem is even stronger than one might think since it uses  the parabolic distance  on space-time $\RR^{n+1}\times \RR$. 
The {\emph{parabolic distance}} between   points $(x,s)$
and $(y,t)$ of $\RR^{n+1}\times \RR$ is  
\begin{equation}       \label{e:dp}
    \distp \, ((x,s),(y,t)) = \max \, \{ |x-y| \, , \, |s-t|^{1/2} \} \, .
\end{equation}
This distance scales differently in time versus space and 
the parabolic distance can be much greater than the Euclidean distance for points at nearby times.  
The parabolic Hausdorff dimension is the Hausdorff dimension with respect to parabolic distance.   In particular, 
   time   has dimension two 
 and  space-time   has dimension $n+3$.

Each submanifold   in Theorem \ref{t:main}
is   the   image of a map from a domain{\footnote{The map can be   taken to be a graph over a subset of a time-slice; this is connected to Theorem \ref{t:main2}.}}
 in $\RR^{n-1}$ to $\RR^{n+1} \times \RR$ that is Lipschitz with respect to   Euclidean distance on $\RR^{n-1}$ and   parabolic  distance  on  $\RR^{n+1} \times \RR$.  The
 proof of Theorem \ref{t:main}  relies crucially on uniqueness of tangent flows.

We prove considerably more than what is stated in Theorem \ref{t:main}; see  Theorem \ref{t:detail} in Section \ref{s:4}.  For instance,  we show regularity of the entire stratification of the space-time singular set.  Moreover, we do so without ever discarding {\bf{any}} subset of measure zero of any dimension  as is always implicit in any definition of rectifiable.{\footnote{See, for instance,
 \cite{D}, \cite{F1}, \cite{P}, \cite{ChCT}, \cite{LjT}, \cite{LW}, \cite{S2}.}}  To illustrate the much stronger version, consider the case of evolution of surfaces in $\RR^3$.  In that case, we show that the space-time singular set is contained in finitely many (compact) embedded Lipschitz curves with cylinder singularities together with a countable set of spherical singularities.   In higher dimensions, we show the direct generalization of this.  

\vskip2mm
In the simple examples of shrinking cylinders and tori of revolution, all of the singularities occurred at a single time.  Part (B) in the next theorem gives criteria to explain this.

\begin{Thm}	\label{t:main2}
If $M_t$ is as in Theorem \ref{t:main},  then   $\cS$ satisfies:
\begin{enumerate}
 \item[(A)] $\cS$ is the countable union of   graphs $(x,t(x))$ of  $2$-H\"older functions on subsets of space.
\item[(B)]  Each subset of $\cS$ with finite parabolic $2$-dimensional Hausdorff measure misses almost every time; each such connected subset is contained in a time-slice. 
\end{enumerate}
\end{Thm}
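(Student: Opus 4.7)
The plan is to derive both parts from one structural fact---that each parabolic-Lipschitz submanifold in the decomposition of $\cS$ from Theorem~\ref{t:main} is contained in a single time-slice---together with the iteration of this fact down the full stratification provided by Theorem~\ref{t:detail}.

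For (A), parameterize each top-stratum submanifold by $\Phi=(x,t)\colon U\to\RR^{n+1}\times\RR$, with $U\subset\RR^{n-1}$ an open Euclidean domain and $\Phi$ Lipschitz from the Euclidean to the parabolic metric. Unpacking $\distp(\Phi(u),\Phi(v))\le C|u-v|$ gives $|x(u)-x(v)|\le C|u-v|$ and $|t(u)-t(v)|\le C^2|u-v|^2$, so the time coordinate is $2$-H\"older in $u$. The footnote to Theorem~\ref{t:main} then realizes the submanifold as the graph of a $2$-H\"older function $t$ over a subset $\Omega\subset\RR^{n+1}$ of a time-slice. The $(n-2)$-dimensional remainder is handled by iterating the same argument using the finer stratification from Theorem~\ref{t:detail}, which supplies parabolic-Lipschitz submanifolds of each lower dimension parameterized by Euclidean domains of the corresponding dimension. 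Concatenating the countably many resulting graphs yields (A).

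For (B), the additional ingredient is the rigidity of $2$-H\"older functions on Euclidean domains: a map $f\colon U\to\RR$ with $U$ open in $\RR^k$ and $|f(u)-f(v)|\le C|u-v|^2$ satisfies $|\partial_j f(u)|\le \lim_{h\to 0}C|h|=0$, so $\nabla f\equiv 0$ and $f$ is constant on each connected component of $U$. Applied to the time coordinate in each parameterization, every connected component of $U$ is mapped into a single time-slice; compactness of the submanifolds forces only finitely many such components per stratum, and the recursion produces a countable collection of time-slice pieces whose union contains $\cS$. The time projection of $\cS$---and hence of any $A\subset\cS$ with finite parabolic $\cH^2$ measure---is therefore contained in the corresponding countable set of times, and so has Lebesgue measure zero.

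For the connected-subset statement, $\cS$ is compact (the closed singular set of a flow starting from a closed hypersurface, confined to a compact space-time slab up to extinction), so the closure $\overline{A_c}$ of any connected $A_c\subset\cS$ is itself compact, connected, and contained in $\cS$. If $A_c$ met more than one of the time-slice pieces, the pieces $\cS\cap\{t=t_i\}$ would induce a countable partition of $\overline{A_c}$ into pairwise disjoint, non-empty closed subsets, contradicting Sierpi\'nski's theorem that a compact connected Hausdorff space admits no non-trivial countable partition of this kind. Hence $A_c$ lies in a single time-slice, completing (B). The principal obstacle is the iteration step: Theorem~\ref{t:main} only yields a dimension bound on the $(n-2)$-dimensional remainder, so upgrading this to parabolic-Lipschitz submanifolds at every lower dimension---where the ``$2$-H\"older forces constancy'' argument can be applied at each level---is precisely the work carried out in Theorem~\ref{t:detail}.
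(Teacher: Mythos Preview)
Your treatment of (A) matches the paper's: parabolic Lipschitz forces the time coordinate to be $2$-H\"older, and Theorem~\ref{t:detail} supplies the countable decomposition across all strata.

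The argument for (B) has a real gap. You take each parameterizing set $U\subset\RR^{n-1}$ to be \emph{open} and then use ``$2$-H\"older on an open connected set implies constant.'' Openness is never established. The Lipschitz submanifolds in Theorems~\ref{t:main} and~\ref{t:detail} come from Theorem~\ref{t:Lipgraph} (via Lemma~\ref{l:2p2}): locally $S$ is the bi-Lipschitz graph over $\pi_{y_0}(S\cap\cPB_{r_0/2}(y_0))$, the projection of the singular set itself onto the approximating plane $V_{y_0}$. That projection is merely a subset of the plane, not open in general---for a single neckpinch in $\RR^3$ the top stratum $\cS_1\setminus\cS_0$ is one point, whose projection to the axis is one point in a line. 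Figure~\ref{f:2graph} is there precisely to warn that $2$-H\"older functions on non-open subsets need not be constant.

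Your argument would also prove more than (B): that all of $\cS$ meets only countably many times, with no finite-$\cPH_2$ hypothesis. The paper does not claim this (it becomes automatic only for $n\le 3$; cf.\ Corollary~\ref{c:main2}). The paper's mechanism is Lemma~\ref{l:timeslice}: the vanishing Reifenberg property in Theorem~\ref{t:detail} makes each graph function $2$-H\"older with \emph{vanishing constant}, and a direct covering argument then shows that on any domain $\Omega$ with $\cH_2(\Omega)<\infty$ one has $\cH_1(t(S))=0$. Connectedness of $S$ gives connectedness of $t(S)$, hence $t(S)$ is a single point---no Sierpi\'nski needed.
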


In  (A),    $x$ lies in  a subset  $\Omega$ of space $\RR^{n+1}$ and the time
 function $t: \Omega \to \RR$   is $2$-H\"older.  Recall that a function is $p$-H\"older if there is a constant $C$ so that 
 \begin{align}
 	|t(x) - t(y)| \leq C \, |x-y|^p \, .
\end{align}
  To say that $t=t(x)$ is  $2$-H\"older  
is equivalent to that the  map $x \to (x, t(x))$ into space-time is Lipschitz with respect to the parabolic metric.  
The $2$-H\"older condition is   very strong  and  is rarely considered since
any $2$-H\"older function on an interval must be constant.  However, there are non-constant $2$-H\"older functions on more general subsets,   including 
disconnected subsets, even of $\RR$,  as in Figure \ref{f:2graph}.
Part (B)  shows   constancy of the time function for connected subsets with finite $2$-dimensional measure.

  \begin{figure}[htbp]		
\centering\includegraphics[totalheight=.4\textheight, width=.9\textwidth]{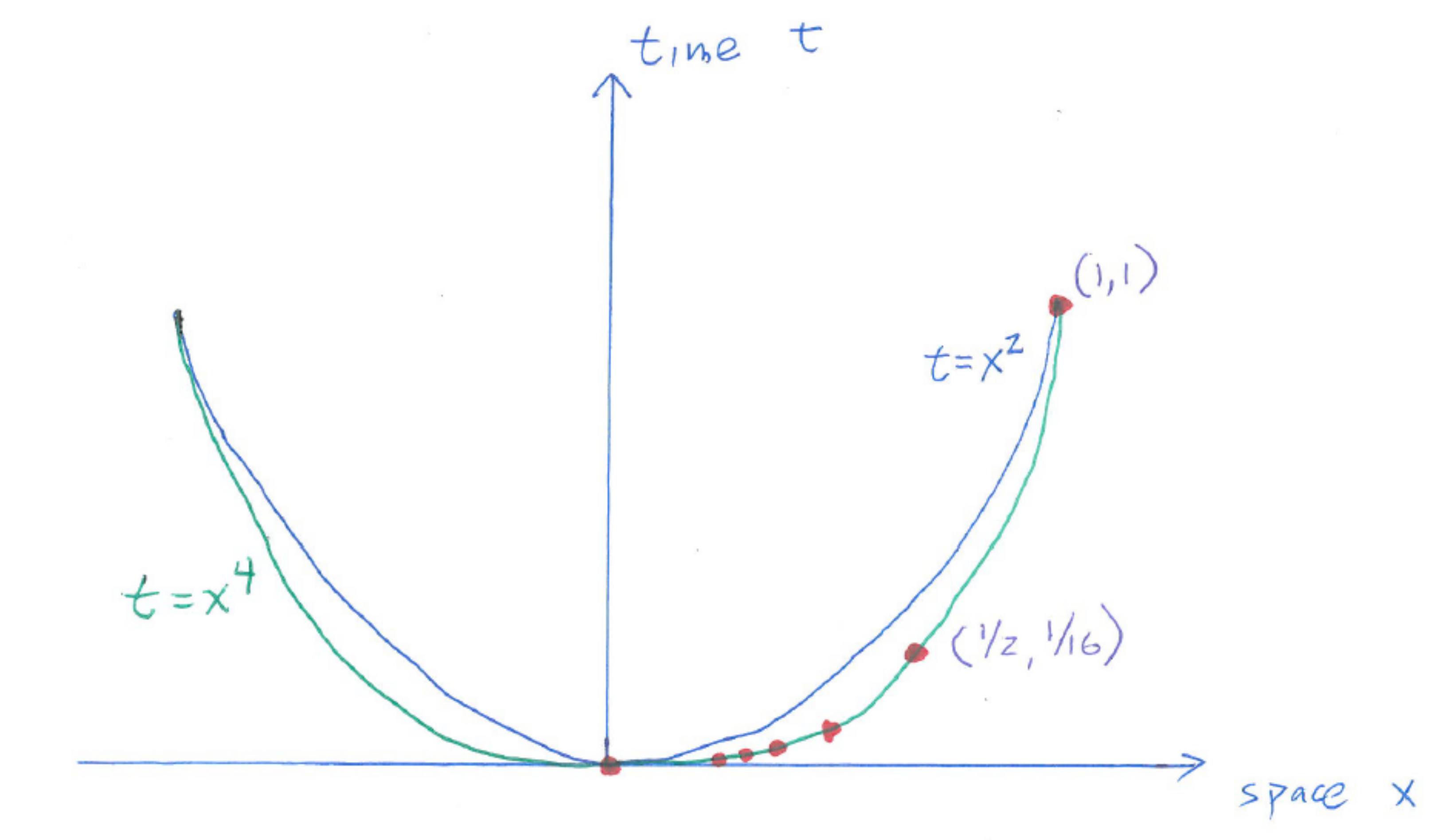}
\caption{An  infinite set that is a $2$-H\"older graph of a non-constant function:  the points $(k^{-1} , k^{-4})$ for $k=1,2, \dots$ and the limit point $0$.}
 \label{f:2graph}
  \end{figure}

 \vskip2mm
Theorems \ref{t:main} and \ref{t:main2} have the following corollaries:

\begin{Cor}	\label{c:main}
Let $M_t \subset \RR^{n+1}$   be a MCF  starting at a  closed smooth embedded mean convex hypersurface, 
 then the conclusions of Theorems
 \ref{t:main} and \ref{t:main2} hold.
\end{Cor}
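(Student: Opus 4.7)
The plan is essentially an invocation of Theorems \ref{t:main} and \ref{t:main2} once we verify their hypothesis that all singularities of the flow are cylindrical. Thus the main task is to explain why, for a closed smooth embedded \emph{mean convex} initial hypersurface, every singularity admits a shrinking cylinder $\RR^k\times \SS^{n-k}$ (for some $0\leq k<n$) as a multiplicity one tangent flow.

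First, mean convexity ($H\geq 0$) is preserved along MCF by the evolution equation for $H$ together with the parabolic maximum principle applied to the smooth flow before the first singular time; this extends through singular times in the appropriate level set / Brakke sense because the flow is unit-regular and the mean convex condition is inherited by weak limits.  Combined with embeddedness and compactness of the initial data, this places us squarely in the setting of mean convex MCF studied in \cite{W1}, \cite{W2}, \cite{W6}, \cite{H2}, \cite{HS1}, \cite{HS2}, \cite{HaK}, \cite{An}, whose convexity estimates (``all blowups are (weakly) convex'') force any tangent flow at any singular point to be a smooth self-shrinker that is both embedded and convex. Huisken's classification of mean convex self-shrinkers (as sharpened in the subsequent works cited above) then identifies each such tangent flow as a generalized round cylinder $\RR^k\times \SS^{n-k}_{\sqrt{2(n-k)}}$ with multiplicity one; this is precisely the definition of a cylindrical singularity used in the paper.

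Having established that every singularity is cylindrical, the hypotheses of Theorem \ref{t:main} and Theorem \ref{t:main2} are satisfied verbatim by $M_t$. Applying Theorem \ref{t:main} gives the rectifiable structure: the space-time singular set $\cS$ is covered by finitely many compact embedded Lipschitz submanifolds of dimension at most $n-1$ together with a set of dimension at most $n-2$ (with Lipschitz measured in the parabolic metric \eqref{e:dp}). Applying Theorem \ref{t:main2} gives the $2$-H\"older graph representation in (A) and the clearing-out in time (B).

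The only substantive point in this argument is the mean convex cylindrical singularities input, and this is a well-developed body of work that we are simply importing; no new analysis is required beyond it. Consequently I would not expect a significant obstacle in this corollary itself, beyond stating the preservation of mean convexity through singularities carefully so that the cited convexity estimates really do apply at every space-time singular point of the level set flow, not only at the first singular time.
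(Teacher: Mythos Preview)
Your proposal is correct and matches the paper's approach: the paper does not give a separate proof of this corollary, instead noting in the introduction (with the same citations \cite{W1}, \cite{W2}, \cite{W6}, \cite{H2}, \cite{HS1}, \cite{HS2}, \cite{HaK}, \cite{An}) that mean convex flows have only cylindrical singularities, so that Theorems~\ref{t:main} and~\ref{t:main2} apply directly. Your write-up is if anything more detailed than what the paper provides.
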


In dimension three and four we get in addition:

\begin{Cor}  \label{c:main2}
If $M_t$ is as in Theorem \ref{t:main} and $n=2$ or $3$, then the evolving hypersurface is completely smooth (i.e., without any singularities) at almost all times.   
In particular, any connected subset of the space-time singular set is completely contained in a time-slice.  
\end{Cor}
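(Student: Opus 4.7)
The plan is to deduce Corollary \ref{c:main2} directly from Theorem \ref{t:main} together with part (B) of Theorem \ref{t:main2}. The key reduction is that, when $n=2$ or $n=3$, the entire space-time singular set $\cS$ has finite parabolic $2$-dimensional Hausdorff measure; once this is established, Theorem \ref{t:main2}(B) applies to $\cS$ itself, and hence to every subset of $\cS$.

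To establish the measure bound, first invoke Theorem \ref{t:main} to cover $\cS$ by finitely many compact embedded Lipschitz submanifolds $\Sigma_1,\dots,\Sigma_N$ of dimension at most $n-1$, together with an auxiliary set $E$ of parabolic Hausdorff dimension at most $n-2$. Each $\Sigma_i$ is the image of a map from a bounded domain in $\RR^{n-1}$ into $\RR^{n+1}\times\RR$ that is Lipschitz when the domain carries the Euclidean metric and the target carries the parabolic distance $\distp$. Since a Lipschitz map does not increase Hausdorff dimension, and a bounded Euclidean subset of $\RR^{n-1}$ has finite $(n-1)$-dimensional Hausdorff measure, each $\Sigma_i$ has finite parabolic $(n-1)$-dimensional Hausdorff measure. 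For $n=2$ this forces $\cS$ to have parabolic Hausdorff dimension at most $1$ and thus zero parabolic $2$-measure; for $n=3$ each $\Sigma_i$ has finite parabolic $2$-measure while $E$ (of parabolic dimension $\le 1$) contributes zero parabolic $2$-measure, so the total parabolic $2$-measure of $\cS$ is still finite.

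Applying Theorem \ref{t:main2}(B) to $\cS$ itself now yields both conclusions at once: $\cS$ meets almost no time, i.e., the evolving hypersurface is completely smooth at almost every time, and any connected subset of $\cS$---being a subset of a set with finite parabolic $2$-dimensional Hausdorff measure---is contained in a single time-slice. There is essentially no obstacle; all the work is in the previously stated theorems, and the corollary amounts to the observation that $n-1\le 2$ precisely when $n\le 3$. In particular, the same strategy breaks down once $n\ge 4$, for then the top stratum can have parabolic dimension $n-1\ge 3$, so its parabolic $2$-dimensional measure may be infinite and Theorem \ref{t:main2}(B) cannot be applied to $\cS$ as a whole.
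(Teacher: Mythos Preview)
Your proof is correct and follows essentially the same approach as the paper: show that $\cS$ has finite parabolic $2$-dimensional Hausdorff measure when $n\le 3$ (the paper cites Theorem \ref{t:detail} directly rather than Theorem \ref{t:main}, but these give the same structural information), and then apply Theorem \ref{t:main2}(B) to $\cS$ itself.
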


\begin{Cor}  \label{c:main3}
For a generic MCF in $\RR^3$ or $\RR^4$, or a flow starting at a closed  smooth embedded mean convex hypersurface in $\RR^3$ or $\RR^4$,
 the conclusion of Corollary \ref{c:main2} holds.  
\end{Cor}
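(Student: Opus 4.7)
The plan is to combine Corollary \ref{c:main2} with two separate inputs that guarantee the hypothesis of that corollary, namely that the flow has only cylindrical singularities.

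First consider the mean convex case. By the cited works of White (\cite{W1}, \cite{W2}, \cite{W6}), Huisken--Sinestrari (\cite{HS1}, \cite{HS2}), Haslhofer--Kleiner \cite{HaK}, and Andrews \cite{An}, any MCF starting at a closed smooth embedded mean convex hypersurface in $\RR^{n+1}$ has the property that every tangent flow is a multiplicity one shrinking round cylinder $\RR^k\times\SS^{n-k}$; in particular every singularity is cylindrical in the sense defined in the introduction. This input is dimension-independent, so it applies directly in $\RR^3$ and $\RR^4$. With this, the hypothesis of Corollary \ref{c:main2} is verified, and applying Corollary \ref{c:main2} yields the desired conclusion in the mean convex case.

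Next consider the generic case. Here I would invoke \cite{CM1}, which shows that after an arbitrarily small perturbation of the initial closed embedded hypersurface (in the appropriate topology) the resulting MCF has only cylindrical (i.e.\ generic) singularities; this is precisely what ``generic MCF'' means in the statement. Thus a generic flow also satisfies the hypothesis of Corollary \ref{c:main2}, and the same corollary gives completeness of the smooth time-slices away from a measure-zero set of times, together with the fact that every connected component of $\cS$ sits in a single time-slice.

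In both cases the logical structure is identical: reduce to the hypothesis ``only cylindrical singularities'' (by mean convexity or by genericity), then invoke Corollary \ref{c:main2}. There is no new estimate or obstruction, since the hard analytic content --- the parabolic Reifenberg property, rectifiability, and the dimension bounds forcing connected components of the singular set into a single time-slice when $n=2,3$ --- is already packaged into Theorem \ref{t:main}, Theorem \ref{t:main2} and Corollary \ref{c:main2}. The only point to be careful about is to check that the sense of ``generic'' used in the statement of Corollary \ref{c:main3} matches the one for which \cite{CM1} produces cylindrical singularities (an open-dense/Baire-generic perturbation of the initial data); this matching is the one mild bookkeeping step, and is the closest thing here to a potential obstacle.
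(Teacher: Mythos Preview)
Your proposal is correct and matches the paper's approach: the paper does not give an explicit proof of this corollary, treating it as an immediate consequence of the facts stated in the introduction (that generic singularities are cylindrical by \cite{CM1}, and that mean convex flows have only cylindrical singularities by the cited works of White, Huisken--Sinestrari, Haslhofer--Kleiner, and Andrews) together with Corollary~\ref{c:main2}. Your reduction to ``only cylindrical singularities'' followed by an application of Corollary~\ref{c:main2} is exactly the intended argument.
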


We get the the same result as in Corollary \ref{c:main3} in all dimensions if we assume that the initial   hypersurface is $2$- or $3$-convex.   A hypersurface is said to be $k$-convex if the sum of any $k$ principal curvatures is nonnegative.  

\subsection{Some ingredients in the proof}

A key technical point in this paper is to prove a strong parabolic Reifenberg property for MCF with generic singularities.  In fact, we will show that the space-time singular set is parabolic Reifenberg vanishing.  In analysis\footnote{See, for instance, \cite{T}.} a subset of Euclidean space is said to be Reifenberg (or Reifenberg flat) if on all sufficiently small scales it is, after rescaling to unit size, close  to a $k$-dimensional plane.  The dimension of the plane is always the same but the plane itself may change from scale to scale and from point to point.  Many fractal curves, like the Koch snowflake, are Reifenberg with $k=1$ but have Hausdorff dimension strictly larger than one; see Figure \ref{f:koch}.  A set is said to be Reifenberg vanishing if the closeness to a $k$-plane goes to zero as the scale goes to zero.  It is said to have the strong Reifenberg property if the $k$-dimensional plane depends only on the point but not on the scale.  Finally, one sometimes distinguishes between half Reifenberg and full Reifenberg, where half Reifenberg refers to that the set is close to a $k$-dimensional plane, whereas full Reifenberg refers to that in addition one also has the symmetric property: The plane  is also close to the set on the given scale.  

  \begin{figure}[htbp]		
\centering\includegraphics[totalheight=.4\textheight, width=.5\textwidth]{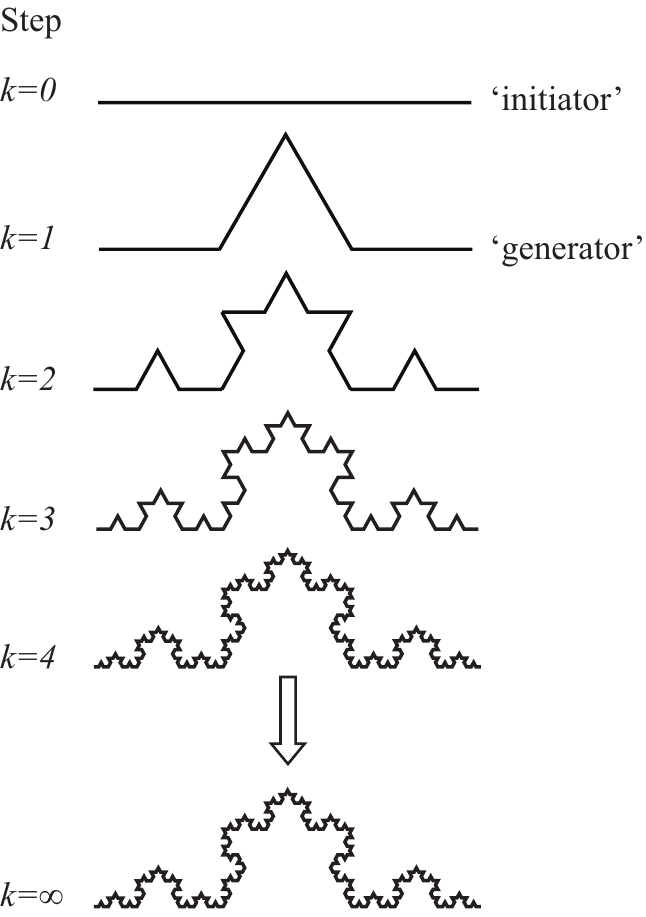}
\caption{The Koch curve is  close to a line on all scales, yet the line that it is close to changes from scale to scale.  It is not rectifiable but admits a H\"older parametrization.}
 \label{f:koch}
  \end{figure}

Using \cite{CM2},  we show in this paper that the singular set in space-time is strong (half) Reifenberg vanishing with respect to  parabolic   distance.     
\subsection{Comparison with prior work}

The results of this paper should be contrasted with a result of Altschuler-Angenent-Giga, \cite{AAG} (cf. \cite{SrSs}).  The paper
\cite{AAG}  showed that in $\RR^3$ the evolution of any rotationally symmetric surface obtained by rotating the graph of a function $r = u(x)$, $a < x < b$ around the $x$-axis is smooth except at finitely many points in space-time where either a cylindrical or spherical singularity forms.  For more general rotationally symmetric surfaces (even mean convex), the singularities can consist of nontrivial curves.  For instance, consider a torus of revolution bounding a region $\Omega$. If the torus is thin enough, it will be mean convex.  Since the symmetry is preserved and because the surface always remains in $\Omega$, it can only collapse to a circle. Thus at the time of collapse, the singular set is a simple closed curve.  In \cite{W1}-\cite{W4} (see, for example, section $5$ of \cite{W3}), White showed that a mean convex surface evolving by MCF in $\RR^3$ must be smooth at almost all times, and at no time can the singular set be more than $1$-dimensional.   
In all dimensions, White, \cite{W1}--\cite{W6}, 
showed that the space-time singular set of a mean convex MCF has parabolic Hausdorff dimension at most $(n-1)$; see also theorem $1.15$ in \cite{HaK}.
White's dimension bounds are proven by classifying the blowups and then appealing to his parabolic version of Federer's dimension reducing argument in [W5].  The dimension reducing 
  gives that the singular set of any MCF with only cylindrical singularities has dimension at most $(n-1)$.

\vskip2mm
We conjecture:

\begin{Con}
Let $M_t \subset \RR^{n+1}$ be a MCF with only cylindrical singularities starting at a closed smooth embedded hypersurface.  Then the 
space-time singular set has only finitely many components.  
\end{Con}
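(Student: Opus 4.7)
The plan is to argue by contradiction using compactness together with the parabolic Reifenberg structure established in the paper. Since $M_0$ is closed and embedded, the flow has a finite extinction time $T$, so the space-time singular set $\cS$ lies in a bounded region of $\RR^{n+1} \times [0,T]$ and, being closed, is compact. If $\cS$ had infinitely many connected components $C_k$, pick $p_k \in C_k$; after passing to a subsequence, $p_k \to p$ in parabolic distance for some $p \in \cS$, and $p$ must lie in a component distinct from $C_k$ for all sufficiently large $k$. The goal is to show that the cylindrical structure at $p$ forces infinitely many $p_k$ into the same component as $p$, yielding the contradiction.

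First I would exploit the tangent flow at $p$: by hypothesis some blowup is a generalized cylinder $\RR^j \times \SS^{n-j}$, with axis $L$ unique by \cite{CM2}. The strong parabolic Reifenberg vanishing property then forces $\cS$, on small parabolic scales around $p$, to sit inside a narrow tube around the affine $j$-plane through $p$, of tube radius $o(\distp(\cdot,p))$. For $j=0$ (spherical singularity) this already makes $p$ parabolically isolated in $\cS$, immediately contradicting accumulation. For $j \ge 1$, I would combine the Reifenberg conclusion with Theorem \ref{t:main}: in a neighborhood of $p$, all of $\cS$ outside a parabolically $(n-2)$-dimensional exceptional set lies on one of finitely many compact embedded Lipschitz $(n-1)$-submanifolds, and the local sheet through $p$ is connected. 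So if infinitely many $p_k$ lie on the same sheet as $p$, they belong to the same connected component of $\cS$ as $p$, which is the desired contradiction.

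The genuine difficulty is the residual case in which the $p_k$ lie in the $(n-2)$-dimensional exceptional set rather than on any of the finitely many Lipschitz submanifolds. I would try to reduce this case inductively using the stratification $\cS = \bigsqcup_{j=0}^{n-1} \cS_j$ by the dimension of the cylindrical axis, promoting each $\cS_j$ along its own stratum to a locally Lipschitz $j$-submanifold via the Reifenberg vanishing property, and proving by induction on $j$ that $\cS_j$ is locally finite at points of higher strata $\cS_{>j}$. The hardest step, and the reason the statement is conjectural, is precisely this cross-stratum local finiteness: it seems to require either a quantitative improvement of the Reifenberg convergence rate or, more plausibly, a monotone bookkeeping invariant --- for instance a definite drop in Gaussian entropy, or a decrease of a topological quantity such as the genus or number of components of $M_t$ --- that changes by a fixed amount at each connected component of $\cS$, so that only finitely many events exhaust the bounded initial budget. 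Identifying such an invariant and showing it decreases across each component is where I expect a proof of the conjecture to stand or fall.
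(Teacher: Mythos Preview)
The statement is a \emph{conjecture} in the paper; it is not proved there, and the paper explicitly presents it as open, noting only that its truth would combine with the paper's results to give finitely many singular times in $\RR^3$ and $\RR^4$. So there is no paper's proof to compare against, and your proposal should be read as an exploration of a possible strategy rather than as a claimed proof.

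Your outline is sensible and you correctly isolate one obstacle (cross-stratum accumulation), but there is a more basic gap in your step for $j\ge 1$ that you do not flag. You assert that if infinitely many $p_k$ lie on the same Lipschitz sheet as $p$, they must lie in the same connected component of $\cS$ as $p$. This does not follow from anything in the paper. The Lipschitz submanifolds of Theorem~\ref{t:main} and Theorem~\ref{t:detail} merely \emph{contain} the relevant stratum of $\cS$; they are produced from the \emph{half} Reifenberg property (see \eqref{e:halfReif} and Example~\ref{ex:1}), which places $\cS$ inside a thin tube around the approximating plane but says nothing about the plane lying near $\cS$. Concretely, the top stratum restricted to a single sheet is only known to be a closed subset of that sheet---the image of the bi-Lipschitz map in Lemma~\ref{l:2p2} and Theorem~\ref{t:Lipgraph} is a subset of the plane, not the whole plane---and a closed subset of a connected Lipschitz graph can perfectly well be a Cantor set or a convergent sequence with infinitely many components. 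Thus even if all the $p_k$ and $p$ lie in $\cS_{n-1}\setminus\cS_{n-2}$ and on one sheet, nothing in the Reifenberg structure forces them into one component of $\cS$.

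In other words, the ``bookkeeping invariant'' you propose at the end (a definite drop in entropy, genus, or number of components per event) is not just a device for the cross-stratum induction; something of that nature already seems necessary to handle accumulation within a single stratum. Your identification of the need for such an invariant is the right instinct, but it enters earlier than you suggest.
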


If this conjecture is true, then it would follow from this paper that in $\RR^3$ and $\RR^4$ MCF
 with only generic singularities is smooth except at finitely many times; cf.  \cite{Ba} and  section $5$ in \cite{W3}.
 
 \vskip2mm
 Each time-slice of a MCF will be a subset of $\RR^{n+1}$, but the space-time track of the flow is a subset of 
 $\RR^{n+1}\times \RR$, where the first $n+1$ coordinates are in space and the last is the time variable.   
With the parabolic distance \eqr{e:dp}, the unit  {\emph{parabolic  ball}} $\cPB_1 (0,0)$ at $x=0$ and
$t=0$ is the product
 of $ B_1 (0)  \subset \RR^{n+1} $ and the unit interval
$(-1,1) \subset \RR$.  Similarly, 
  a parabolic ball of radius $r$ is given by a translated copy of $ B_r (0) \times
(-r^2,r^2)$.  This scaling   implies that the volume of a parabolic
ball of radius $r$ is a constant times $r^{n+3}$.   
Finally, 
 $\cPT_{r} (T)$
will be the {\emph{parabolic tubular neighborhood}} of radius $r$ about a set $T \subset \RR^{n+1}\times \RR$  
\begin{align}
	\cPT_{r} (T) = \{ y \in \RR^{n+1}\times \RR \, | \, \distp (y , T) < r \} \, .
\end{align}

 \vskip2mm
The results proven here are used in \cite{CM6} to prove regularity for the arrival time in the level set method.

\section{Parabolic Reifenberg}

A subset of space $\RR^{n+1}$ is  Reifenberg{\footnote{See \cite{CK}, \cite{DKT}, \cite{HoW}, section $2.3$ of \cite{LY}, \cite{PTT}, \cite{R}, \cite{S1}, and \cite{T}.}}
  if it is close  to some $k$-dimensional plane on all sufficiently small scales.  The $k$-plane can vary from point to point and from scale to scale and  ``close'' means that 
the Hausdorff distance between the set and the $k$-plane is small relative to the scale.  The Koch curve (with small angle) has this property; see Figure \ref{f:koch}.
  If the $k$-plane does not depend on the scale, then the set is {\emph{strong Reifenberg}}.
 We will need a parabolic version   for subsets of space-time, where the parabolic   distance  $\distp$ is used in place of Euclidean distance; see Figure \ref{f:Reif}.

\subsection{Strong parabolic Reifenberg}
We say that a subset $S\subset \RR^{n+1}\times \RR$ has the {\emph{strong parabolic $k$-dimensional Reifenberg property}} if for some small $\delta \in (0,\frac{1}{8})$ and some $r_0>0$:  For all $0<r\leq r_0$, $(x,t)\in S$, there is a $k$-plane $V_{x,t} \subset \RR^{n+1} \times \{ t \}$    so that
\begin{align}	\label{e:halfReif}
     \cP B_r (x,t) \cap S  {\text{ is contained in the $\delta \, r$ parabolic tubular neighborhood of }}    V_{x,t}   \, .
\end{align}

This could be called a \underline{half}\footnote{See, for instance, the remarks on page 258 of \cite{S1}.} Reifenberg because \eqr{e:halfReif} requires only that $S$ is contained a tube about the plane $V_{x,t}$; the \underline{full} Reifenberg is symmetric and requires also that the plane is contained in a tube around $S$.
We emphasize that the $k$-plane  $V_{x,t}$ is contained in $\RR^{n+1} \times \{t\}$   
and is allowed to depend only on the point $(x,t)$ in space-time, but not  on the scale $r$.     The required closeness in \eqr{e:halfReif} is proportional to the scale.

  \begin{figure}[htbp]		
\centering\includegraphics[totalheight=.55\textheight, width=1.05\textwidth]{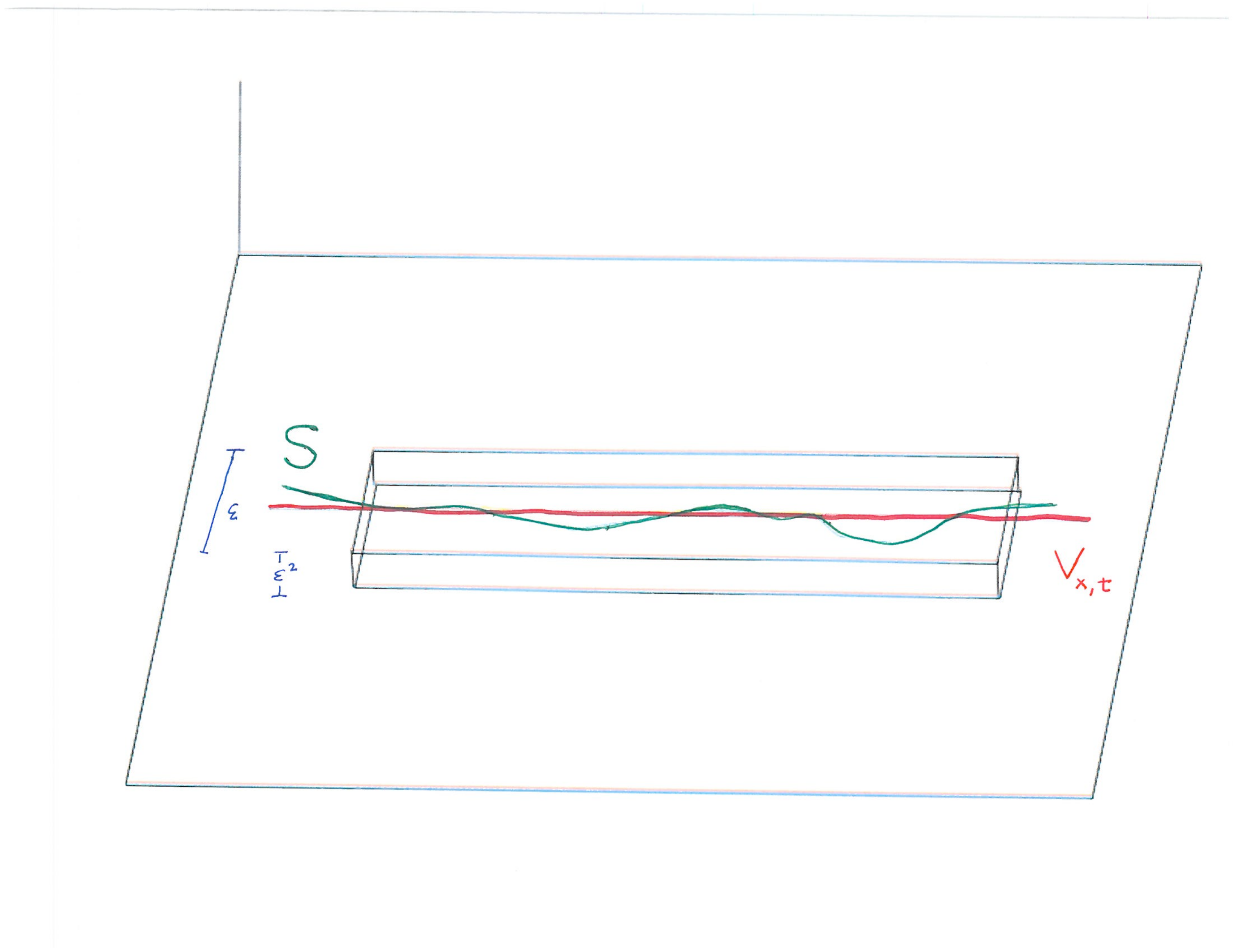}
\caption{The parabolic Reifenberg property illustrated on one scale: The green set $S$ lies in a parabolic $\epsilon$-tubular neighborhood of the red $k$-dimensional plane $V_{x,t}$.}
%  It also illustrates that uniqueness of blowups is closely related to rectifiability.}  
 \label{f:Reif}
  \end{figure}

It is instructive to keep in mind that a line of the form $t=a\, x$ in space-time $\RR \times \RR$  is parabolic one-dimensional Reifenberg  only if $a=0$.  More generally, any $C^1$ connected curve   satisfying the parabolic one-dimensional Reifenberg condition must be   in a time-slice.

We say that a subset $S\subset \RR^{n+1}\times \RR$ is {\emph{Reifenberg vanishing}} if $\delta=\delta (r)\to 0$ as $r\to 0$.  

 The next example is strong $1$-Reifenberg on scales $< 1$ and strong $2$-Reifenberg on larger scales.  It is $1$-Reifenberg on all scales, but not strong.

\begin{Exa}	\label{ex:1}
(The following example is in space; there are similar examples in space-time.)

The set consisting of the four points $(0,0,0)$, $(1,0,0)$, $(0,\epsilon,0)$, $(1,0,\epsilon)$ in $\RR^3$ (see   Figure \ref{f:example})
satisfies the strong $2$-dimensional Reifenberg property on the scale $2$ (and, in fact, on all scales) with 
$\delta = \epsilon$.    
The approximating two-planes can be taken to be $V_{(0,0,0)} = V_{(0,\epsilon,0)} = \{ z = 0\}$ and $V_{(1,0,0)} = V_{(1,0,\epsilon)} = \{ y= 0 \}$.   
\begin{itemize}
\item
The four points {\underline{do not}} satisfy the strong $2$-dimensional {\emph{full}} Reifenberg property: the points are locally contained in  tubular neighborhoods of the planes, but the planes are not locally contained in neighborhoods of the points  because of the gaps in the set.
\item
The four points   {\underline{do not}}  have the strong $1$-dimensional Reifenberg property at scale $2$  for $\delta = \epsilon$.  To see this, observe that the approximating line at $(0,0,0)$  must be close to the $y$-axis at scale $\epsilon$ and  close  to  the $x$-axis at scale $2$.  Thus,   no single line  works at both scales.
 \end{itemize}
 
 \end{Exa}

The next lemma will give a condition that forces Reifenberg sets to be bi-Lipschitz graphs.  Some additional condition is necessary since
 Example \ref{ex:1} satisfies the strong $2$-dimensional Reifenberg property on the scale $2$, but there is no single $2$-plane where the projection is bi-Lipschitz with constant close to one.  The extra condition is \eqr{e:23} in the next lemma.  This will give that the planes for nearby points are close, giving   additional regularity of the set.   In the lemma, $d_{\cP H}$ denotes  parabolic Hausdorff distance.

\begin{Lem}	\label{l:2p2}
There exists $\delta>0$ such that if 
$y_0\in S\subset \RR^{n+1}\times \RR$, $S$ has the strong $(\delta , r_0)$-Reifenberg property and 
\begin{align}	\label{e:23}
d_{\cP H}(\cP B_{r_0}(y_0)\cap S,\cP B_{r_0}(y_0)\cap V_{y_0})<\delta \, r_0 \, . 
\end{align}
then the projection $\pi_{y_0} :\cP B_{\frac{r_0}{2}}(y_0)\cap S\to V_{y_0}$ is a bi-Lipschitz map to its image. 
This implies the set  is a graph $x \to (x,t(x))$ over a subset of space and
  $x \to t(x)$ is  $2$-H\"older  on space.
\end{Lem}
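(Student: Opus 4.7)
The plan is to show bi-Lipschitzness of $\pi_{y_0}$ by first establishing that the plane $V_p$ at any $p \in S \cap \cP B_{r_0/2}(y_0)$ is angle-close (as a $k$-subspace of $\RR^{n+1}$, forgetting the time slice it sits in) to $V_{y_0}$. A preliminary observation: the strong half Reifenberg property at $p$ applied as $r \to 0$ forces $p \in V_p$, since $\distp(p, V_p) \leq \delta r$ must hold for every $r \leq r_0$.

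To compare the planes, fix an intermediate scale $\rho = r_0/4$ and choose a well-separated affine frame $v_0, \ldots, v_k \in V_{y_0} \cap \cP B_{\rho}(p)$ spanning $V_{y_0}$, with mutual separations of order $\rho$. The full Reifenberg hypothesis \eqr{e:23} at $y_0$ supplies $s_i \in S$ with $\distp(s_i, v_i) < \delta r_0 = 4\delta\rho$, so $s_i \in \cP B_{2\rho}(p) \subset \cP B_{r_0}(y_0)$. Strong half Reifenberg at $p$ at scale $2\rho$ then puts each $s_i$, hence each $v_i$, within $C\delta\rho$ of $V_p$. A standard perturbation estimate for well-separated frames yields that the angle between $V_{y_0}$ and $V_p$ is at most $C\delta$.

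For the bi-Lipschitz bound, take $p = (x_1, t_1), q = (x_2, t_2) \in S \cap \cP B_{r_0/2}(y_0)$ and set $r = \distp(p,q)$. Strong half Reifenberg at $p$ at scale $r$ gives both $|t_2 - t_1|^{1/2} \leq \delta r$ and $\dist(x_2, V_p) \leq \delta r$; the first forces $r = |x_1 - x_2|$ once $\delta < 1$. Writing $x_2 - x_1 = a + b$ with $a$ parallel to $V_p$ and $|b| \leq \delta r$, the $C\delta$ angle bound makes $\pi_{y_0}|_{V_p}$ nearly isometric, giving $|\pi_{y_0}(p) - \pi_{y_0}(q)| \geq (1 - C'\delta)r$. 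The reverse inequality $|\pi_{y_0}(p) - \pi_{y_0}(q)| \leq |x_1 - x_2| \leq r$ is automatic, since the image lies in a time slice where parabolic distance equals Euclidean distance. Injectivity of the spatial projection follows at once: if $(x,t_1),(x,t_2) \in S$ then $\pi_{y_0}$ agrees on them, forcing $t_1 = t_2$ by bi-Lipschitzness. Thus $S$ is a graph $x \mapsto (x,t(x))$, and the bound $|t(x_1) - t(x_2)| \leq \delta^2|x_1-x_2|^2$ extracted above is exactly the claimed $2$-H\"older estimate.

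The main obstacle is the angle-closeness step. The full Reifenberg hypothesis is only assumed at the single center $y_0$ at scale $r_0$, yet we need to compare planes at every nearby $p$ on every smaller scale. The mechanism is to have the full Reifenberg at $y_0$ supply $S$-points near a spanning frame of $V_{y_0}$, and then to invoke strong half Reifenberg at $p$ to press those spanning points against $V_p$. The choice of intermediate scale $\rho$ must be made so that the error $\delta r_0$ is negligible against $\rho$ while still $2\rho \leq r_0$; everything else is then a routine perturbation computation.
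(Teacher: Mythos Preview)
Your proof is correct and follows essentially the same route as the paper's. Both arguments hinge on showing that for every $p \in S \cap \cPB_{r_0/2}(y_0)$ the plane $V_p$ is $C\delta$-close to $V_{y_0}$, by pushing points of $V_{y_0}$ first into $S$ (via the full Reifenberg hypothesis \eqr{e:23} at $y_0$) and then into $V_p$ (via the strong half Reifenberg at $p$); the bi-Lipschitz estimate then follows by decomposing $x_2 - x_1$ along and orthogonal to $V_p$. The only cosmetic difference is packaging: the paper records the plane comparison as a tube inclusion ($\dagger$) and isolates the flip ``$V$ in tube of $W$ $\Rightarrow$ $W$ in tube of $V$'' as a separate linear-algebra lemma, whereas you discretize to a well-separated affine frame and quote a perturbation estimate for the angle---these are equivalent formulations of the same fact.
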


\begin{proof} 
To keep the notation simple,  translate so that
$y_0 = (0,0)$ and rescale so that $r_0 = 1$.   We will omit the $(0,0)$ below and  write $V$ for $V_{0,0}$,  $\pi$ for $\pi_{0,0}$, and $\cP B_r$ for $\cP B_r (0,0)$.  Given any $y \in \RR^{n+1}$, we can decompose it into
\begin{align}
	y =   \pi (y) + y^{\perp}  
\end{align}
where      $y^{\perp} \in \RR^{n+1}$ is the   part orthogonal to $V$.

The projection $\pi$ is  Lipschitz, so we must show that $\pi$ is one-to-one and the inverse is  Lipschitz.   This will follow by showing that if
 $(y,t)$ and 
$(z,s) \in \cP B_{\frac{1}{2}}\cap S$, then 
 \begin{align}	\label{e:2p2a}
 	\left| t- s \right|^{ \frac{1}{2} } &\leq C\, \left|  \pi (y)   - \pi (z) \right| \, , \\
	\label{e:2p2b}
 	\left|  y^{\perp} - z^{\perp} \right| &\leq C\, \left|  \pi (y)   - \pi (z)  \right| \, ,
\end{align}
 where the constant $C$ depends only on $\delta$ and $n$.

We will show that \eqr{e:2p2a} and \eqr{e:2p2b} follow from the strong Reifenberg property plus
\begin{enumerate}
	\item[($\dagger$)]	If $(y,t) \in \cP B_{ \frac{1}{2} }\cap S$, then $B_{\frac{1}{2}} (y) \cap V_{y,t} \subset  T_{4\delta}(V +y)$,
\end{enumerate}
where $T_s(K)$ is the spatial tubular neighborhood of radius $s$ about a subset $K \subset \RR^{n+1}$ and we have identified $V_{y,t} \subset \RR^{n+1} \times \{ t\}$ with the parallel plane in $\RR^{n+1}$.

{\it{Proof of \eqr{e:2p2a} and \eqr{e:2p2b} assuming $(\dagger)$}}:   If we let $r=\distp ( (y,t) , (z,s))$,  then  
\begin{align}	\label{e:zhere}
	(z,s) \in \cP T_{\delta \,r}(V_{y,t})\, .
\end{align}
  Since $V_{y,t}$ is contained in a time-slice, 
this implies that
\begin{align}	\label{e:whatgi}
	\left| t-s \right|^{ \frac{1}{2} }  \leq \delta \, r = \delta \, \max \,  \left\{ \left| t-s \right|^{ \frac{1}{2} }  , |y-z|
		\right\} \, .
\end{align}
Since $\delta < 1$, we see that $r = |y-z|$.
Scaling the conclusion in ($\dagger$) gives   
\begin{align}
	B_{r} (y) \cap V_{y,t} \subset  T_{8\delta\, r}(V +y) \, . 
\end{align}
Combining this with \eqr{e:zhere}   gives
\begin{align}
	\pi (z)  + z^{\perp} = z  \in \overline{B_r(y)} \cap T_{\delta \,r}(V_{y,t}) \subset  T_{9 \delta \,r}(V +y)   = T_{9 \delta \,r}(V +y^{\perp})  \, ,
\end{align}
where we used that   $V$ is invariant under translation by the vector $\pi (y) \in V$.  Thus, we get 
\begin{align}
	  z^{\perp} \in    T_{9 \delta \,r}(V +y^{\perp})     \, ,
\end{align}
which implies that $\left| z^{\perp} - y^{\perp} \right| \leq 9 \delta \,r$.  As long as $\delta < \frac{1}{9}$, this implies \eqr{e:2p2b}.  
 Equation \eqr{e:2p2a} follows immediately from \eqr{e:2p2b} and \eqr{e:whatgi}.

{\it{Proof of   $(\dagger )$}}:
 Use \eqr{e:23} and the strong Reifenberg property at $y$ to get
\begin{align}
	  \cP B_{ \frac{1}{2} }\cap V \subset    \cP B_{ 1 }(y,t) \cap  \cP T_{\delta}(S) \subset    \cP T_{2\delta}(V_{y,t}) \, .
\end{align}
This implies that $V_{y,t}$ intersects $B_{2\delta}$, so we get $T_{2\delta}(V_{y,t}) \subset T_{4\delta} (V_{y,t} - y)$ and, thus,
\begin{align}
	  B_{ \frac{1}{2} }\cap V  \subset    T_{4\delta}(V_{{y,t}} - y) \, .
\end{align}
Finally, since   $\dim V = \dim V_{{y,t}}$, 
we can apply Lemma \ref{l:linalg} below to get  ($\dagger$).

\end{proof}

 \begin{Lem}	\label{l:linalg}
 Suppose that $V,W$ are both $k$-planes through $0$ in $\RR^{n+1}$.  If $B_1 \cap V \subset T_{\delta}(W)$ for some $\delta \in (0,1)$, then 
 $B_1 \cap W \subset T_{\delta}(V)$.
 \end{Lem}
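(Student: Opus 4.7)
The plan is to reduce the hypothesis to a condition on the principal angles of the pair $(V,W)$, which is manifestly symmetric in $V$ and $W$, whereupon the conclusion follows by swapping their roles. First I would invoke the principal angle decomposition: there exist orthonormal bases $\{v_1,\ldots,v_k\}$ of $V$ and $\{w_1,\ldots,w_k\}$ of $W$, together with angles $0 \leq \theta_1 \leq \cdots \leq \theta_k \leq \pi/2$, such that $\langle v_i, w_j \rangle = \cos\theta_i \, \delta_{ij}$. This is a standard consequence of the singular value decomposition applied to the $k \times k$ Gram matrix of inner products between an arbitrary initial pair of orthonormal bases for $V$ and $W$; the resulting angles $\theta_i$ depend only on the unordered pair $\{V,W\}$. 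The orthogonal projections $\pi_V,\pi_W$ onto $V,W$ then satisfy $\pi_W v_i = \cos\theta_i \, w_i$ and $\pi_V w_i = \cos\theta_i \, v_i$.

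Next I would establish that the hypothesis is equivalent to the single estimate $\sin\theta_k \leq \delta$. For the forward direction, the vectors $(1-\eps) v_k \in B_1 \cap V$ have distance $(1-\eps)\sin\theta_k$ from $W$, so letting $\eps\to 0^+$ in the hypothesis forces $\sin\theta_k \leq \delta$. For the reverse direction, any $v = \sum a_i v_i \in B_1 \cap V$ satisfies
\begin{equation*}
	\dist(v,W)^2 = |v|^2 - |\pi_W v|^2 = \sum a_i^2 \sin^2 \theta_i \leq \Bigl( \sum a_i^2 \Bigr) \sin^2 \theta_k \leq \sin^2\theta_k \leq \delta^2 .
\end{equation*}

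Finally, since the condition $\sin\theta_k \leq \delta$ is symmetric in $V$ and $W$, applying the same equivalence with the roles of $V$ and $W$ interchanged gives $B_1 \cap W \subset T_\delta(V)$, which is the desired conclusion. The only step that is not purely routine is the principal angle decomposition in Step~1, and even there no genuine obstacle arises, since it reduces to the SVD of a single $k \times k$ matrix; everything else is elementary algebra on the coordinates $(a_i)$.
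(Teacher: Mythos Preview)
Your argument is correct. You reduce the containment $B_1 \cap V \subset T_\delta(W)$ to the scalar inequality $\sin\theta_k \le \delta$ on the largest principal angle, and since the principal angles are a symmetric invariant of the pair $\{V,W\}$, the conclusion follows at once. The computations are all sound; in particular, the identity $\dist(v,W)^2 = \sum a_i^2 \sin^2\theta_i$ is exactly what is needed, and the limiting argument with $(1-\eps)v_k$ correctly handles the open tubular neighborhood.

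The paper takes a more bare-hands route that avoids invoking SVD or principal angles altogether. It simply observes that the orthogonal projection $\Lambda:V\to W$ satisfies $|\Lambda(v)|^2 \ge (1-\delta^2)|v|^2$ by the hypothesis, so $\Lambda$ is injective and hence (by equality of dimensions) surjective; then for any $w\in B_1\cap W$ one writes $w=\Lambda(v)$ and checks directly that the distance from $w$ to the line through $v$ is at most $\delta$. The paper's proof is shorter and needs no structural decomposition, while yours has the advantage of making the underlying symmetry completely transparent and of giving the exact equivalence ($B_1\cap V \subset T_\delta(W) \iff \sin\theta_k\le\delta$), which is a slightly sharper statement than what the lemma asks for.
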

 
 \begin{proof}
 Let $\Lambda: V \to W$ be orthogonal projection onto $W$.  Thus, if $v \in V$, then
 \begin{align}	\label{e:myvw1}
 	 \left| v- \Lambda (v) \right| = \dist (v , W)  \leq \delta \, |v| \, , 
 \end{align}
 where the   inequality used that $B_1 \cap V \subset T_{\delta}(W)$.  It follows that
 \begin{align}	\label{e:myvw2}
 	 \left| \Lambda (v) \right|^2 = |v|^2 -   \left| v-\Lambda (v) \right|^2 \geq (1- \delta^2) \, |v|^2 \, .
 \end{align}
 We conclude that the linear map $\Lambda$ is injective.  Since $V$ and $W$ have the same dimension, $\Lambda$ is also   onto.  Thus, given any $w \in B_1 \cap W$, there exists $v \in V$ with
 $\Lambda (v) = w$.  Finally, by  \eqr{e:myvw1}, the sine of the angle between $v$ and $w$ is at most $\delta$ and, thus, the distance from $w$ to the line through $v$ is at most $\delta$.

 \end{proof}

\vskip2mm
In Example \ref{ex:1}, the two pairs of points  could be separated   by working on scales less than one so that only one pair was visible at a time.   This is not possible in the next example, where we have arbitrarily  close points with approximating planes that are very different.  This example will be strong  $1$-Reifenberg, but not on a uniform scale.

\begin{Exa}
(As above, the following example is in space; there are similar examples in space-time.)
The set consisting of the union of the three sequences 
$(\epsilon^{n},0,0)$, $(\epsilon^{2n},\epsilon^{2n+1},0)$, $(\epsilon^{2n+1},0,\epsilon^{2n+2})$ satisfies the strong Reifenberg property for $\epsilon>0$ sufficiently small.  This set contains
two sequences which converge to the origin inside two planes which are perpendicular to each other.
\end{Exa}

 \begin{figure}[htbp]		
\centering\includegraphics[totalheight=.4\textheight, width=.9\textwidth]{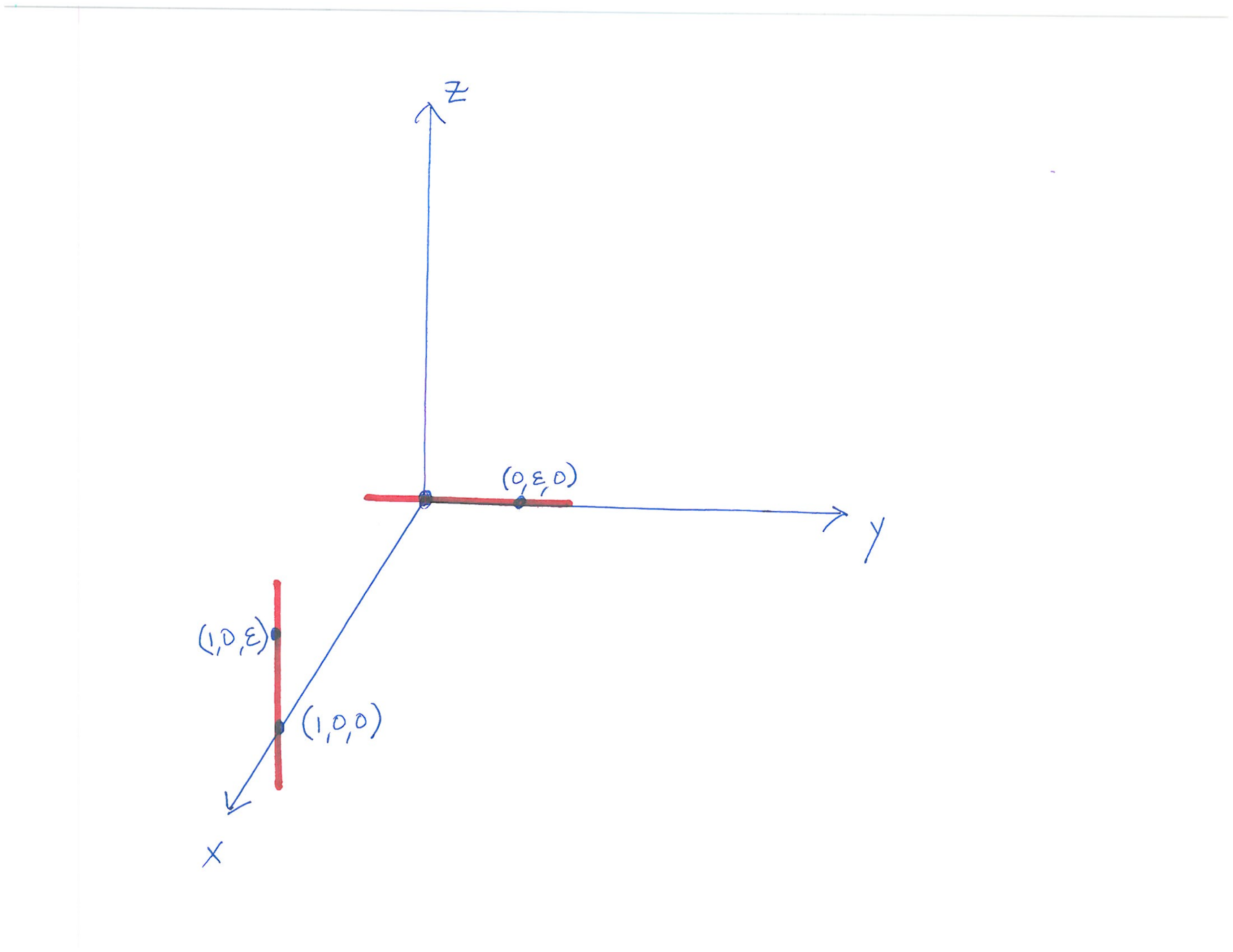}
\caption{The set $\{ (0,0,0), (1,0,0) , (0,\epsilon,0),(1,0,\epsilon)\}$  satisfies the strong $2$-dimensional Reifenberg property but there is no  $2$-plane where the projection is bi-Lipschitz with constant close to one.}
%  It also illustrates that uniqueness of blowups is closely related to rectifiability.}  
 \label{f:example}
  \end{figure}

In the applications that we have in mind, we will not able to appeal to Lemma \ref{l:2p2}.  However, 
 the distribution $\{V_{y,t}\}$ of $k$-planes in our applications will have an additional regularity property which, as we will see,  implies regularity of the set $S$.  Roughly speaking, this property is equi-continuity of the   distribution $\{V_{y,t}\}$:

Suppose we have a distribution of $k$-dimensional  planes $\{V_{y,t}\}$ with $V_{y,t} \subset \RR^{n+1} \times \{ t \}$
 labeled by a subset $S\subset \RR^{n+1}\times \RR$ with $(y,t)\in V_{y,t}\cap S$.  For some $\delta>0$ sufficiently small, let $f:(0,1)\to (0,\delta)$ be a monotone non-decreasing function with $\lim_{r\to 0} f(r)=0$.  We will say the distribution $\{V_{y,t}\}$ is {\emph{$f$-regular}} if for all $r>0$ and all $(y_1,t_1)$, $(y_2,t_2)\in S$ 
\begin{align}
d_{\cP H}(\cP B_r(y_1,t_1)\cap V_{y_1,t_1},\cP B_r(y_1,t_1)\cap V_{y_2,t_2})<f(r)\,r\, ,
\end{align}
for $r=\distp((y_1,t_1),(y_2,t_2))$.

Later in our applications to MCF with generic singularities,
 we will show that not only does the space-time singular set satisfy the strong parabolic (half) vanishing Reifenberg property but the distribution of $k$-planes will be $f$-regular.{\footnote{In fact, one can take $f(r)\approx (\log |\log r|)^{-\alpha}$ ($\alpha>0$).}}

\vskip2mm
The next theorem is the main result of this section.    The strong Reifenberg property  gives a bi-Lipschitz approximation when the approximating planes are close at nearby points.    In Lemma  \ref{l:2p2},     \eqr{e:23} implies this closeness.  If the distribution is $f$-regular, then the closeness is automatic.

\begin{Thm}  \label{t:Lipgraph}
If $S\subset \RR^{n+1}\times \RR$ satisfies the  
strong parabolic Reifenberg property and the distribution of  $k$-planes is $f$-regular, then for $(y,t)\in S$ fixed the projection $\pi:S\to V_{y,t}$ is a bi-Lipschitz map from a neighborhood of $(y,t)$ in $S$ to its image (equivalently, near $(y,t)$, $S$    is a Lipschitz graph over part of $V_{y,t}$).
\end{Thm}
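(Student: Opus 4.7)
The plan is to reduce to the scheme of Lemma \ref{l:2p2}. The only role of hypothesis \eqref{e:23} in that lemma was to establish property $(\dagger)$ --- closeness, on a definite scale, of the plane $V_{y',t'}$ at a nearby point $(y',t')$ to the translate $V+y'$; once such closeness is available (with any sufficiently small constant in place of $4\delta$), the Pythagorean computation at the end of Lemma \ref{l:2p2} produces the bi-Lipschitz estimates \eqref{e:2p2a} and \eqref{e:2p2b}. My strategy is to derive the analog of $(\dagger)$ directly from the $f$-regularity of the distribution $\{V_{y',t'}\}$, rather than from a Hausdorff-closeness hypothesis like \eqref{e:23}.

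After translating $(y,t)$ to the origin and writing $V=V_{(0,0)}$, $\pi=\pi_{(0,0)}$, fix $r_0>0$ small enough that $f(r_0)$ is much smaller than the Reifenberg constant $\delta$ of Lemma \ref{l:2p2}. For any $(y',t')\in \cP B_{r_0/2}(0,0)\cap S$, set $r'=\distp((0,0),(y',t'))$ and apply $f$-regularity to the pair $(0,0)$, $(y',t')$ at scale $r'$ to obtain
\begin{equation*}
d_{\cP H}\bigl(\cP B_{r'}(0,0)\cap V,\ \cP B_{r'}(0,0)\cap V_{y',t'}\bigr) \,<\, f(r')\,r' \,\leq\, f(r_0)\,r'.
\end{equation*}
From this parabolic Hausdorff bound I would extract an angular comparison in the spirit of Lemma \ref{l:linalg}, by sampling $k+1$ affinely independent points in $V\cap \cP B_{r'}(0,0)$ together with matching nearby points in $V_{y',t'}\cap \cP B_{r'}(0,0)$: the spatial linear directions of $V$ and of $V_{y',t'}$ (identified with parallel $k$-planes in $\RR^{n+1}$) differ by an angle at most $Cf(r_0)$. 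Since $V_{y',t'}$ passes through $y'$, this upgrades to the spatial inclusion $B_R(y')\cap V_{y',t'}\subset T_{Cf(r_0)R}(V+y')$ for every $R\leq r_0$, i.e.\ the analog of $(\dagger)$ with the constant $4\delta$ replaced by the small quantity $Cf(r_0)$.

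With this analog of $(\dagger)$ in hand, the argument proceeds exactly as in Lemma \ref{l:2p2}: for any two points $(y',t'),(z',s')\in \cP B_{r_0/4}(0,0)\cap S$ and $r=\distp((y',t'),(z',s'))$, the strong Reifenberg property at $(y',t')$ places $(z',s')$ within $\delta r$ of $V_{y',t'}$, and the plane closeness above then places $z'$ within $(\delta+Cf(r_0))\,r$ of $V+y'$; the Pythagorean computation from Lemma \ref{l:2p2} yields \eqref{e:2p2a} and \eqref{e:2p2b}, so $\pi$ is bi-Lipschitz in a neighborhood of $(y,t)$, giving the graph description. The main obstacle I expect is the angular extraction in the second paragraph: one must handle both the anisotropic parabolic metric (time scales as length squared) and the fact that $V$ and $V_{y',t'}$ lie in different time-slices, so a careful variant of Lemma \ref{l:linalg} --- sampling points whose spatial and temporal contributions to the parabolic distance are correctly balanced --- is needed to produce a clean angular bound with a usable constant.
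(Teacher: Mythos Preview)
Your proposal is correct and follows essentially the same route as the paper: the paper's proof of Theorem~\ref{t:Lipgraph} consists of the single observation that one runs the argument of Lemma~\ref{l:2p2} verbatim, with $f$-regularity supplying condition~$(\dagger)$ in place of hypothesis~\eqref{e:23}. Your write-up simply unpacks what the paper leaves implicit, including the angular comparison via Lemma~\ref{l:linalg}; the ``main obstacle'' you flag is not a genuine difficulty, since the strong Reifenberg property already forces $|t'|^{1/2}\le \delta r'$, placing the two planes in nearby time-slices and reducing the parabolic Hausdorff comparison to a spatial one.
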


\begin{proof}
The proof is a slight variation of the proof of Lemma \ref{l:2p2}, where the $f$-regularity gives the condition ($\dagger$) there.  
\end{proof}

\section{Cylindrical tangent flows}

In this section, we first recall the Gaussian surface area, the monotonicity formula for MCF and its basic properties.
After that, we
  record a  consequence of the uniqueness theorem of \cite{CM2} for cylindrical singularities that will be the key to establishing the strong parabolic Reifenberg property for the singular set.  Recall that we will only prove the half Reifenberg, meaning that the set lies in a small tubular neighborhood of a plane but not vice versa.

\subsection{Gaussian surface area}

The $F$-functional, or Gaussian surface area,  of a hypersurface $\Sigma \subset \RR^{n+1}$ is
\begin{align}
	F_{x,\tau} (\Sigma) = \left( 4 \pi \tau \right)^{ - \frac{n}{2} } \, \int_{\Sigma} \e^{ - \frac{ |y-x|^2}{4\tau} } \, dy \, ,
\end{align}
where  the Gaussian is centered  at $x \in \RR^{n+1}$   and $\sqrt{ \tau } >0$ is the scale.  
%The $F$-functionals are well-defined when $\Sigma$ is a Borel measure with sub-exponential volume growth.
The entropy $\lambda$ is the supremum over all Gaussians  (i.e., over all centers $x_0$ and scales $\sqrt{t_0}$)
\begin{equation}
  \lambda (\Sigma)=\sup_{x_0 , t_0}   \, \,  F_{x_0,t_0} (\Sigma)  \, .
\end{equation}

 Huisken's monotonicity formula{\footnote{Ilmanen and White 
extended the monotonicity to the case where $M_t$ is a Brakke flow.}}, \cite{H1},  for a MCF $M_t \subset \RR^{n+1}$ states that 
$F_{x,\tau}(M_{t-\tau})$ is increasing in $\tau$ for every fixed $x$ and $t$.  That is,  if $0 < \tau_1 < \tau_2$, then    
\begin{align}	\label{e:huisken}
	F_{x,\tau_1 } (M_{t  -\tau_1})  \leq F_{x,\tau_2 } (M_{t-\tau_2})  \, 
\end{align}
 The monotonicity gives  an upper semi-continuous limiting Gaussian density $\Theta_{x,t}$    
 \begin{align}	\label{e:density}
 	\Theta_{x,t} = \lim_{\tau \to 0_+} F_{x,\tau} (M_{t-\tau}) \, .
 \end{align}
 The limiting density is at least one at each point in the support of the flow and greater than one at each singularity.

By definition, a tangent flow is the limit of a sequence of parabolic dilations 
at a singularity, where the convergence is on compact subsets. For instance, a tangent flow
to $M_t$ at the origin in space-time is the limit of a sequence of rescaled flows $\frac{1}{\delta_i}\,M_{\delta_i^2 \,t}$ where $\delta_i\to 0$. By   Huisken's monotonicity formula and an argument of Ilmanen and White, \cite{I1}, \cite{W4}, tangent flows are shrinkers, i.e., self-similar solutions of MCF that evolve by rescaling. 
A priori, different sequences $\delta_i$ could give different tangent flows; the question of   uniqueness of the blowup is whether the limit is independent of the sequence. 
In [CM2] (see also \cite{CM5}) it was proven that tangent flows at cylindrical singularities are unique. That is, any other tangent flow is also a cylinder with the same $\RR^k$ factor that points in the same direction.

 \subsection{Cylindrical singularities}
 
 Throughout this subsection, $M_t$ will be a MCF in $\RR^{n+1}$ with entropy at most $\lambda_0$.  All constants will be allowed to depend on $n$ and $\lambda_0$.

 We will let
  $\cC_k$ denote the   cylinder  of radius $\sqrt{2(n-k)}$
  \begin{align}
  	\RR^k\times \SS^{n-k}_{\sqrt{2(n-k)}} 
\end{align}
 and its rotations and translations in space.{\footnote{This is a different convention than in \cite{CM2} where we used $k$ for the dimension of the spherical factor.}}

 \vskip2mm
 It will be useful to have compact notation for spatial rescalings of a set about a fixed point.  Namely, given $z \in \RR^{n+1}$, $r> 0$ and   $\Lambda \subset \RR^{n+1}$, let $\Psi_{z,r}(\Lambda)$ be the rescaling of $\Lambda$ about $z$ by the factor $r$, i.e.,
\begin{align}
	\Psi_{z,r}(\Lambda) = \{ r (x-z) + z \, | \, x \in \Lambda \} \, .
\end{align}
The map $\Psi_{z,r}$ is a rescaling in space only.  

\vskip2mm
We need a notion of what it means for the flow to look like a cylinder near a point.  Namely, 
we will say that $M_t$ is
{\emph{$(j,\eta)$-cylindrical at $(y,t)$ on the time-scale $\tau$}}
 if for every positive $s \leq \tau$
\begin{itemize}
	\item $\Psi_{y,\frac{1}{\sqrt{s}}} \, \left( B_{\eta^{-1}\sqrt{s}}(y) \cap M_{t-s} \right)$ is a graph over   a fixed cylinder in $\cC_j$ of a function with $C^1$ norm at most $\eta$.
\end{itemize}

It is important that the dilates  $\Psi_{y,\frac{1}{\sqrt{s}}} \, \left( B_{\eta^{-1}\sqrt{s}}(y) \cap M_{t-s} \right)$ are graphs over the same cylinder as the time scale $s$ varies. 
The constant $j$ here is the dimension of the Euclidean factor of the cylinder.  The constant $\eta$ measures how close the flow is to the   cylinder in a scale-invariant way and, as $\eta$ gets smaller, the flow is closer to a cylinder on a  larger set.

\vskip2mm
We will need the following result from \cite{CM2} which gives that the flow becomes cylindrical near every cylindrical singularity.  Roughly speaking, this says that if the flow is a graph over a cylinder just before a cylindrical singularity, then it remains a graph over the same cylinder as one approaches the singularity.   Moreover, after rescaling, it is a graph over a larger set and is even closer to the cylinder.

\begin{Thm}	\label{t:techmon}
Given $\eta > 0$, there exists $\epsilon > 0$ so that if $(x_0 , t_0)$ is a cylindrical singularity  in $\cC_j$ and
\begin{itemize}
\item[($\cC$)] 	  $\Psi_{x_0,\frac{1}{\sqrt{2\tau}}} \, \left( B_{\epsilon^{-1}\sqrt{2\tau}}(x_0) \cap M_{t_0-2\tau} \right)$ is a graph over  
 {\underline{some}} cylinder
in $\cC_j$ of a function with $C^1$ norm at most $\epsilon$, 
\end{itemize}
then $M_t$ is $(j,\eta)$-cylindrical at $(x_0,t_0)$ on the time-scale $\tau$.  Furthermore, for each $\bar{\eta} > 0$, there exists $\bar{\tau} \in (0,\tau)$ so that
$M_t$ is $(j,\bar{\eta})$-cylindrical at $(x_0,t_0)$ on the time-scale $\bar{\tau}$.
\end{Thm}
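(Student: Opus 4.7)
The plan is to deduce the theorem from the uniqueness theorem for cylindrical tangent flows proved in \cite{CM2}. Recall that at a cylindrical singularity $(x_0,t_0)$ in $\cC_j$, \cite{CM2} says that \emph{all} rescalings $\Sigma_s := \Psi_{x_0,1/\sqrt{s}}(M_{t_0-s})$ converge, as $s\to 0_+$, to a \emph{single} cylinder $\cC^\star \in \cC_j$ (same axis, same $\RR^j$-factor), and the convergence is smooth on compact subsets. In fact the proof in \cite{CM2} provides a quantitative rate: once the rescaled flow enters a small enough neighborhood of the set of cylinders in $\cC_j$, it converges to a definite cylinder $\cC^\star$ with a \L ojasiewicz-type rate, and by standard parabolic regularity this upgrades to $C^1$-closeness of $\Sigma_s$ to $\cC^\star$ on arbitrarily large balls.

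First I would prove the ``furthermore'' clause. Since $(x_0,t_0)$ is a cylindrical singularity in $\cC_j$, \cite{CM2} gives a unique limiting cylinder $\cC^\star \in \cC_j$; the smooth convergence $\Sigma_s \to \cC^\star$ on compact subsets directly provides, for any prescribed $\bar\eta>0$, a scale $\bar\tau>0$ so that $\Psi_{x_0,1/\sqrt{s}}(B_{\bar\eta^{-1}\sqrt{s}}(x_0)\cap M_{t_0-s})$ is a $C^1$-graph over $\cC^\star$ with norm $<\bar\eta$ for every $s \leq \bar\tau$. This is exactly $(j,\bar\eta)$-cylindricality at $(x_0,t_0)$ on scale $\bar\tau$.

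For the main statement, the point is to promote the hypothesis ($\cC$) at the single scale $2\tau$ into graphicality over the \emph{same} cylinder $\cC^\star$ at all scales $s \leq \tau$, with a norm bound $\eta$ depending only on $\epsilon$. I would argue by contradiction and compactness. Suppose no such $\epsilon$ works for a fixed $\eta$: then there is a sequence of flows $M^i_t$ and cylindrical singularities $(x_0^i,t_0^i)\in\cC_j$ satisfying ($\cC$) with $\epsilon_i\to 0$, but failing $(j,\eta)$-cylindricality at scale $\tau$. Normalizing so that $(x_0^i,t_0^i)=(0,0)$ and $\tau=1$, the hypothesis forces the rescalings $\Sigma^i_2$ to converge (along a subsequence, by Brakke regularity) to a cylinder $\cC_0 \in \cC_j$. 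Combined with the uniqueness of tangent flows along each $M^i$, and the fact that the limits $\cC^\star_i$ provided by \cite{CM2} are close to $\cC_0$ for large $i$, the Huisken monotonicity together with the \L ojasiewicz estimate of \cite{CM2} propagates the $\epsilon_i$-closeness from scale $2$ down to scale $1$, contradicting the failure of $(j,\eta)$-cylindricality.

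The main obstacle is step three: one needs a quantitative, one-sided propagation of graphicality from scale $2\tau$ to all scales $s\leq \tau$ over a single fixed cylinder. Monotonicity of $F_{x_0,s}(M_{t_0-s})$ in $s$ gives that the Gaussian density is close to $F(\cC^\star)$ once it is close at one scale, but converting this into uniform $C^1$ graphicality over the \emph{same} cylinder requires preventing the axis from drifting as $s$ decreases. This is precisely where the \cite{CM2} uniqueness (of the axis, not merely of the cylinder up to rotation) must be used in a quantitative form; the compactness argument above effectively packages this, but one must verify that the \L ojasiewicz inequality of \cite{CM2} is applicable uniformly in the contradicting sequence, which is standard given the entropy bound $\lambda_0$.
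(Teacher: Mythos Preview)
Your proposal is correct and takes essentially the same route as the paper: both reduce the statement to the quantitative uniqueness/\L{}ojasiewicz estimate of \cite{CM2}. The paper's proof is in fact nothing more than a direct citation---the main claim is theorem~0.2 of \cite{CM2}, and the ``furthermore'' clause is attributed to footnotes~5 and~6 there---so your compactness-and-contradiction packaging is more detailed than what the paper writes, but you correctly isolate that the compactness step alone only controls scales $s\in[s_0,1]$ and that pushing down to $s\to 0$ with a fixed axis genuinely requires the uniform \L{}ojasiewicz input from \cite{CM2}, which is exactly what the paper invokes.
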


\begin{proof}
The first claim follows from  theorem $0.2$ in \cite{CM2}.  See footnotes $5$ and $6$ in \cite{CM2} for the fact that it becomes even more cylindrical 
(i.e., $(j,\bar{\eta})$-cylindrical for $\bar{\eta}$ smaller than $\eta$) at smaller scales, giving the second claim.
\end{proof}

%The above notion of $(j,\eta)$-cylindrical allows the time-scale $\tau$ depend on the point.  If the same $\tau > 0$ works at each point of a set $S$, then we will say that
 We will also need a version of $(j,\eta)$-cylindrical where the same time-scale $\tau > 0$ works uniformly on a subset $S \subset \cS$.  Namely,  
$M_t$ is  {\emph{uniformly $(j,\eta)$-cylindrical on $S$  on the time-scale $\tau$}}  if 
$M_t$ is
 $(j,\eta)$-cylindrical at each $y \in S$ on the time-scale $\tau$.
%this holds at each $y \in S$ for the same $\tau$ and   $j$.
Thus, the axis of the cylinder may vary with $y$, but the time-scale $\tau$ cannot.
 
 In the next corollary, we will fix  $j$ and   let $S$ be the subset of $\cS$ with singularity in $\cC_j$.

\begin{Cor}	\label{c:techmon}
Given $\eta > 0$, $j$,  and  $y \in S$, there exists $r_y > 0$ and $\tau_y > 0$ so that 
\begin{itemize}
\item
 $M_t$ is uniformly $(j,\eta)$-cylindrical on $\cPB_{r_y} (y) \cap S$ on the time-scale $\tau_y$.  
 \item  For   $\bar{\eta} > 0$, there exists $\bar{\tau} >0$ so that
$M_t$ is uniformly $(j,\bar{\eta})$-cylindrical on $\cPB_{r_y} (y) \cap S$ on   time-scale $\bar{\tau}$.
 \end{itemize}
\end{Cor}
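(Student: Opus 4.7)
The strategy is to reduce, via Theorem \ref{t:techmon}, the uniform cylindrical conclusion at every $y' \in \cPB_{r_y}(y)\cap S$ to a uniform verification of hypothesis $(\cC)$ on a common time-scale. Once $(\cC)$ is established at each such $y'$ on the same time-scale $\tau_y$ with the same parameter $\epsilon = \epsilon(\eta)$ given by Theorem \ref{t:techmon}, the first part of that theorem applied at each $y'$ individually delivers uniform $(j,\eta)$-cylindricality on $\cPB_{r_y}(y)\cap S$ on time-scale $\tau_y$. The second bullet then follows by repeating the argument with $\bar\eta$ in place of $\eta$.

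First I would apply Theorem \ref{t:techmon} at the point $y$ itself, which is allowed since $y$ has a (unique, by \cite{CM2}) cylindrical tangent flow in $\cC_j$, so that hypothesis $(\cC)$ at $y$ is automatically satisfied on all sufficiently small scales. Using the \emph{second} conclusion of Theorem \ref{t:techmon} at $y$ with a tolerance $\eta_1 \ll \epsilon$ to be fixed later, I obtain a time-scale $2\tau_y > 0$ and a fixed cylinder $\cC_y \in \cC_j$ centered at $x_y$ such that for every $s \le 2\tau_y$, the slice $B_{\eta_1^{-1}\sqrt{s}}(x_y)\cap M_{t_y - s}$ is graphical over $\Psi_{x_y,\sqrt{s}}(\cC_y)$ with $C^1$ norm at most $\eta_1$.

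The core of the proof is a transfer step: for $y'=(x',t') \in \cPB_{r_y}(y)\cap S$, one has $|x'-x_y| < r_y$ and $|t'-t_y| < r_y^2$, so $B_{\epsilon^{-1}\sqrt{2\tau_y}}(x')\cap M_{t'-2\tau_y}$ coincides with a subset of $B_{\epsilon^{-1}\sqrt{2\tau_y}+r_y}(x_y)\cap M_{t_y - s}$ for $s = 2\tau_y + (t_y - t')\in(\tau_y, 3\tau_y)$. Provided $r_y$ is chosen small enough that $\epsilon^{-1}\sqrt{2\tau_y}+ r_y \le \eta_1^{-1}\sqrt{s}$, the $(j,\eta_1)$-cylindricality at $y$ shows this piece is $\eta_1$-close in $C^1$ to $\Psi_{x_y,\sqrt{s}}(\cC_y)$. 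Translating this cylinder to be centered at $x'$ introduces an additional $C^1$ error of order $r_y/\sqrt{\tau_y}$, after which rescaling by $\Psi_{y',1/\sqrt{2\tau_y}}$ produces a graph over a cylinder in $\cC_j$ with $C^1$ norm bounded by $C(\eta_1 + r_y/\sqrt{\tau_y})$. Fixing $\eta_1$ and then $r_y$ small enough so that this bound is $\le \epsilon$, hypothesis $(\cC)$ is verified at $y'$ on time-scale $\tau_y$, and Theorem \ref{t:techmon} gives $(j,\eta)$-cylindricality at $y'$ on time-scale $\tau_y$.

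For the second bullet, the same scheme is run with $\bar\eta$ in place of $\eta$: set $\bar\epsilon = \epsilon(\bar\eta)$, choose $\bar\eta_1 \ll \bar\epsilon$, and invoke the second part of Theorem \ref{t:techmon} at $y$ to find a time-scale $\bar\tau$ on which $M_t$ is $(j,\bar\eta_1)$-cylindrical at $y$. Running the transfer step on this scale with the already-fixed $r_y$ yields $(\cC)$ at every $y'\in\cPB_{r_y}(y)\cap S$ with parameter $\bar\epsilon$ on time-scale $\bar\tau$, hence $(j,\bar\eta)$-cylindricality on time-scale $\bar\tau$. The main obstacle is the geometric bookkeeping in the transfer step: one must track how a small spatial translation of the base point (by $r_y$) and a small time shift (by $r_y^2$) perturb the cylinder and its $C^1$ approximation across the range of scales $s \in (\tau_y, 3\tau_y)$, and this is precisely where the scale separation $r_y \ll \sqrt{\tau_y}$ (and later $r_y \lesssim \bar\eta_1^{-1}\sqrt{\bar\tau}$) is needed.
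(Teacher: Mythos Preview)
Your approach to the first bullet is essentially the paper's, just written out in more detail. The paper simply observes that if hypothesis $(\cC)$ holds at $y$ with parameter $\epsilon/2$ on some time-scale $\tau_y$, then by continuity of the smooth flow at time $t_y-2\tau_y$ (and nearby times), $(\cC)$ holds with parameter $\epsilon$ at every $y'$ in a small parabolic ball; one then invokes Theorem~\ref{t:techmon} at each such $y'$. Your transfer step is a concrete implementation of this continuity. One small point: you take the $(j,\eta_1)$-cylindrical conclusion at $y$ on time-scale $2\tau_y$, but your $s=2\tau_y+(t_y-t')$ can lie in $(2\tau_y,3\tau_y)$, outside that range; this is easily fixed by taking the conclusion at $y$ on a slightly larger time-scale.

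For the second bullet, however, there is a genuine quantifier problem. You propose to re-run the transfer step at the new, smaller scale $\bar\tau$, but this requires both $r_y^2<\bar\tau$ (so that $s=2\bar\tau+(t_y-t')$ stays positive and in range) and $r_y/\sqrt{\bar\tau}\lesssim\bar\epsilon$ (so that the translation error is absorbable). Since $\bar\tau$ depends on $\bar\eta$ and may be arbitrarily small, while $r_y$ has already been fixed before $\bar\eta$ is given, these constraints cannot in general be met with the already-fixed $r_y$. Your closing remark that ``$r_y\lesssim\bar\eta_1^{-1}\sqrt{\bar\tau}$ is needed'' flags exactly this, but does not resolve it.

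The paper's route avoids this. Having verified $(\cC)$ at every $y'\in\cPB_{r_y}(y)\cap S$ with the \emph{same} parameters $\epsilon$ and $\tau_y$, one applies the \emph{second} conclusion of Theorem~\ref{t:techmon} directly at each $y'$: this produces, for each $\bar\eta$, a $\bar\tau\in(0,\tau_y)$ on which $M_t$ is $(j,\bar\eta)$-cylindrical at $y'$. The point---implicit in the paper and coming from the quantitative nature of the uniqueness in \cite{CM2}---is that this $\bar\tau$ depends only on $\bar\eta$, $\epsilon$, $\tau_y$, $j$, $n$, $\lambda_0$, and not on the particular point $y'$; hence it is automatically uniform over $\cPB_{r_y}(y)\cap S$. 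So rather than shrinking the ball to match a new scale, you should feed the already-established $(\cC)$ back into Theorem~\ref{t:techmon} and read off the uniform $\bar\tau$.
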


\begin{proof}
Observe that if ($\cC$) holds for $\epsilon/2$ in place of $\epsilon$, then it holds for $\epsilon$ at nearby points.   
The corollary follows from this observation and Theorem \ref{t:techmon}.
\end{proof}

 \section{The singular set of a MCF}		\label{s:4}

Let $M_t$ be a MCF flow   in $\RR^{n+1}$  with only cylindrical singularities   that starts at a closed smooth embedded hypersurface.  This means that
 a tangent flow  at any singularity is a multiplicity one shrinking round cylinder $\RR^k\times \SS^{n-k}$ for some $k<n$.  If at least one blowup is  cylindrical, then all are by  \cite{CIM}
 and the axis of the cylinder is unique by
\cite{CM2}.

The singular set $\cS$ is a compact subset of space-time $\RR^{n+1}\times \RR$ and can be stratified into subsets 
\begin{align}
	\cS_0 \subset  \cS_1 \subset  \dots \subset \cS_{n-1} = \cS \, .
\end{align}
The set   $\cS_k$ consists of all singular points where the tangent flow splits off a Euclidean factor of dimension at most $k$.  Thus, 
\begin{itemize}
\item $y \in \cS_0$ if the tangent flow at $y$ is an $n$-sphere.
\item $y \in \cS_{k} \setminus \cS_{k-1}$ if the tangent flow at $y$ is in $\cC_{k}$.
\end{itemize}
The Gaussian density at a singularity where the tangent flow is in $\cC_k$ is equal to $\Theta_k \equiv F_{0,1}(\cC_{k})$.  The $\Theta_k$'s    are increasing in $k$  with
\begin{align}	\label{e:increasing}
      1 < \Theta_0 < \Theta_1 < \dots < \Theta_{n-1} \, .
\end{align}
Therefore, each strata $\cS_k \setminus \cS_{k-1}$ is  characterized by the value of the Gaussian density   at the singular point.  Namely, $(x,t) \in \cS_k \setminus \cS_{k-1}$ if and only if
$\Theta_{x,t}$ is equal to $\Theta_k$.
 Moreover, by the upper semi-continuity of the Gaussian density and  \eqr{e:increasing},    $\cS \setminus \cS_{k-1}$ is compact for each $k$.
 For example, in $\RR^3$, a limit of  cylindrical singular points must   be cylindrical, while a limit of spherical points must be singular but, a priori, could be either spherical or cylindrical.

We have seen that the top strata $\cS_{n-1} \setminus \cS_{n-2}$  is compact.  The same holds for the lower strata  as long as we stay away from the higher strata:

\begin{Lem}	\label{l:unicone}
The set $\cS_{n-1} \setminus \cS_{n-2}$ is compact. 
Moreover, if $\epsilon > 0$ and $k \in \{ 0 , \dots , n-2 \}$, then  $\left( \cS_k \setminus \cS_{k-1} \right) \setminus \cPT_{\epsilon} \left( \cS \setminus \cS_k \right)$ is compact (with the convention that $\cS_{-1} = \emptyset$). % and has the $(2,\gamma)$-local cone property for any $\gamma > 0$
\end{Lem}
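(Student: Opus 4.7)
The plan is to use the upper semi-continuity of the Gaussian density $\Theta_{x,t}$ combined with the strict monotonicity \eqref{e:increasing} of the values $\Theta_0 < \Theta_1 < \cdots < \Theta_{n-1}$, together with the fact that for our MCF every singular point has a cylindrical tangent flow, so $\Theta_{x,t} \in \{1, \Theta_0, \Theta_1, \dots, \Theta_{n-1}\}$. Both sets in question are bounded because $\cS$ is compact, so the whole issue is closedness.

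First I would handle $\cS_{n-1} \setminus \cS_{n-2}$. Take a sequence $(x_i,t_i) \to (x,t)$ with each $(x_i,t_i) \in \cS_{n-1}\setminus \cS_{n-2}$, so $\Theta_{x_i,t_i} = \Theta_{n-1}$. Upper semi-continuity of the density (from Huisken's monotonicity) gives $\Theta_{x,t} \geq \Theta_{n-1} > 1$, hence $(x,t)$ is singular. Since the only admissible density values are $\Theta_0 < \cdots < \Theta_{n-1}$, we must have $\Theta_{x,t} = \Theta_{n-1}$, i.e. $(x,t) \in \cS_{n-1}\setminus\cS_{n-2}$.

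For the second statement, fix $\epsilon>0$, $k \in \{0,\dots,n-2\}$, and set $S = (\cS_k \setminus \cS_{k-1}) \setminus \cPT_\epsilon(\cS\setminus\cS_k)$. Take any convergent sequence $(x_i,t_i) \to (x,t)$ with $(x_i,t_i) \in S$. Two things to check: $(x,t) \in \cS_k \setminus \cS_{k-1}$, and $(x,t) \notin \cPT_\epsilon(\cS\setminus\cS_k)$. For the first, each $\Theta_{x_i,t_i} = \Theta_k$, so upper semi-continuity gives $\Theta_{x,t} \geq \Theta_k$, hence $\Theta_{x,t} = \Theta_j$ for some $j \geq k$, i.e. $(x,t) \in \cS_j\setminus\cS_{j-1}$. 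For the second, the parabolic distance to $\cS\setminus\cS_k$ is a $1$-Lipschitz (hence continuous) function of the base point, and it equals the distance to the closure of $\cS\setminus\cS_k$; since $\distp((x_i,t_i), \cS\setminus\cS_k) \geq \epsilon$ for all $i$, passing to the limit yields $\distp((x,t), \cS\setminus\cS_k) \geq \epsilon$, so $(x,t)\notin \cPT_\epsilon(\cS\setminus\cS_k)$. Combining: if $j > k$ then $(x,t) \in \cS\setminus\cS_k$ contradicts the distance bound, so $j = k$ and $(x,t) \in S$.

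The only subtle point is the last one, and even that is essentially formal once one observes that distance to a set and distance to its closure coincide, so no closedness of $\cS\setminus\cS_k$ itself is needed. Thus no real obstacle arises; the lemma follows entirely from upper semi-continuity of $\Theta$, the discrete gap structure of \eqref{e:increasing}, and continuity of $\distp(\,\cdot\,,\cS\setminus\cS_k)$.
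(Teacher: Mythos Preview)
Your proof is correct and is essentially the same approach as the paper's. The paper phrases it more tersely: having already observed (just before the lemma) that upper semi-continuity of $\Theta$ together with \eqref{e:increasing} makes each $\cS \setminus \cS_{k-1}$ compact, it reduces both claims to the set-theoretic identity $(\cS_k \setminus \cS_{k-1}) \setminus \cPT_{\epsilon}(\cS\setminus\cS_k) = (\cS \setminus \cS_{k-1}) \setminus \cPT_{\epsilon}(\cS\setminus\cS_k)$, an intersection of a compact set with a closed one; your direct sequential argument unpacks exactly this.
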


 \begin{proof}
 This follows immediately since $\cS \setminus \cS_{k-1}$ is compact for each $k$.
 \end{proof}

We observe next that the lowest strata $\cS_0$ consists of isolated points.

\begin{Lem}	\label{l:isolatedbottom}
The set $\cS_0$ is isolated: for each $y \in \cS_0$, there is a space-time ball $\cPB_{r_y}(y)$ so that $\cPB_{r_y} (y) \cap \cS = \{ y \}$.
\end{Lem}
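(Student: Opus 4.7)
The plan is to argue by contradiction using a parabolic rescaling about $y = (x_0,t_0)$ and exploiting that the tangent flow at a point of $\cS_0$ is, by definition, the shrinking round $n$-sphere, whose space-time singular set consists of a single point. So suppose there is a sequence $y_i = (x_i,t_i) \in \cS \setminus \{y\}$ with $\delta_i := \distp(y_i,y) \to 0$. Each $y_i$ has Gaussian density $\Theta_{y_i} \geq \Theta_0 > 1$ by \eqref{e:increasing}, and in fact upper semi-continuity of the density together with \eqref{e:increasing} forces $y_i \in \cS_0$ eventually, although the weaker lower bound is all that is needed below.

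Next I would perform the parabolic rescaling
\begin{equation}
\tilde{M}^i_s \,:=\, \delta_i^{-1} \bigl( M_{t_0 + \delta_i^2 s} - x_0 \bigr) \, ,
\end{equation}
which is again a MCF, and let $\tilde{y}_i$ be the corresponding rescaled singular point. Since the rescaling is a parabolic similarity, Gaussian density is preserved, so $\Theta_{\tilde{y}_i}(\tilde{M}^i) = \Theta_{y_i}(M) \geq \Theta_0$, and the choice of $\delta_i$ gives $\distp(\tilde{y}_i,(0,0)) = 1$. The uniqueness of tangent flows at cylindrical singularities from \cite{CM2} now ensures that the full sequence $\tilde{M}^i$ (not merely a subsequence) converges on compact subsets of space-time to the shrinking sphere flow $\Sigma$.

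Passing to a further subsequence along which $\tilde{y}_i \to \tilde{y}_\infty$, we have $\distp(\tilde{y}_\infty,(0,0)) = 1$, so in particular $\tilde{y}_\infty \neq (0,0)$. Upper semi-continuity of Gaussian density under convergence of flows then yields
\begin{equation}
\Theta_{\tilde{y}_\infty}(\Sigma) \,\geq\, \limsup_i \Theta_{\tilde{y}_i}(\tilde{M}^i) \,\geq\, \Theta_0 \,>\, 1 \, .
\end{equation}
But the shrinking sphere flow $\Sigma$ has density exceeding $1$ only at its unique space-time singular point $(0,0)$, since at every other space-time point the density is either $0$ (off the support of $\Sigma$) or $1$ (at a smooth point of $\Sigma$). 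This forces $\tilde{y}_\infty = (0,0)$, contradicting $\distp(\tilde{y}_\infty,(0,0)) = 1$. The main leverage in the argument is the use of \cite{CM2} to convert the classical subsequential compactness of parabolic rescalings into full convergence along the specific scales $\delta_i = \distp(y_i,y)$; once that is in hand the contradiction is purely geometric, coming from the fact that the sphere has a single space-time singularity.
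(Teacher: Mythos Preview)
Your argument is correct but follows a quite different route from the paper's. The paper argues directly: since the tangent flow at $y$ is a multiplicity-one sphere, Brakke's regularity theorem forces the flow just before time $t_0$ to consist (near $x_0$) of a single smooth spherical component; once that component is removed, what remains has $F$-functional well below $1$ near $(x_0,t_0)$, and Huisken monotonicity then propagates this bound forward in time and outward in space, ruling out any nearby singularity. You instead run a blowup-and-contradict argument: rescale by $\delta_i = \distp(y_i,y)$, pass to the limit (the shrinking sphere), and use upper semi-continuity of Gaussian density to force $\tilde y_\infty = (0,0)$.

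Two remarks on the comparison. First, your invocation of \cite{CM2} to get convergence of the \emph{full} sequence is not actually needed: it suffices to pass to a subsequence along which $\tilde M^i$ converges to \emph{some} tangent flow, and by the very definition of $\cS_0$ (or the density characterization via \eqref{e:increasing}) that tangent flow must be the shrinking $n$-sphere. So the argument can be made to rest only on compactness and the classification of tangent flows at $y$, not on the deeper uniqueness theorem. Second, the claim that $\Theta_{\tilde y_\infty}(\Sigma) \leq 1$ when $\tilde t_\infty > 0$ requires a word, since a priori the tangent flow is only known to be the sphere for negative times; but this follows immediately from monotonicity, since $\Theta_{\tilde y_\infty}(\Sigma) \leq F_{\tilde x_\infty,\tau}(\Sigma_{\tilde t_\infty - \tau})$ for any $\tau > \tilde t_\infty$, and the right side tends to $0$ as $\tau \downarrow \tilde t_\infty$ because the sphere has collapsed. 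The paper's proof avoids both of these points and is more elementary---it needs only Brakke regularity and monotonicity, not convergence of rescaled flows or upper semi-continuity under varying flows---while your approach has the virtue of being a template that generalizes to other settings where the tangent object has an isolated singular set.
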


\begin{proof}
By assumption, a multiplicity-one sphere is collapsing off at  $y$.  Thus, by Brakke's regularity theorem, this sphere is itself a connected component of the flow just before the singular time.  In particular, the flow {\emph{take away this sphere}} has $F$-functional much less than one just before the singularity on the scale of the singularity.  Monotonicity gives a space-time neighborhood  where the densities (after we take away the sphere) are less than one and, thus, there are no other singularities.
\end{proof}

One immediate consequence of Lemma \ref{l:isolatedbottom} is that
  $\cS_0$ is countable.  This
 was proven by White using scaling and monotonicity.

\subsection{Cylindrical approximation}

Given any   $y \in \cS_j \setminus \cS_{j-1}$,   the flow is asymptotic to a shrinking cylinder at $y$, i.e., it looks like a cylinder in $\cC_{j}$ just before $y$.
Moreover, by \cite{CM2},  this limiting cylinder is unique (it has the same axis on each scale).

 The main result of this subsection will show that uniformly $(j,\eta)$-cylindrical subsets of the singular set  satisfy a strong Reifenberg property.  The dimension of the approximating planes will be   the dimension of the affine space for the cylinders.

 \begin{Pro}	\label{t:prA}
 Suppose that
 $S \subset \cS$   satisfies
 \begin{enumerate}
 \item[($\star \eta$)]  For each $\eta > 0$, there exists $\tau_{\eta} > 0$ so that $M_t$ is uniformly $(j,\eta)$-cylindrical on $S$ on the time-scale $\tau_{\eta} $.
  \end{enumerate}
Then:
\begin{enumerate}
\item $S$ has the strong parabolic $j$-dimensional  vanishing Reifenberg property.
\item The associated distribution of $j$-planes is 
$f$-regular for some function $f$.
%\item $S$ has the vanishing local parabolic cone property.
\end{enumerate}
\end{Pro}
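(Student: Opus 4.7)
The plan is to set $V_{y,t}\subset\RR^{n+1}\times\{t\}$ at each $(y,t)\in S$ to be the affine $j$-plane through $y$ parallel to the $\RR^j$-factor of the unique tangent cylinder in $\cC_j$; this is well-defined by \cite{CM2} and is already implicit in $(\star\eta)$ as the common axis of the cylindrical graph descriptions at all time-scales $\leq\tau_\eta$. Both conclusions will follow from Huisken's monotonicity combined with an explicit Gaussian computation on a cylinder, applied at the two basepoints $(y,t)$ and $(z,s_0)$ of a pair in $S\cap\cPB_r(y,t)$.

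For the time component of the Reifenberg bound, I would observe that whichever of $s_0-t$ or $t-s_0$ is nonnegative, say $\sigma=|s_0-t|$, the hypothesis $(\star\eta)$ applied at the later-time point at time-scale $\sigma$ forces the other (singular) point to lie outside the ball on which the flow is a smooth cylindrical graph. This yields $|z-y|\geq\eta^{-1}\sqrt{\sigma}$, i.e., $|t-s_0|^{1/2}\leq\eta\,|z-y|\leq\eta\,r$, handling $s_0<t$ and $s_0>t$ symmetrically (the case $s_0=t$ being trivial).

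For the spatial bound $\dist(z,V_{y,t})\leq f(\eta)\,r$, I would run Huisken's monotonicity backwards from $(z,s_0)$ at scale $\tau=|z-y|^2$:
\begin{align*}
   \Theta_j\;=\;\Theta_{z,s_0}\;\leq\;F_{z,\tau}(M_{s_0-\tau})\;=\;F_{z,\tau}(M_{t-s''})\, ,\qquad s'':=\,|z-y|^2-(s_0-t)\, .
\end{align*}
The time bound forces $s''/|z-y|^2\in[1-\eta^2,1+\eta^2]$, so $(\star\eta)$ at $(y,t)$ on time-scale $s''$ yields a cylindrical description on a ball around $y$ that contains the essential support of the Gaussian at $z$ of scale $\sqrt{\tau}=|z-y|$. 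Hence $|F_{z,\tau}(M_{t-s''})-F_{z,\tau}(\Sigma_{s''})|\leq C\eta$, where $\Sigma_{s''}$ is the shrinker cylinder about $V_{y,t}$ of radius $\sqrt{2(n-j)s''}$. A direct Gaussian integration on $\Sigma_{s''}$ shows that $F_{z,\tau}(\Sigma_{s''})$ depends only on the two dimensionless parameters $D/\sqrt{\tau}$ (with $D=\dist(z,V_{y,t})$) and $s''/\tau$, is strictly maximized at $(0,1)$ with value $\Theta_j$, and has nondegenerate Hessian there; thus the monotonicity inequality forces $D/|z-y|\leq C\sqrt{\eta}$. Combining, $\distp((z,s_0),V_{y,t})\leq\max(\eta,C\sqrt{\eta})\,r$, and by Corollary \ref{c:techmon} one can replace $\eta$ by a modulus $\bar\eta(r)\to 0$, giving the strong \emph{vanishing} Reifenberg property.

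For $f$-regularity, the same analysis applied at the pair $(y_1,t_1),(y_2,t_2)\in S$ at parabolic distance $r$ produces cylindrical approximations of the flow with axes $V_{y_1,t_1}$ and $V_{y_2,t_2}$ on an overlapping spatial ball of size comparable to $r$; since a cylinder in $\cC_j$ is determined up to $O(\eta r)$ by its cross-section on such a ball, the two axes satisfy $d_{\cP H}(\cPB_r\cap V_{y_1,t_1},\cPB_r\cap V_{y_2,t_2})\leq f(r)\,r$ with $f(r)\to 0$ inherited from the $\bar\eta(r)\to 0$ of Corollary \ref{c:techmon}. The main obstacle is the sharp quantitative maximum estimate $\Theta_j-F_{z,\tau}(\Sigma_s)\gtrsim(D/\sqrt{\tau})^2+(s/\tau-1)^2$ near the maximum, strong enough to absorb the $C\eta$ approximation error into the claimed $C\sqrt{\eta}$ spatial bound; once this gap is secured, the bookkeeping among the scales $\sqrt{\tau}$, $\sqrt{s''}$, and $r$ is a routine technicality.
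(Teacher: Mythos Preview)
Your argument is correct but takes a genuinely different route from the paper's, so a comparison is worthwhile.

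\textbf{Time bound.} The paper works at a common past time $t(z)-4r^2$ (with $t(y)\le t(z)$) and compares the \emph{radii} of the two cylindrical approximations: since $M_{t(z)-4r^2}$ is simultaneously a $C^1$-small graph over cylinders of radii $\sqrt{2(n-j)}\cdot 2r$ (from $z$) and $\sqrt{2(n-j)}\cdot\sqrt{4r^2+t(y)-t(z)}$ (from $y$), these radii must nearly agree, forcing $|t(y)-t(z)|\leq\gamma r^2$. Your argument is different and in a sense cleaner: the smooth cylindrical region about the later point at scale $\sigma$ cannot contain a density-$\Theta_j$ singularity, so $|y-z|\geq\eta^{-1}\sqrt{\sigma}$. (You should make explicit the step that a $C^1$ graph over a cylinder of \emph{positive} radius near $z$ for all $s$ near $\sigma$ forces $\Theta_{z,s_0}\leq 1$; this is an easy density computation but is not literally the statement ``smooth graph $\Rightarrow$ regular''.)

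\textbf{Spatial bound.} Here the approaches diverge most. The paper stays geometric: at $\bar t=t(y)-r^2/4$ the flow is a $C^1$-small graph over \emph{both} $\Psi_{y,r/2}(\cC_y)$ and $\Psi_{z,\rho}(\cC_z)$ on overlapping balls of radius $\sim\eta^{-1}r$, so the cylinders (and hence $z$ and $V_{y,t}$) are $O(\eta r)$-close. You instead use Huisken monotonicity at $(z,s_0)$ at scale $\tau=|z-y|^2$, approximate $F_{z,\tau}(M_{t-s''})$ by $F_{z,\tau}(\Sigma_{s''})$, and exploit that $F_{z,\tau}(\Sigma_s)$ has a nondegenerate maximum $\Theta_j$ at $(D,s/\tau)=(0,1)$. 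This is valid; the nondegeneracy is essentially the $F$-stability of the shrinking sphere from \cite{CM1} and is checkable by direct computation. Your route yields $D\leq C\sqrt{\eta}\,|z-y|$ rather than the paper's $D\leq C\eta\, r$; both suffice.

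\textbf{$f$-regularity.} Both arguments coincide: overlapping cylindrical descriptions at scale $r$ force the axes to be $O(\eta r)$-close.

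In summary, the paper's overlap argument is uniformly geometric and avoids the Hessian computation, while your monotonicity argument is more analytic and makes the role of the density explicit; each buys a slightly different quantitative dependence on $\eta$, but both close once their one technical ingredient (axis-from-cylinder vs.\ quadratic gap at the $F$-maximum) is secured.
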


\begin{proof}
We will show that $S$ has the strong parabolic Reifenberg property, where the constant depends on $\eta$ and goes to zero as $\eta$ does.  The vanishing claimed in (1)   as well as the $f$-regularity in (2) will then follow from Corollary \ref{c:techmon} which implies that $\eta$ goes to zero uniformly as we shrink the scale.
 
Given a point $y \in S$, let $\cC_y$ be the cylinder blowup at $y$ (which is unique by \cite{CM2}) and let $V_y$ be the $j$-plane through $y$ that is the axis of  $\cC_y$.   To get the Reifenberg property (1), we will show that   for  $r <  \sqrt{\tau}/2$ 
\begin{align}	\label{e:Ssubr}
	  \cPB_r(y) \cap S \subset \cPT_{\delta \, r}(V_y) \, ,
\end{align}
 where $\delta$ depends on $\eta$ and goes to zero as $\eta$ does.  We will divide \eqr{e:Ssubr} into two parts, where we first show it for the projection onto time and then for the projection onto space.

If $z$  is   in $\left( \cPB_r(y) \cap S \right) \setminus \{ y \}$, let $\cC_z$ and $V_z$ be the corresponding cylinder and $j$-plane through $z$, respectively. 
Without loss of generality, suppose that 
\begin{align}
	t(y) \leq t(z) \, .
\end{align}
 As long as $\eta > 0$ is small enough, it follows that the two spatial regions where the flow is cylindrical (one centered at $y$ and one at $z$)  overlap when $t= t(z) - 4r^2$.  
 Thus, the flow is close to two cylinders on the overlap, with the radius of each cylinder given in terms of the time to the singularities at $y$ and $z$, respectively.
 The   cylindrical structure about $z$ implies that the flow is a graph over a cylinder of radius  
 \begin{align}
 	\sqrt{2(n-j)}\, (2r) \, .
\end{align}
 On the other hand, the cylindrical structure about $y$ implies that the flow is a graph over a cylinder of radius 
\begin{align}
	\sqrt{2(n-j)}\, \sqrt{ 4r^2 + t(y) - t(z)} \, .
\end{align}
Comparing the two radii (and noting that $t(y) \leq t(z)$), we see that
\begin{align}	\label{e:etaty}
	  ( \sqrt{2(n-j)} + \eta) \, \sqrt{ 4r^2 + t(y) - t(z)}  \geq  (\sqrt{2(n-j)} - \eta)  \, (2r) \, .
\end{align}
In the limit as $\eta \to 0$, \eqr{e:etaty} would imply that $t(y)= t(z)$.
 Given $\gamma > 0$, then we can take $\eta$   small enough so  that \eqr{e:etaty} implies that
\begin{align}	\label{e:loconeHz}
 	 \gamma\, r^2 \geq    |t(y) - t(z)|  \, .
\end{align}
  This   shows that \eqr{e:Ssubr} holds for the projection onto time.

We now look at the flow at time $\bar{t} \equiv t(y) - \frac{r^2}{4}$.  It is convenient to set $\rho^2 =t(z) -  \bar{t}$.  Note that \eqr{e:loconeHz} guarantees that this makes sense for $\gamma$
small
 enough and, in fact, that
\begin{align}	\label{e:compat}
	\left| \rho^2 - \frac{r^2}{4} \right| = \left| t(z) - t(y) \right|  \leq\gamma \,  r^2  \, .
\end{align}
We will choose $\gamma$ small enough so that this implies that $\rho \in (r/4 , 3r/4)$.

Let $\Pi : \RR^{n+1} \times \RR \to \RR^{n+1}$ be   projection from space-time to space.
The $(j,\eta)$-cylindrical property at $y$ (with $s= \frac{r^2}{4}$) gives that
\begin{itemize}
	\item $\Psi_{y,\frac{2}{r}} \, \left( B_{\eta^{-1} \, \frac{r}{2} }(\Pi(y)) \cap M_{\bar{t}} \right)$ is a graph over $\cC_y$ of a function with $C^1$ norm at most $\eta$.
\end{itemize}
On the other hand, the $(j,\eta)$-cylindrical property at $z$ (with $s= \rho^2$) gives
\begin{itemize}
	\item $\Psi_{z,\frac{1}{\rho}} \, \left( B_{\eta^{-1} \rho}(\Pi(z)) \cap M_{\bar{t}} \right)$ is a graph over $\cC_z$ of a function with $C^1$ norm at most $\eta$.
\end{itemize}
We will choose  $\eta \in (0,1/4)$ so that $\eta^{-1} - 2 > \eta^{-1}/2$.  Thus,  since $\Pi(z) \in B_r (\Pi(y))$, we have
\begin{align}
	B_{\eta^{-1} \frac{r}{4}} (\Pi(z)) &\subset B_{(\eta^{-1} - 2) \frac{r}{2}} (\Pi(z)) \subset B_{\eta^{-1} \, \frac{r}{2} }(\Pi(y)) \, , \\
	B_{\eta^{-1} \frac{r}{4}} (\Pi(z)) & \subset B_{\eta^{-1} \, \rho} (\Pi(z)) \, .
\end{align}
In particular, we know that $B_{\eta^{-1} \frac{r}{4}} (\Pi(z)) \cap M_{\bar{t}}$ is a graph over both  $\Psi_{z,\rho}(\cC_z)$ and $\Psi_{y,\frac{r}{2}} (\cC_y)$.  
(We have now dilated the cylinders instead of $M_{\bar{t}}$.) As a consequence, 
we have
\begin{align}
	B_{\eta^{-1} \frac{r}{4}} (\Pi(z)) \cap \Psi_{z,\rho} (\cC_z) \subset  T_{\eta (\rho+\frac{r}{2})} (\Psi_{y,\frac{r}{2}} (\cC_y))  
	\, .
\end{align}
As long as $\eta > 0$ is small enough (depending on $\delta$), it follows that $\Pi(z)$ lies in the $(\delta r)$-tubular neighborhood of $V_y$.
 This completes the proof of the strong parabolic Reifenberg property.
This also shows  that the $j$-planes $V_y$ and $V_z$ must be close and, in fact, the distance between them goes to zero uniformly as the distance from $y$ to $z$ goes to zero; this shows the $f$-regularity.
\end{proof}

\subsection{The  strata are cylindrical}

The following proposition shows that the top strata $\cS_{n-1} \setminus \cS_{n-2}$ is always  $(n-1,\eta)$-cylindrical  on some  time-scale $\tau > 0$, with a similar statement for the lower strata.

\begin{Pro}	\label{p:uniapprox}
We have:
\begin{itemize}
\item
Given $\eta > 0$, there exists $\tau > 0$ (depending also on the flow $M_t$) so that
$M_t$ is 
$(n-1,\eta)$-cylindrical   on $\cS_{n-1} \setminus \cS_{n-2}$ on the time-scale $\tau$.
\item
Given $\eta > 0$, $j$, and $\epsilon > 0$, there exists $\tau > 0$ (depending also on the flow $M_t$) so that
$M_t$ is 
$(j,\eta)$-cylindrical   on $\left( \cS_j \setminus \cS_{j-1} \right) \setminus \cPT_{\epsilon} \left( \cS \setminus \cS_j \right)$ on the time-scale $\tau$.
\end{itemize}

\end{Pro}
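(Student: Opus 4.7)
The plan is a direct compactness argument that combines the local uniform cylindrical approximation from Corollary \ref{c:techmon} with the compactness of the relevant strata furnished by Lemma \ref{l:unicone}. In both bullets, the local corollary gives, at each point of the stratum, a space-time neighborhood on which a common time-scale $\tau_y$ works; the only task is to promote these local time-scales to a single global one, which is exactly what a finite-cover argument on a compact set accomplishes.

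For the first bullet, fix $\eta > 0$ and set $S = \cS_{n-1}\setminus\cS_{n-2}$; by Lemma \ref{l:unicone} this set is compact. For each $y \in S$, Corollary \ref{c:techmon} applied with $j = n-1$ yields positive constants $r_y$ and $\tau_y$ such that $M_t$ is uniformly $(n-1,\eta)$-cylindrical on $\cPB_{r_y}(y)\cap S$ on the time-scale $\tau_y$. The parabolic balls $\{\cPB_{r_y}(y)\}_{y\in S}$ form an open cover of the compact set $S$, so we may extract a finite subcover $\cPB_{r_{y_1}}(y_1),\dots,\cPB_{r_{y_N}}(y_N)$. Set $\tau = \min_{1 \leq i \leq N}\tau_{y_i}$. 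Any $z \in S$ lies in some $\cPB_{r_{y_i}}(y_i)\cap S$, so $M_t$ is $(n-1,\eta)$-cylindrical at $z$ on the time-scale $\tau_{y_i}$; since the defining condition of $(j,\eta)$-cylindrical on a time-scale requires the graphical property for every positive $s$ up to the time-scale, it is in particular satisfied on the smaller time-scale $\tau \leq \tau_{y_i}$. This gives the first bullet.

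For the second bullet, fix $\eta > 0$, $j \in \{0,\dots,n-2\}$, and $\epsilon > 0$, and put $S' = (\cS_j\setminus\cS_{j-1}) \setminus \cPT_\epsilon(\cS\setminus\cS_j)$; by Lemma \ref{l:unicone} the set $S'$ is compact. Every $y \in S'$ lies in the larger set $\cS_j\setminus\cS_{j-1}$, so Corollary \ref{c:techmon} produces $r_y, \tau_y > 0$ with $M_t$ uniformly $(j,\eta)$-cylindrical on $\cPB_{r_y}(y)\cap(\cS_j\setminus\cS_{j-1})$ on the time-scale $\tau_y$. Extracting a finite subcover of $S'$ by such parabolic balls and taking $\tau$ to be the minimum of the associated $\tau_{y_i}$'s, the same argument as above---together with the inclusion $S' \subset \cS_j\setminus\cS_{j-1}$---shows $M_t$ is $(j,\eta)$-cylindrical at every point of $S'$ on the time-scale $\tau$.

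Once Corollary \ref{c:techmon} and Lemma \ref{l:unicone} are in hand, the argument is essentially formal; there is no real technical obstacle. The one conceptual point worth emphasizing is the role of the removed set $\cPT_\epsilon(\cS\setminus\cS_j)$ in the second bullet: without it, the stratum $\cS_j\setminus\cS_{j-1}$ could fail to be closed, since singular points of $\cS_j\setminus\cS_{j-1}$ might accumulate at points of strictly higher density lying in $\cS\setminus\cS_j$, and no uniform time-scale would exist along such an accumulation sequence. Removing an $\epsilon$-parabolic tube around the higher strata restores compactness and is exactly the hypothesis needed for the finite-cover step to yield a single $\tau$.
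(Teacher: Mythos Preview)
Your proof is correct and follows essentially the same approach as the paper: invoke Corollary \ref{c:techmon} at each point of the stratum to obtain a local parabolic ball and time-scale on which the flow is uniformly $(j,\eta)$-cylindrical, use the compactness from Lemma \ref{l:unicone} to extract a finite subcover, and take $\tau$ to be the minimum of the finitely many time-scales. Your explicit observation that the $(j,\eta)$-cylindrical condition on a time-scale $\tau_{y_i}$ automatically persists on any smaller time-scale $\tau \leq \tau_{y_i}$ is exactly the monotonicity needed to make the finite-cover step work, and your closing remark on why the $\epsilon$-tube must be removed in the second bullet is a welcome clarification.
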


\begin{proof} 
Let $\epsilon > 0$ (depending on $\eta$) be given by Corollary \ref{c:techmon}.

Given any point $(x_0, t_0) \in \cS_{n-1} \setminus \cS_{n-2}$, there must exist some $\tau_0 > 0$ so that 
\begin{align}
	F_{x_0 , t_0 + 2\tau_0} (M_{t_0+2\tau_0}) \leq \Theta_{x_0 , t_0} + \frac{\epsilon}{2} \, .
\end{align}
In particular,   Corollary \ref{c:techmon} gives $r_0 > 0$ so that each point in 
\begin{align}
	\cPB_{r_0} (x_0 , t_0) \cap \cS_{n-1} \setminus \cS_{n-2}
\end{align}
 is $(n-1,\eta)$-cylindrical at time-scale $\tau_0$.

Since the set $\cS_{n-1} \setminus \cS_{n-2}$ is compact by Lemma \ref{l:unicone}, it follows that it can be covered by a finite collection of balls $\cPB_{r_i} (x_i , t_i)$ where $\cS_{n-1} \setminus \cS_{n-2}$ is $(n-1,\eta)$-cylindrical on time-scale $\tau_i > 0$.  The first claim follows with $\tau = \min_i \tau_i$.

The second claim follows similarly.
\end{proof}

\vskip2mm
We conclude that $\cS_{n-1} \setminus \cS_{n-2}$ has a strong Reifenberg property, with a similar statement for the lower strata:

\begin{Cor}	\label{c:pr}
(1) and (2) in Proposition \ref{t:prA} hold for both
\begin{itemize}
\item $S = \cS_{n-1} \setminus \cS_{n-2}$ with $j=(n-1)$.  
\item   $S = \left( \cS_k \setminus \cS_{k-1} \right) \setminus \cPT_{\epsilon} \left( \cS \setminus \cS_k \right)$
for  each $\epsilon > 0$ with  $j=k$.
\end{itemize}
\end{Cor}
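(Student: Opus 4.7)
The plan is to directly combine the two main results already established in this section: Proposition \ref{p:uniapprox}, which gives the uniform $(j,\eta)$-cylindrical approximation on each stratum (away from higher strata), and Proposition \ref{t:prA}, which translates such a uniform cylindrical approximation into the strong parabolic vanishing Reifenberg property together with $f$-regularity of the associated distribution of $j$-planes.

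For the first bullet, I would fix $j = n-1$ and take $S = \cS_{n-1} \setminus \cS_{n-2}$. The first part of Proposition \ref{p:uniapprox} says that for every $\eta > 0$ there is $\tau = \tau_\eta > 0$ such that $M_t$ is $(n-1,\eta)$-cylindrical on $S$ on the time-scale $\tau_\eta$. This is exactly the hypothesis $(\star \eta)$ required by Proposition \ref{t:prA} with $j = n-1$. Applying Proposition \ref{t:prA} then gives conclusions (1) and (2) for this $S$.

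For the second bullet, fix $k \in \{0,\dots,n-2\}$ and $\epsilon > 0$ and set $S = \left( \cS_k \setminus \cS_{k-1} \right) \setminus \cPT_{\epsilon} \left( \cS \setminus \cS_k \right)$ with $j=k$. The second part of Proposition \ref{p:uniapprox} provides, for each $\eta > 0$, a $\tau_\eta > 0$ such that $M_t$ is $(k,\eta)$-cylindrical on $S$ on the time-scale $\tau_\eta$, which is again exactly hypothesis $(\star \eta)$ in Proposition \ref{t:prA} (now with $j=k$). Applying Proposition \ref{t:prA} once more gives both the strong parabolic vanishing $k$-dimensional Reifenberg property of $S$ and the $f$-regularity of the distribution $\{V_{y,t}\}_{(y,t)\in S}$.

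There is no real obstacle here: the corollary is a bookkeeping assembly of the two preceding results. The only point worth noting is that it is essential in the second bullet that we stay a definite parabolic distance $\epsilon$ away from $\cS \setminus \cS_k$, because this is what makes the relevant stratum compact (by Lemma \ref{l:unicone}) and hence allows the time-scale $\tau_\eta$ in Proposition \ref{p:uniapprox} to be chosen uniformly over $S$; without this, the blowup scale at a point of $\cS_k \setminus \cS_{k-1}$ could degenerate as one approaches a higher-stratum singularity, and $(\star \eta)$ could fail.
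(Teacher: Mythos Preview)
Your proof is correct and follows exactly the paper's approach: combine Proposition \ref{p:uniapprox} (which verifies hypothesis $(\star\eta)$) with Proposition \ref{t:prA}. Your additional remark explaining why the $\epsilon$-tube is needed in the second bullet is accurate and helpful, but not required for the formal proof.
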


\begin{proof}
This follows by combining Proposition \ref{p:uniapprox} (which gives that the sets are cylindrical) and  
Proposition \ref{t:prA} (which gives that (1) and (2) hold for cylindrical sets).
\end{proof}

\subsection{The structure of the singular set}

 The next theorem records the   properties of the singular set $\cS$ in detail.  
 
 \begin{Thm}	\label{t:detail}
 Let $M_t \subset \RR^{n+1}$ be a MCF  with only cylindrical singularities starting at a closed smooth embedded hypersurface.  The top strata $\cS_{n-1} \setminus \cS_{n-2}$
 satisfies:
 \begin{itemize}
 \item It is contained in finitely many $(n-1)$-dimensional Lipschitz submanifolds  and, thus, has finite $\cPH_{n-1}$ measure.
 \item It  has the strong parabolic $(n-1)$-dimensional vanishing Reifenberg property.
 \item  It is locally the graph of a $2$-H\"older function on space.
 \end{itemize}
Moreover, $ \cS_{n-2} $ has dimension at most $n-2$ and,
 for each $k \leq  n-2$, the set $\cS_k \setminus   \cS_{k-1}$ can be written as the countable union  $\cup_{i=1}^{\infty} \, S_{k,i}$ where each $S_{k,i}$ satisfies:
 \begin{itemize}
\item $S_{k,i}$ is contained in finitely many $k$-dimensional Lipschitz submanifolds.
\item $S_{k,i}$ has the strong parabolic $k$-dimensional vanishing Reifenberg property.
\item $S_{k,i}$ is locally the graph of a $2$-H\"older function on space.
\end{itemize}

 \end{Thm}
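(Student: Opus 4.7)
The plan is to assemble the theorem directly from the tools already developed in the paper, using Corollary \ref{c:pr} (strong parabolic Reifenberg for stratified pieces), Theorem \ref{t:Lipgraph} (Reifenberg plus $f$-regular distribution implies bi-Lipschitz graph), and the compactness statements of Lemma \ref{l:unicone}. The argument splits naturally into handling the top stratum and the lower strata; no new estimates are needed beyond what those statements provide.

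For the top stratum $\cS_{n-1}\setminus\cS_{n-2}$, I would first invoke Lemma \ref{l:unicone} to get compactness. Corollary \ref{c:pr} with $S=\cS_{n-1}\setminus\cS_{n-2}$ and $j=n-1$ then supplies both the strong parabolic $(n-1)$-dimensional vanishing Reifenberg property and the $f$-regularity of the distribution of axis planes $\{V_{y,t}\}$. Applying Theorem \ref{t:Lipgraph} at each point produces a space-time neighborhood on which $\cS_{n-1}\setminus\cS_{n-2}$ is the bi-Lipschitz image of its orthogonal projection to $V_{y,t}$; since each $V_{y,t}\subset\RR^{n+1}\times\{t\}$ sits in a time slice, the projection onto space is injective on this neighborhood and the resulting set is the graph over a subset of space of a function whose space-time image is Lipschitz in the parabolic metric, i.e.\ a $2$-H\"older function in the spatial variable (as explained in the discussion following Theorem \ref{t:main2}). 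Compactness upgrades the open cover of such graph neighborhoods to a finite subcover, producing the finitely many $(n-1)$-dimensional Lipschitz submanifolds, and the finiteness of $\cPH_{n-1}$ measure follows because each such graph has bounded $\cPH_{n-1}$-mass.

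For the lower strata, I would write $\cS_k\setminus\cS_{k-1}=\bigcup_{i=1}^{\infty} S_{k,i}$ where
\begin{equation*}
S_{k,i}=\bigl(\cS_k\setminus\cS_{k-1}\bigr)\setminus\cPT_{1/i}\bigl(\cS\setminus\cS_k\bigr).
\end{equation*}
By Lemma \ref{l:unicone} each $S_{k,i}$ is compact, and Corollary \ref{c:pr} gives both the strong parabolic $k$-dimensional vanishing Reifenberg property and an $f$-regular distribution of $k$-planes on $S_{k,i}$. Repeating the Theorem \ref{t:Lipgraph} argument verbatim shows each $S_{k,i}$ is locally a bi-Lipschitz graph over a $k$-plane, hence locally the graph of a $2$-H\"older function on space; compactness reduces the cover to finitely many $k$-dimensional Lipschitz submanifolds. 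Finally, since each stratum $\cS_k\setminus\cS_{k-1}$ is a countable union of sets contained in finitely many $k$-dimensional Lipschitz submanifolds, it has parabolic Hausdorff dimension at most $k$, and therefore
\begin{equation*}
\cS_{n-2}=\bigcup_{k=0}^{n-2}\bigl(\cS_k\setminus\cS_{k-1}\bigr)
\end{equation*}
has parabolic Hausdorff dimension at most $n-2$.

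The only step requiring genuine care is the passage from the local Reifenberg-plus-regularity data to a global finite cover on the top stratum: one must check that the ``bi-Lipschitz'' neighborhoods of Theorem \ref{t:Lipgraph} are open in $\cS_{n-1}\setminus\cS_{n-2}$ (so that compactness yields a finite subcover) and that the Lipschitz constants from Theorem \ref{t:Lipgraph} remain uniformly controlled as the center point varies, so that the total $\cPH_{n-1}$ measure is finite. Both follow from the uniform $(j,\eta)$-cylindrical property of Proposition \ref{p:uniapprox}, which provides a single time-scale $\tau$ on the compact stratum and hence uniform constants in the Reifenberg and $f$-regularity statements. This uniformity is the main---and essentially the only nontrivial---obstacle; everything else is bookkeeping on top of the results already proved.
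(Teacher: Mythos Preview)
Your proposal is correct and follows essentially the same route as the paper: invoke Corollary \ref{c:pr} to obtain the strong vanishing Reifenberg property and $f$-regularity, then apply Theorem \ref{t:Lipgraph} for the graph structure, and for the lower strata exhaust $\cS_k\setminus\cS_{k-1}$ by the compact pieces $S_{k,i}=(\cS_k\setminus\cS_{k-1})\setminus\cPT_{\epsilon_i}(\cS\setminus\cS_k)$ (the paper takes $\epsilon_i=2^{-i}$ rather than $1/i$, an immaterial difference). Your write-up simply makes explicit the compactness and uniformity arguments that the paper leaves implicit in its very terse proof.
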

 
 \begin{proof}
 By  Corollary \ref{c:pr}, the properties (1) and (2)   in Proposition \ref{t:prA} hold for  $ \cS_{n-1} \setminus \cS_{n-2}$ and $j=(n-1)$.  This gives the second claim for $ \cS_{n-1} \setminus \cS_{n-2}$.  The first and third  claims then follow  from Theorem \ref{t:Lipgraph}. % and Corollary \ref{c:Lipgraph}.

The properties of the lower strata follow similarly by applying the second claim in Corollary \ref{c:pr} with $\epsilon = 2^{-i}$ and letting $i \to \infty$.
  \end{proof}
  
  \vskip2mm
 Theorem \ref{t:detail} proves a strong form of rectifiability of the top strata and countable rectifiability for each of the lower strata (at no point does one need to disregard a set of measure zero as is usually done in the definition of rectifiability).

 \begin{proof}[Proof of Theorem \ref{t:main}]
 This follows from Theorem \ref{t:detail}.
 \end{proof}

 \subsection{Proof of Theorem \ref{t:main2}}

   The $k$-dimensional {\emph{parabolic Hausdorff measure}} $\cPH_k$ 
of a set $S \subset \RR^{n+1} \times \RR$
is the $k$-dimensional Hausdorff measure with respect to the parabolic metric.  When $S$ is contained in a time-slice, this agrees
with the usual  $k$-dimensional Hausdorff measure.  In contrast, the
time axis has parabolic Hausdorff dimension two.

\vskip2mm
The next elementary lemma relates the parabolic Hausdorff measure (as a subset of space-time) of a graph of a $2$-H\"older function  to the Euclidean Hausdorff measure of its projection to space.

\begin{Lem}	\label{l:sholder}
Suppose that $\Omega \subset \RR^{n+1}$ and $u: \Omega \to \RR$ is $2$-H\"older continuous.  Then for every $k$, there exists a constant $C$ depending on $k$ and the H\"older constant so that 
 \begin{align}
 	\cH_k(\Omega)\leq \cPH_{k} ( {\text{Graph}}_u) \leq C\, \cH_k (\Omega) \, .
 \end{align}
\end{Lem}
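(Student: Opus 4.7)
The plan is to show that the natural projection $\pi: \text{Graph}_u \to \Omega$, $\pi(x, u(x)) = x$, is a bi-Lipschitz equivalence between $(\text{Graph}_u, \distp)$ and $(\Omega, |\cdot|)$, and then invoke the standard fact that an $L$-Lipschitz map scales $k$-dimensional Hausdorff measure by at most $L^k$.

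First I would observe that for $x,y \in \Omega$,
\begin{align}
\distp\bigl( (x,u(x)), (y,u(y)) \bigr) = \max\{ |x-y| , |u(x)-u(y)|^{1/2} \} \, .
\end{align}
The inequality $|x-y| \leq \distp( (x,u(x)), (y,u(y)) )$ is immediate from the definition, so $\pi$ is $1$-Lipschitz from $(\text{Graph}_u, \distp)$ onto $\Omega$. By the general Hausdorff measure inequality for Lipschitz maps, this gives
\begin{align}
\cH_k(\Omega) = \cH_k(\pi(\text{Graph}_u)) \leq \cPH_k(\text{Graph}_u) \, ,
\end{align}
which is the left-hand inequality.

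For the right-hand inequality, I would use the $2$-Hölder hypothesis: there exists $C_0$ so that $|u(x)-u(y)| \leq C_0\, |x-y|^2$ for all $x,y \in \Omega$, hence $|u(x)-u(y)|^{1/2} \leq \sqrt{C_0}\, |x-y|$. Setting $L = \max\{ 1, \sqrt{C_0} \}$, this yields
\begin{align}
\distp\bigl( (x,u(x)), (y,u(y)) \bigr) \leq L\, |x-y| \, ,
\end{align}
so the inverse map $\pi^{-1}: x \mapsto (x,u(x))$ is $L$-Lipschitz from $(\Omega, |\cdot|)$ into $(\text{Graph}_u, \distp)$. Applying the Hausdorff scaling fact again gives
\begin{align}
\cPH_k(\text{Graph}_u) = \cPH_k(\pi^{-1}(\Omega)) \leq L^k\, \cH_k(\Omega) = C\, \cH_k(\Omega) \, ,
\end{align}
with $C = L^k$ depending only on $k$ and the $2$-Hölder constant. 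There is really no obstacle here; the only subtle point is simply to keep straight which metric is on which side of each map and to use that the maximum in the definition of $\distp$ makes the spatial projection automatically non-expanding.
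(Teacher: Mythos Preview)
Your proof is correct and takes essentially the same approach as the paper: the paper simply observes that the $2$-H\"older condition makes $x\mapsto(x,u(x))$ Lipschitz from $(\Omega,|\cdot|)$ to $({\text{Graph}}_u,\distp)$ and says ``the claim now follows.'' You have unpacked this by exhibiting both directions of the bi-Lipschitz equivalence explicitly and applying the standard $L^k$ scaling of Hausdorff measure under Lipschitz maps.
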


\begin{proof}
Since $u$ is $2$-H\"older, the   map $x \to (x,u(x))$ is Lipschitz with respect to the Euclidean distance on the domain and parabolic distance in the target.  The claim now follows.
\end{proof}

We will say that  a function $u$ on $\Omega \subset \RR^{n+1}$ is {\emph{$2$-H\"older with vanishing constant}} if there is a continuous increasing function $\gamma: [0,\infty) \to [0, \infty)$ with $\gamma (0) = 0$ so that
 \begin{align}	
 	|u(x) - u(y)| \leq \gamma (\epsilon) \, |x-y|^2  {\text{ if }} x,y \in \Omega {\text{ and }} |x-y| \leq \epsilon \, .
\end{align}
In particular, the graph functions in Theorem \ref{t:detail} are automatically  $2$-H\"older with vanishing constant because the graphs
 satisfy the vanishing Reifenberg property.

The next lemma gives a condition which ensures that a  graph is contained in a time-slice, in contrast to the example in Figure \ref{f:2graph}.

\begin{Lem}  \label{l:timeslice}
If $S \subset \RR^{n+1}\times \RR$ is   the graph of a $2$-H\"older   function $u$ with vanishing constant on a subset $\Omega \subset \RR^{n+1}$  with   $\cH_2 (\Omega) < \infty$, then:
\begin{itemize}
\item $\cH_1(t(S))=0$,
\item   if $S$ is connected, then it is contained in a time-slice.
\end{itemize}
\end{Lem}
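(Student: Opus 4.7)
The plan is to prove both bullets by a direct covering argument exploiting the \emph{vanishing} 2-Hölder condition, followed by a soft topological observation for the connectedness part.

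For the first bullet, I would fix $\delta > 0$ and choose a small $\epsilon > 0$ (to be determined in terms of $\delta$). Since $\cH_2(\Omega) < \infty$, the definition of Hausdorff measure gives a cover $\{U_i\}$ of $\Omega$ by sets of Euclidean diameter $\diam U_i \leq \epsilon$ with
\begin{equation*}
\sum_i (\diam U_i)^2 \leq \cH_2(\Omega) + 1.
\end{equation*}
By the vanishing $2$-Hölder hypothesis with modulus $\gamma$,
\begin{equation*}
\diam \bigl( u(U_i) \bigr) \leq \gamma(\epsilon) \, (\diam U_i)^2,
\end{equation*}
so $\{u(U_i)\}$ is a cover of $t(S) = u(\Omega)$ by subsets of $\RR$ of diameter at most $\gamma(\epsilon) \, \epsilon^2$ (which tends to $0$ with $\epsilon$), and
\begin{equation*}
\sum_i \diam\bigl(u(U_i)\bigr) \leq \gamma(\epsilon) \, (\cH_2(\Omega) + 1).
\end{equation*}
Choosing $\epsilon$ small enough that $\gamma(\epsilon) \, (\cH_2(\Omega) + 1) < \delta$ forces the $\cH_1$-premeasure of $t(S)$ at this scale to be less than $\delta$. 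Letting $\delta \to 0$ gives $\cH_1(t(S)) = 0$.

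For the second bullet, the time projection $\pi_t : \RR^{n+1} \times \RR \to \RR$ is continuous, so if $S$ is connected then $t(S) = \pi_t(S)$ is a connected subset of $\RR$, i.e., an interval (possibly degenerate). But every nondegenerate interval has positive $\cH_1$ measure, so by the first bullet $t(S)$ must be a single point, meaning $S$ lies in one time-slice.

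I do not expect any real obstacle; the only point worth checking carefully is that the vanishing constant is essential (a merely $2$-Hölder function would only give that $\cH_1(t(S))$ is bounded by a multiple of $\cH_2(\Omega)$, which would not force it to vanish, exactly as in the example of Figure~\ref{f:2graph}). The covering bound above is standard and the topological step is immediate once $\cH_1(t(S)) = 0$ is established.
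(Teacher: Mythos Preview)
Your argument is correct and is essentially the same as the paper's: both cover $\Omega$ at scale $\epsilon$, use the vanishing $2$-H\"older modulus to bound $\sum_i \diam u(U_i)$ by $\gamma(\epsilon)(\cH_2(\Omega)+C)$, and send $\epsilon\to 0$; the connectedness step is identical. The only cosmetic differences are that the paper covers by balls centered on $\Omega$ and writes $\cH_2(\Omega)+\epsilon$ rather than $\cH_2(\Omega)+1$.
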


\begin{proof}
Given $\epsilon>0$, we can cover $\Omega$ by balls $B_{r_i}(x_i)$ with $0<r_i\leq \epsilon$, $x_i \in \Omega$, and 
\begin{align}
\sum_i r_i^2\leq \cH_2(\Omega)+\epsilon\, .
\end{align}
Moreover, $|u(y)-u(x_i)|\leq \gamma (\epsilon)\,r_i^2$ for all $y\in B_{r_i}(x_i) \cap \Omega$ and, hence,
\begin{align}
\cH_1(t(S))\leq \sum_i\gamma (\epsilon)\,r_i^2\leq \gamma (\epsilon)\,\left(\cH_2(\Omega)+\epsilon\right)\, .
\end{align}
Letting $\epsilon\to 0$ gives the first claim.   The second claim follows from the first since $t(S)$ is connected if $S$ is.  
\end{proof}

The same argument gives that if $S\subset \RR^{n+1}\times \RR$ is a $2$-H\"older graph with vanishing constant
 and $\cPH_1(S)<\infty$, then $\cPH_1(t(S)) = \cH_{ \frac{1}{2} } (t(S))=0$.

  \begin{proof}[Proof of Theorem \ref{t:main2}]
(A) follows from Theorem \ref{t:detail}.
  
 To see (B), let $S_{k,i}$ be as in Theorem \ref{t:detail}.  Each intersection $S\cap S_{k,i}$ has  finite $\cPH_2$ measure and is a $2$-H\"older graph with vanishing constant.  Therefore, by Lemma \ref{l:timeslice},  
 \begin{align}
 	\cH_1(t(S\cap S_{k,i}))=0 {\text{ for each $k$ and $i$}}.  
\end{align}
Similarly, we have that
 $\cH_1 ( t(S \setminus \cS_{n-2})) = 0$.
  Since $S\subset (\cS \setminus \cS_{n-2})\cup_{k,i}S_{k,i}$ where the union is taken over countably many sets, it follows that $\cH_1(t(S)) =0$.  Furthermore, if   $S$ is also connected,  then
  so is $t(S)$ and  $S$ must be contained in a time-slice.  
 \end{proof}

  \begin{proof}[Proof of Corollary \ref{c:main2}]
  By Theorem \ref{t:detail}, the singular set $\cS$ has finite $\cPH_2$ measure when $n=2$ or $n=3$.  The corollary now follows from the 
second part  of Theorem \ref{t:main2}.
 \end{proof}

 \section{Local cone property}		\label{s:loccone}

The results of the rest of the paper are not used in the proofs of any of the results stated in the introduction.  

The proof of the rectifiability theorem (Theorem \ref{t:main}) very strongly used the 
uniqueness of tangent flows.
In this section, we will give  weaker criteria that are sufficient for Theorem \ref{t:main2} and do not require the full strength of uniqueness.  Moreover, these criteria are well-suited for other parabolic problems where uniqueness is not known.

We begin by introducing a  scaling condition for subsets of space-time that is natural in parabolic problems such as the heat equation or MCF.  
This    automatically holds for the singular set of a MCF with cylindrical singularities,   
 but is more general.  Moreover,   it  immediately implies that nearby singularities happen at essentially  the same time, as in the examples of shrinking cylinders and tori of revolution.    This condition has two equivalent forms:  The forward and  backward parabolic cone properties.     
   
\subsection{The parabolic cone property}

One way of characterizing 
a Euclidean cone is  that it is invariant under  scaling  $x \to \lambda x$.  In parabolic problems, like the heat equation or MCF, the natural scalings of space-time 
$\RR^{n+1} \times \RR$ are  parabolic dilations about the origin
\begin{align}	\label{e:paracone}
	(x,t) \to (\lambda x , \lambda^2 t) \, .
\end{align}
A parabolic cone  is a subset of space-time  that is invariant under parabolic dilations   (or under parabolic dilations about another point).  For example, the set
$\{ |x|^2 = |t| \}$ is a parabolic double cone; it is   a double paraboloid with the two paraboloids tangent to each other.

To define the parabolic cones that we will use here, 
given a point $y \in \RR^{n+1} \times \RR$, let $\Pi (y)$ be the projection to $\RR^{n+1}$ and $t(y)$   the projection onto the time axis.
Let $\bC_{ \gamma}(z) \subset \RR^{n+1}\times \RR$ be the  parabolic   cone
 centered at $z \in \RR^{n+1}\times \RR$ defined by (see Figure \ref{f:cone})
\begin{align}
	\bC_{\gamma }(z) = \{ y \in \RR^{n+1} \times \RR \, | \, \gamma\,\left| \Pi (y) - \Pi (z) \right|^2 \geq \left| t(y) - t(z) \right|
	    	\} \, .
\end{align}
Thus  $\bC_{\gamma }(z) $ is the region between   two tangent paraboloids and the constant $\gamma$ measures the ``angle of the parabolic cone''.
As $\gamma$ goes to $0$, the region collapses to a time-slice.

  \begin{figure}[htbp]		
\centering\includegraphics[totalheight=.4\textheight, width=.8\textwidth]{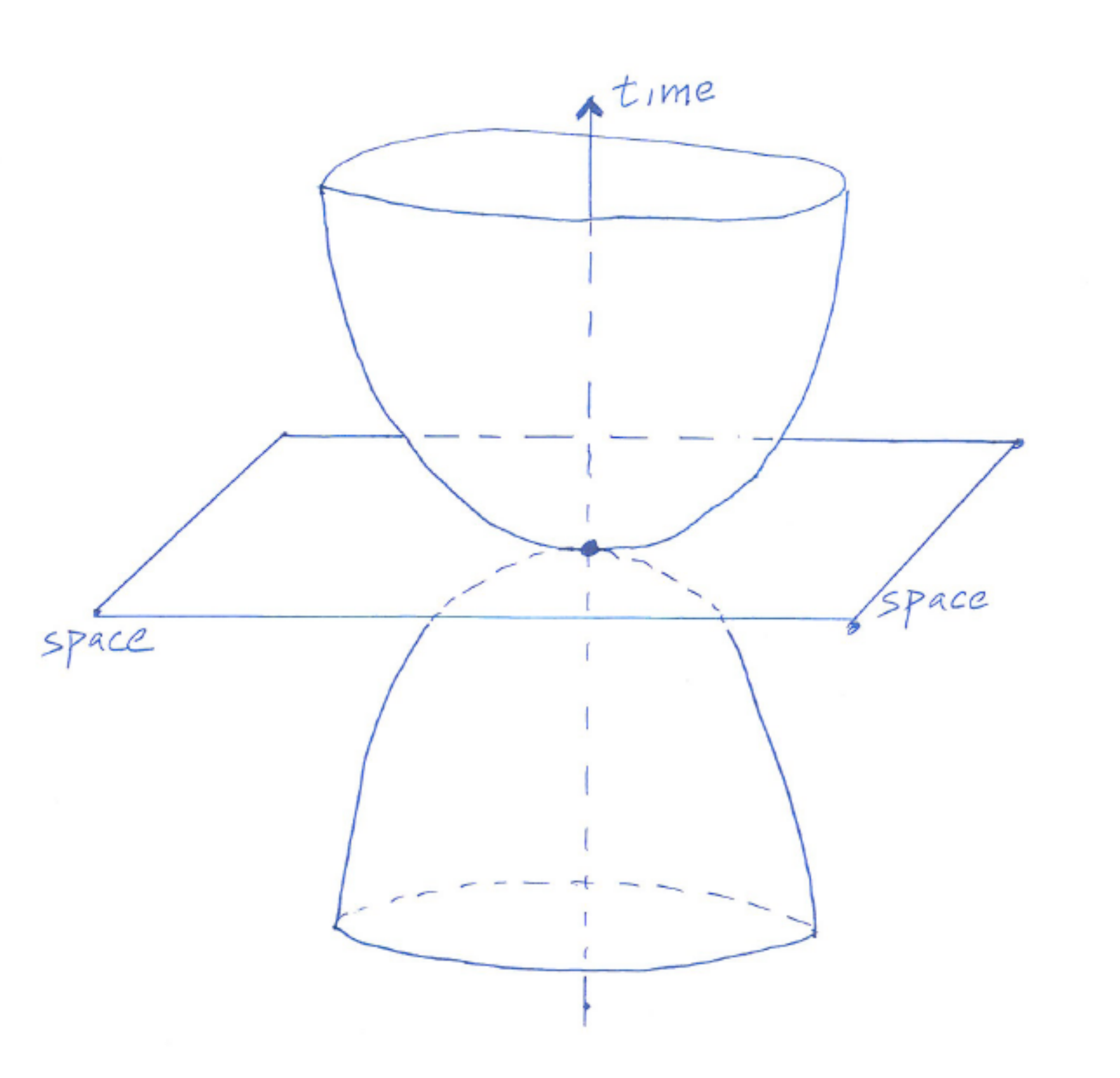}
\caption{The parabolic   cone $C_{\gamma}(0)$.}
%  It also illustrates that uniqueness of blowups is closely related to rectifiability.}  
 \label{f:cone}
  \end{figure}

 A set  satisfies
the parabolic cone property at a point if it sits between these two tangent paraboloids that make up the parabolic cone.   We say that
  a set $S$ has the $\gamma$-{\it local parabolic cone property}\footnote{Cf. lemma ${\text{I}}.1.2$ in \cite{CM3} and section ${\text{III}}.2$ in
\cite{CM4}.} if  there exists $r_0 > 0$ so that
\begin{align}	\label{e:localcone}
	  \cPB_{r_0}(z)  \cap S   \subset \bC_{\gamma}(z) {\text{ for all }} z \in S  \, ;
\end{align}
see Figure \ref{f:cone2}.
We say that $S$ has the {\emph{vanishing local parabolic cone property}} if $\gamma=\gamma (r_0)\to 0$ as $r_0\to 0$.

  \begin{figure}[htbp]		
\centering\includegraphics[totalheight=.4\textheight, width=.8\textwidth]{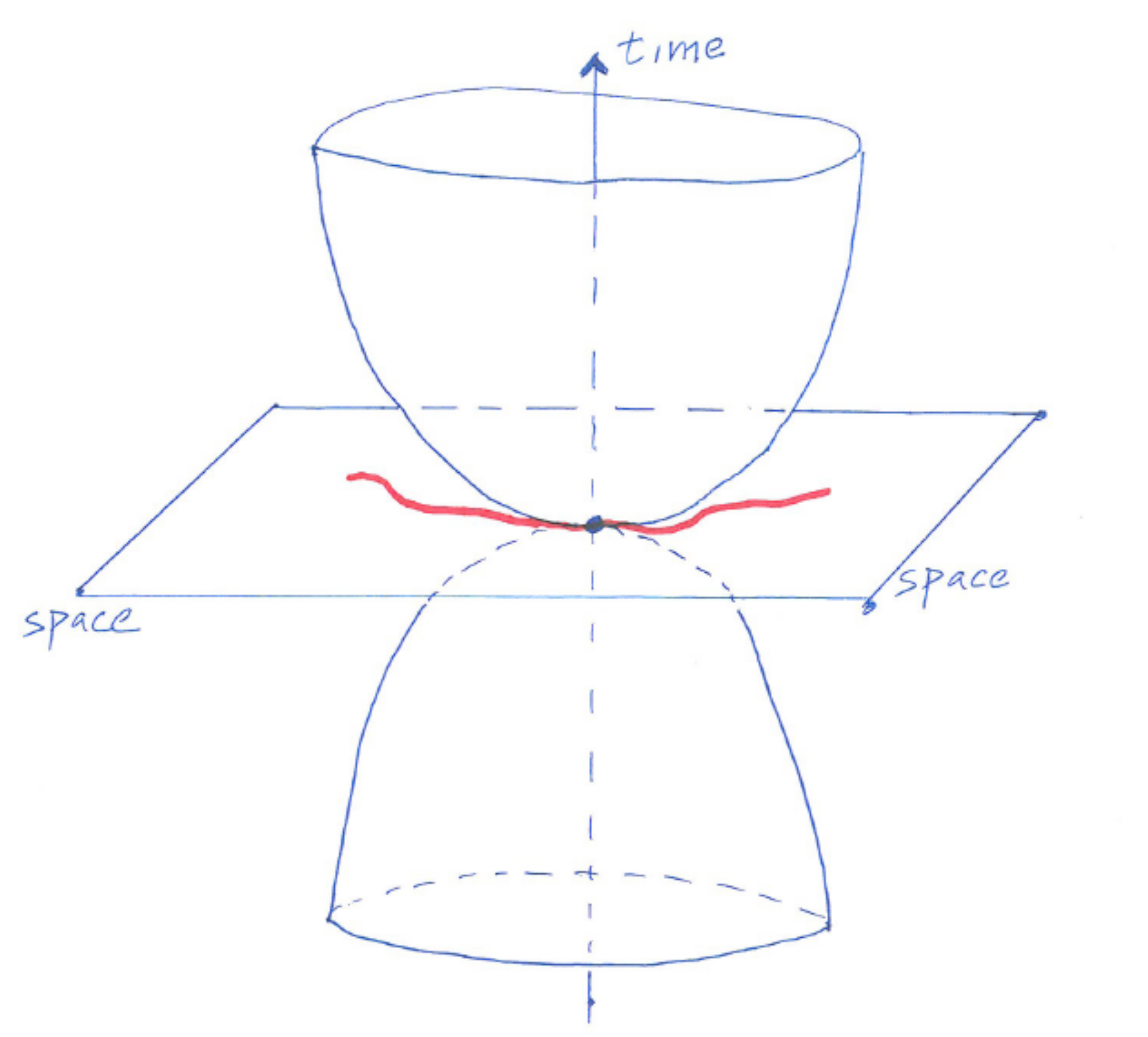}
\caption{The red set lies in the parabolic   cone $C_{\gamma}(0)$.}
%  It also illustrates that uniqueness of blowups is closely related to rectifiability.}  
 \label{f:cone2}
  \end{figure}

We observe next that if a set satisfies a half-cone property, then it automatically satisfies the full-cone property (we state this for the forward half-cone; 
the same is true for the backward half-cone); see Figure \ref{f:conehalf}.

\begin{Lem}	\label{l:halfcone}
If  there exists $r_0 > 0$ so that
\begin{align}	\label{e:localconeHALF}
	  \cP B_{r_0}(z)  \cap S \cap \{ y\, | \, t(y) > t(z) \}  \subset \bC_{\gamma}(z)  {\text{ for all }} z \in S\, ,
\end{align}
then $S$ has the $\gamma$-{\it local parabolic cone property}.
\end{Lem}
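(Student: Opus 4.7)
The plan is to exploit the symmetry of the parabolic distance together with the symmetry of the defining inequality for $\bC_{\gamma}(z)$. The key point is that $y \in \bC_{\gamma}(z)$ is equivalent to $z \in \bC_{\gamma}(y)$, since the inequality $\gamma\,|\Pi(y)-\Pi(z)|^2 \geq |t(y)-t(z)|$ is unchanged under swapping $y$ and $z$. Likewise, $\distp(y,z) = \distp(z,y)$, so $y \in \cPB_{r_0}(z)$ if and only if $z \in \cPB_{r_0}(y)$.

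Given this, I would argue as follows. Fix any $z \in S$ and any $y \in \cPB_{r_0}(z) \cap S$; the goal is to show $y \in \bC_{\gamma}(z)$. Split into three cases according to the sign of $t(y) - t(z)$. If $t(y) > t(z)$, then $y$ lies in the set $\cPB_{r_0}(z) \cap S \cap \{t(\cdot) > t(z)\}$, so the hypothesis \eqref{e:localconeHALF} applied at the point $z$ gives $y \in \bC_{\gamma}(z)$ immediately. If $t(y) = t(z)$, then the defining inequality of $\bC_{\gamma}(z)$ reduces to $\gamma\,|\Pi(y)-\Pi(z)|^2 \geq 0$, which holds trivially.

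The only remaining case is $t(y) < t(z)$. Here the idea is to reverse the roles of $y$ and $z$. Since the parabolic distance is symmetric, $z \in \cPB_{r_0}(y)$, and because $t(z) > t(y)$, the point $z$ lies in $\cPB_{r_0}(y) \cap S \cap \{t(\cdot) > t(y)\}$. Applying the hypothesis \eqref{e:localconeHALF} at the point $y \in S$ then yields $z \in \bC_{\gamma}(y)$, i.e.\ $\gamma\,|\Pi(z)-\Pi(y)|^2 \geq |t(z)-t(y)|$. But this is literally the statement $y \in \bC_{\gamma}(z)$ by the symmetry of the inequality.

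There is no real obstacle here; the lemma is essentially a bookkeeping observation, and the whole content is that \eqref{e:localconeHALF} is stated only for the forward cone but the cone $\bC_{\gamma}(z)$ and the ball $\cPB_{r_0}(z)$ are both symmetric in the time variable, so applying the hypothesis at the earlier of the two points handles the backward direction as well. The same $r_0$ and $\gamma$ work, so no constants need to be adjusted.
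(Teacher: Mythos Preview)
Your proof is correct and follows exactly the same approach as the paper's own proof: split according to which of $t(y)$, $t(z)$ is larger and apply the hypothesis \eqref{e:localconeHALF} at the point with the earlier time, using the symmetry of both the parabolic ball and the cone condition. Your write-up is in fact slightly more careful than the paper's, since you make the symmetry of $\cPB_{r_0}$ and the trivial case $t(y)=t(z)$ explicit.
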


\begin{proof}
Suppose that  $y$ and $z$ are  points in $S$ with $t(y) \ne t(z)$.  We must show that
\begin{align}	\label{e:loconeH}
 	\gamma\,\left| \Pi (y) - \Pi (z) \right|^2\geq |t(y) - t(z)| \, .
\end{align}
If $t(y) > t(z)$, then this follows from \eqr{e:localconeHALF} at $z$.  If $t(y) < t(z)$, then \eqr{e:loconeH} follows from
 \eqr{e:localconeHALF} at $y$.

\end{proof}

  \begin{figure}[htbp]		
\centering\includegraphics[totalheight=.3\textheight, width=.8\textwidth]{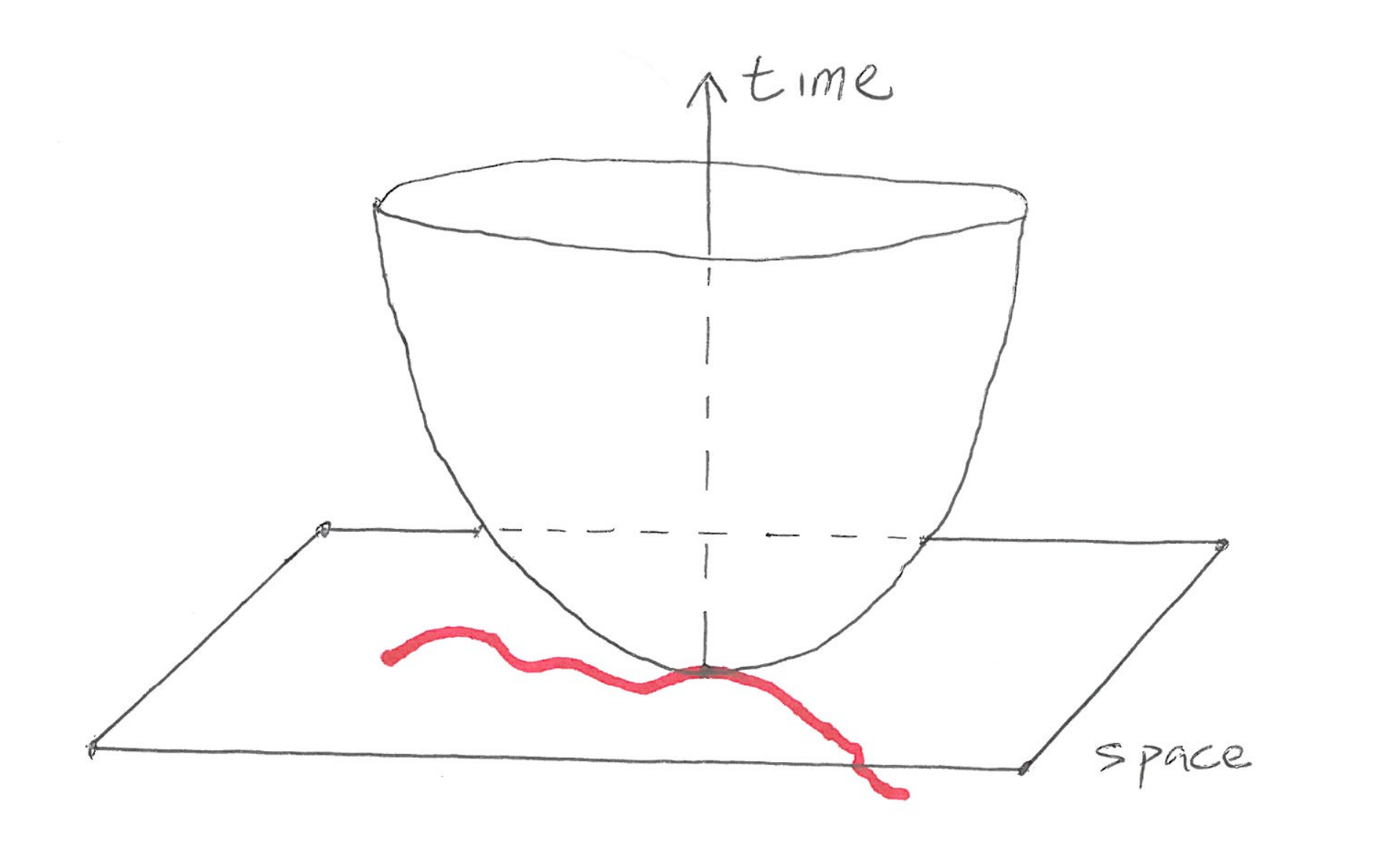}
\caption{The red set lies in the forward parabolic cone.}
%  It also illustrates that uniqueness of blowups is closely related to rectifiability.}  
 \label{f:conehalf}
  \end{figure}

 The next proposition shows that a set  satisfying the parabolic cone property is a $2$-H\"older graph $(x,u(x))$ where $x$ is in space and $t=u(x)$.

\begin{Pro}	\label{p:alphagraph}
If $S \subset \RR^{n+1}\times \RR$ has the $\gamma$-local parabolic cone property, then $S$ 
is locally the graph{\footnote{The function $u$ may be multi-valued, but the projection from $S$ to $\{ t=0 \}$ is a finite-to-one covering map.}} of a $2$-H\"older regular function $u$ with H\"older constant $\gamma$
\begin{align}
	 S=\{(x,u(x))\,|\, x \in \Omega  \subset \{ t= 0 \} \} = {\text{Graph}}_u \, .
\end{align}
\end{Pro}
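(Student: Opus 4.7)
The plan is to argue directly from the cone property that the spatial projection $\Pi: S \to \RR^{n+1}$ is injective on small parabolic balls, and that its inverse — which reads off the time coordinate as a function of the spatial coordinate — is itself the sought-after $2$-H\"older function.

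First I would fix $z_0 \in S$ and let $y_1, y_2$ be any two points in $\cPB_{r_0/2}(z_0) \cap S$. Using the (straightforward) triangle inequality for the parabolic distance one gets $\distp(y_1,y_2) \leq \distp(y_1,z_0) + \distp(y_2,z_0) < r_0$, so $y_2 \in \cPB_{r_0}(y_1) \cap S$. Applying the $\gamma$-local parabolic cone property at $y_1$ then gives
\begin{align}
|t(y_1) - t(y_2)| \leq \gamma \, |\Pi(y_1) - \Pi(y_2)|^2 \, .
\end{align}
In particular, if $\Pi(y_1) = \Pi(y_2)$ then $t(y_1) = t(y_2)$ and hence $y_1 = y_2$, so $\Pi$ restricted to $\cPB_{r_0/2}(z_0) \cap S$ is injective.

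Next, setting $\Omega_{z_0} := \Pi(\cPB_{r_0/2}(z_0) \cap S) \subset \{t=0\}$ and defining $u(x) := t(\Pi^{-1}(x))$ on $\Omega_{z_0}$ produces a well-defined single-valued function, and the displayed inequality is exactly the $2$-H\"older estimate with constant $\gamma$. By construction $\cPB_{r_0/2}(z_0) \cap S = \{(x,u(x)) \, | \, x \in \Omega_{z_0}\}$, which is the claimed local graph representation. For the footnote's multi-valued / finite-to-one global statement, one covers a compact piece of $S$ by finitely many such parabolic half-balls; distinct sheets over the same $x$ necessarily have parabolic separation $\geq r_0/2$, so only finitely many sheets can lie above any bounded spatial region.

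I do not expect any genuine obstacle: the analytic content is essentially a one-line rewriting of the cone inequality as a H\"older bound, and the only real task is the bookkeeping of ``locally'' (choice of the scale $r_0/2$, definition of $\Omega_{z_0}$, and the verification that the global projection is a finite covering rather than a single-valued graph). The whole argument is parallel in spirit to the proof of Lemma \ref{l:2p2}, but considerably simpler since no comparison of distinct approximating planes at nearby points is required.
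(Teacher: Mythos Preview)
Your proposal is correct and follows essentially the same route as the paper: both argue that the cone inequality $|t(y)-t(z)|\le \gamma\,|\Pi(y)-\Pi(z)|^2$ immediately gives injectivity of $\Pi$ on a small parabolic ball and that the inverse $u(\Pi(z))=t(z)$ is the desired $2$-H\"older function. If anything you are slightly more careful than the paper, which simply says ``fix a parabolic ball where the parabolic cone property holds'' without spelling out the $r_0/2$ triangle-inequality step, and you also explicitly address the finite-to-one footnote, which the paper's proof leaves implicit.
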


\begin{proof}
Fix a parabolic ball where the parabolic cone property holds; we will show that $S$ is a graph in this ball.  From now on, we will work only inside this ball.

Given $y$ and $z$ in $S$ (inside this ball),  
the local parabolic cone property gives
\begin{align}	\label{e:locone}
 	\gamma\,\left| \Pi (y) - \Pi (z) \right|^2\geq |t(y) - t(z)| \, .
\end{align}
It follows immediately that the projection $\Pi : S \to \{ t = 0 \}$ is one to one and, thus, that $S$ is a graph of a function $u$ defined by 
\begin{align}	\label{e:defineu}
	u (\Pi (z)) = t(z) \, .
\end{align}
over a subset $\Omega = \Pi (S) \subset \{ t = 0 \}$.  The $2$-H\"older bound follows from \eqr{e:locone}.
\end{proof}

The next corollary gives a condition which ensures that a  set is contained in a time-slice.
 
\begin{Cor}  \label{c:timeslice}
If $S \subset \RR^{n+1}\times \RR$ has the vanishing parabolic cone property and $\cPH_2(S)<\infty$, then:
\begin{itemize}
\item $\cH_1(t(S))=0$,
\item   if $S$ is connected, then it is contained in a time-slice.
\end{itemize}
\end{Cor}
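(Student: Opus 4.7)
The plan is to reduce Corollary \ref{c:timeslice} to the graph version already established in Lemma \ref{l:timeslice}, by invoking Proposition \ref{p:alphagraph} to realize $S$ as a $2$-Hölder graph. The vanishing parabolic cone property means $\gamma = \gamma(r_0) \to 0$ as $r_0 \to 0$, which translates into $u$ being $2$-Hölder with vanishing constant in the sense defined just before Lemma \ref{l:timeslice}. So first I would apply Proposition \ref{p:alphagraph} to write $S = \{(x,u(x)) : x \in \Omega\}$ with $\Omega \subset \{t=0\} \cong \RR^{n+1}$ and $u$ locally $2$-Hölder with vanishing constant.

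Next I would observe that $\cPH_2(S) < \infty$ implies $\cH_2(\Omega) < \infty$. Indeed, for a $2$-Hölder function $u$ with (any) Hölder constant at most $\gamma \leq 1$, the parabolic distance on the graph satisfies
\begin{align}
	\distp((x,u(x)),(y,u(y))) = \max\{|x-y|,\,|u(x)-u(y)|^{1/2}\} = |x-y|
\end{align}
whenever $|x-y|$ is small, so $x \mapsto (x,u(x))$ is a bi-Lipschitz map from $\Omega$ (with Euclidean metric) onto $S$ (with parabolic metric). Hence $\cH_2(\Omega) \leq C\,\cPH_2(S) < \infty$. (This is essentially the content of Lemma \ref{l:sholder} in the forward direction.)

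With $\Omega$ of finite $\cH_2$ measure and $u$ a $2$-Hölder function with vanishing constant on $\Omega$, Lemma \ref{l:timeslice} applies directly and gives $\cH_1(t(S)) = 0$, proving the first bullet. For the second bullet, if $S$ is connected then $t(S)$ is connected as the continuous image of a connected set, and any connected subset of $\RR$ with zero $1$-dimensional Hausdorff measure is a single point, so $S$ lies in a time-slice.

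The only step that requires any care is verifying that the graph function provided by Proposition \ref{p:alphagraph} indeed inherits the \emph{vanishing} Hölder property from the \emph{vanishing} cone property --- this is where one uses the quantitative version of the cone property on each scale $r_0$ to get $|u(x)-u(y)| \leq \gamma(|x-y|)\,|x-y|^2$ with $\gamma \to 0$, matching the hypothesis of Lemma \ref{l:timeslice}. Everything else is bookkeeping, and no new geometric input is needed beyond the two preceding results.
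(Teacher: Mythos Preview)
Your proposal is correct and follows essentially the same route as the paper: invoke Proposition \ref{p:alphagraph} to get a local $2$-H\"older graph, use Lemma \ref{l:sholder} to pass from $\cPH_2(S)<\infty$ to $\cH_2(\Omega)<\infty$, observe that the vanishing cone property yields a vanishing H\"older constant, and then apply Lemma \ref{l:timeslice}. The only cosmetic point is that Proposition \ref{p:alphagraph} gives a \emph{local} graph (possibly finite-to-one globally, cf.\ the footnote there), so one should work on a countable cover by parabolic balls of radius $r_0$; since the first bullet is a measure-zero statement it passes to countable unions, and the connectedness argument for the second bullet is unaffected.
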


\begin{proof}
By Proposition \ref{p:alphagraph} and Lemma \ref{l:sholder}, $S$ is locally the graph of a $2$-H\"older regular function $u$ on some domain $\Omega$ in space with finite $\cH_2$ measure.  
The vanishing parabolic cone property implies that $u$ is $2$-H\"older with vanishing constant.
The corollary now follows from Lemma \ref{l:timeslice}.
\end{proof}

The same argument gives that if $S\subset \RR^{n+1}\times \RR$ has the vanishing parabolic cone property and $\cPH_1(S)<\infty$, then $\cPH_1(t(S)) = \cH_{ \frac{1}{2} } (t(S))=0$.

 \section{Rapid clearing out}

In this section, 
we show that the entire flow is rapidly 
clearing out {\emph{after}} a cylindrical singularity; this will not be used elsewhere.

\begin{Thm}		\label{t:halfcone}
There exist
 constants $T , \omega > 1$ so that if  $M_t$ is $(j,\eta)$-cylindrical at $(x_0 , t_0)$ on the time-scale $\tau > 0$ for some $\eta < 1$, then for $s \in (0,\tau)$
\begin{align}	\label{e:localconeHALFm}
	  B_{\eta^{-1} \, \frac{\sqrt{s}}{2} }(x_0)  \cap M_t  = \emptyset {\text{ for }} t \in \left( t_0 + (T-1)s ,  \, t_0 + \left( \frac{\eta^{-2} - 4\omega^2}{4 \omega^2} \right) \, s \right) \, .
\end{align}
\end{Thm}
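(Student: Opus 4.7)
The plan is to apply Huisken's monotonicity formula and show that the Gaussian density $\Theta_{x,t}$ at any space-time point $(x,t)$ with $x \in B_{\eta^{-1}\sqrt{s}/2}(x_0)$ and $t$ in the specified interval is strictly less than one; since the density is at least one on the support of the flow, this forces $x \notin M_t$, giving the emptiness claim. The natural scale is $\tilde\tau := t - t_0 + s$, so that $\tilde\tau \in (Ts,\, \eta^{-2}s/(4\omega^2))$ as $t$ ranges over the claimed interval, and the reference time $t - \tilde\tau = t_0 - s$ is precisely where the cylindrical hypothesis applies. Monotonicity then gives $\Theta_{x,t} \leq F_{x,\tilde\tau}(M_{t_0-s})$, and the goal is to bound the right-hand side by a constant less than one.

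I split $F_{x,\tilde\tau}(M_{t_0-s}) = F_{\text{in}} + F_{\text{out}}$ according to whether integration takes place over $M_{t_0-s}\cap B_{\eta^{-1}\sqrt{s}}(x_0)$ or its complement. For $F_{\text{out}}$, every point is at distance at least $r := \eta^{-1}\sqrt{s}/2$ from $x$; the elementary inequality $e^{-|y-x|^2/(4\tilde\tau)} \leq e^{-r^2/(8\tilde\tau)}\,e^{-|y-x|^2/(8\tilde\tau)}$ combined with the entropy bound $F_{x,2\tilde\tau}(M_{t_0-s}) \leq \lambda_0$ yields $F_{\text{out}} \leq 2^{n/2}\lambda_0\, e^{-r^2/(8\tilde\tau)}$. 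The upper bound $\tilde\tau \leq \eta^{-2}s/(4\omega^2)$ translates into $r^2/(8\tilde\tau) \geq \omega^2/8$, so $F_{\text{out}} \leq 2^{n/2}\lambda_0\, e^{-\omega^2/8}$, which is less than $\tfrac{1}{4}$ once $\omega$ is chosen large depending only on $n$ and $\lambda_0$.

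For $F_{\text{in}}$, the $(j,\eta)$-cylindrical hypothesis identifies $M_{t_0-s}\cap B_{\eta^{-1}\sqrt{s}}(x_0)$ as a $C^1$-$\eta$ graph over the cylinder $\Psi_{x_0,\sqrt{s}}(\cC)$ of radius $R := \sqrt{2(n-j)s}$ about the axis $V$ through $x_0$. Up to a multiplicative factor $1 + O(\eta) + O(s/\tilde\tau)$ arising from the area element and Gaussian weight perturbations, $F_{\text{in}}$ is bounded by the $F$-functional of the exact cylinder inside the ball. Factoring this integral along $\RR^j\times \SS^{n-j}$, noting that truncation of the axial Gaussian at radius $\eta^{-1}\sqrt{s}$ lies deep in the tail (again by the $\omega$-bound), and bounding the spherical integrand by its maximum $1$ gives
\begin{align}
F_{\text{in}} \leq (1+o(1))\,\omega_{n-j}\Bigl(\frac{(n-j)s}{2\pi\tilde\tau}\Bigr)^{(n-j)/2},
\end{align}
uniformly in $x$. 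The crucial feature is that this bound does \emph{not} degenerate when $x$ sits near the cylinder surface: the scale $\tilde\tau \geq Ts \gg R^2$ makes the cylinder cross-section small compared to $\sqrt{\tilde\tau}$, and the spherical integral is bounded by $|\SS^{n-j}_R|$ regardless of where $x$ lies. Choosing $T$ large enough (depending only on $n$ and $\lambda_0$) drops this below $\tfrac{1}{4}$.

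Summing, $F_{x,\tilde\tau}(M_{t_0-s}) < 1$, so monotonicity forces $\Theta_{x,t} < 1$ and hence $x\notin M_t$. The main technical point is the uniform estimate of $F_{\text{in}}$ over all $x \in B_{\eta^{-1}\sqrt{s}/2}(x_0)$: naively $x$ can land on or near the cylinder surface at time $t_0-s$, where one might worry that the local mass of $M_{t_0-s}$ produces $F\geq 1$; the point of taking $\tilde\tau\gg R^2$ is precisely to wash out this local structure so that the bound becomes $x$-independent. The nonvacuity constraint $\eta < 1/(2\omega\sqrt{T})$ automatically keeps the $O(\eta)$-graph perturbation error controlled by the same universal constants $T$ and $\omega$, which depend only on $n$ and the entropy bound $\lambda_0$.
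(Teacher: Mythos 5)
Your proposal is correct and follows essentially the same route as the paper: apply Huisken's monotonicity with scale $\tilde\tau=t-t_0+s$ so that the relevant $F$-functional is evaluated on the cylindrical slice $M_{t_0-s}$, and bound it strictly below $1$ by splitting into a near region (small because cylinders have sub-Euclidean volume growth at the large scale $\sqrt{\tilde\tau}\geq\sqrt{Ts}$) and a far region (small by Gaussian decay combined with the entropy bound). The paper packages the key estimate as Lemma~\ref{l:cyl}, centering the near ball $B_{\omega\sqrt{t}}(x)$ at $x$ and invoking the volume bound $\Vol(B_R\cap\Sigma)\leq c_n R^{n-1}$ rather than your explicit Fubini computation of the cylinder's $F$-functional, but this is a cosmetic repackaging of the same ingredients.
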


\vskip2mm
Note that the proposition only has content when $\eta$ is small enough that $ T  \leq \frac{\eta^{-2} }{4 \omega^2} $.

\vskip2mm
The key for the theorem is a local estimate for the Gaussian areas.  Since the cylinder has sub-Euclidean volume growth, its Gaussian surface area $F_{x,t}$ is small when $t$ is large.  The next lemma observes that this is true for any hypersurface $\Sigma$ close to a cylinder.

\begin{Lem}	\label{l:cyl}
There exist $T> 1$ and $\omega > 1$ so that if $\lambda (\Sigma) \leq \lambda_0$ and 
\begin{itemize}
\item  $B_{\eta^{-1}} \cap \Sigma$ is a $C^1$ graph over a cylinder $\cC \in \cC_j$ with norm at most one,
\end{itemize}
then $ F_{x,t} (\Sigma) \leq \frac{1}{2}$ as long as  $t \geq T$ and   $|x| + \omega \sqrt{t} \leq \eta^{-1}$.
\end{Lem}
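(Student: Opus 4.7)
The plan is to split the Gaussian integral at the spatial scale $R = \omega \sqrt{t}$ around $x$ and treat the two pieces independently. The hypothesis $|x| + \omega\sqrt{t} \leq \eta^{-1}$ is designed precisely so that $B_R(x) \subset B_{\eta^{-1}}$, which ensures that the near piece $\Sigma \cap B_R(x)$ is still a $C^1$ graph over the cylinder $\cC \in \cC_j$; the far piece will be controlled purely by the entropy bound.

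For the tail $\Sigma \setminus B_R(x)$, I would apply the standard Gaussian cut-off trick: since $|y-x| \geq \omega\sqrt{t}$ there, writing $e^{-|y-x|^2/(4t)} = e^{-|y-x|^2/(8t)} \cdot e^{-|y-x|^2/(8t)}$ and pointwise bounding the first factor gives
\begin{align*}
F_{x,t}(\Sigma \setminus B_R(x)) \leq 2^{n/2}\, e^{-\omega^2/8}\, F_{x,2t}(\Sigma) \leq 2^{n/2}\, e^{-\omega^2/8}\, \lambda_0 \, .
\end{align*}
Choosing $\omega = \omega(n,\lambda_0)$ large forces this piece below, say, $1/4$.

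For the near piece, I would parametrize $\Sigma \cap B_R(x)$ as a graph $y = p + u(p)\,\nn(p)$ over $\cC$, where $p$ ranges in a subset of $\cC$ within distance $R+1$ of $x$. The Jacobian is bounded by some $C(n)$ since $|\nabla u| \leq 1$, and the elementary inequality $|y-x|^2 \geq \tfrac{1}{2}|p-x|^2 - 1$ (a consequence of $|u| \leq 1$) yields
\begin{align*}
F_{x,t}(\Sigma \cap B_R(x)) \leq C(n)\, F_{x, 2t}(\cC) \, .
\end{align*}
A direct product-measure computation, exploiting that $\cC = \RR^j \times \SS^{n-j}_{\sqrt{2(n-j)}}$ is translation-invariant along its $\RR^j$-axis and that the transverse spherical Gaussian integral is bounded, yields
\begin{align*}
F_{x,t}(\cC) \leq C(n,j)\, t^{-(n-j)/2} \, ,
\end{align*}
uniformly in $x$ and in the orientation of the axis. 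Since $j < n$, choosing $T = T(n,j,\lambda_0,\omega)$ large forces this piece below $1/4$ as well, and summing the two estimates completes the proof.

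The main obstacle is not any deep difficulty but rather the order in which the two constants must be chosen: $\omega$ has to be fixed first (depending on $n$ and $\lambda_0$) to subdue the tail using the entropy, and only afterward can $T$ be taken large (using the cylinder decay, which crucially relies on $j<n$) to make the near piece small. No part of the argument invokes uniqueness of tangent flows or any structural result from \cite{CM2} — only Huisken monotonicity (through the entropy bound) and the graph hypothesis on $B_{\eta^{-1}}$ are used.
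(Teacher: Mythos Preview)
Your proof is correct and follows the same overall strategy as the paper: split the Gaussian integral at radius $\omega\sqrt{t}$ about $x$, control the tail via the entropy bound $\lambda_0$, control the near piece via the sub-Euclidean nature of the cylinder, and fix $\omega$ first, then $T$. The technical implementations of the two pieces differ slightly. For the tail, the paper uses a dyadic annular decomposition together with the Euclidean volume bound $\Vol(B_R(x)\cap\Sigma)\leq C\lambda_0 R^n$ coming from the entropy, summing $\sum_k (k+1)^n \e^{-\omega^2 k^2/4}$; you instead peel off a Gaussian factor and compare directly to $F_{x,2t}(\Sigma)\leq\lambda_0$, which is cleaner. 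For the near piece, the paper invokes the crude volume growth $\Vol(B_R(x)\cap\Sigma)\leq c_n R^{n-1}$ (valid for every $j\leq n-1$) to obtain decay $T^{-1/2}$, whereas you compare to $F_{x,2t}(\cC)$ and obtain the sharper $j$-dependent decay $t^{-(n-j)/2}$. Both executions are standard, and your order-of-constants remark is exactly the point the paper makes.
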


\begin{proof}
This follows from the sub-Euclidean volume growth of the cylinders.  Namely, there exists a constant $c_n$ depending only on $n$ so that 
\begin{align}	\label{e:lowg}
	\sup \, \left\{ \Vol ( B_R (x) \cap \Sigma ) \, | \, |x| + R \leq \eta^{-1} {\text{ and }} R \geq 1 \right\} \leq c_n \, R^{n-1} \, .
\end{align}
Suppose  that $t \geq T > 1$,  $x \in \RR^{n+1}$ and  $\omega > 1$ satisfy $|x| + \omega \sqrt{t} \leq \eta^{-1}$.  We have
\begin{align}
	F_{x,t} (\Sigma) &\leq 
	\left( 4 \pi t\right)^{ - \frac{n}{2} } \,  \Vol (B_{ \omega \sqrt{t} }(x) \cap \Sigma )
	+ \left( 4 \pi t\right)^{ - \frac{n}{2} } \,
	\int_{\Sigma \setminus B_{ \omega \sqrt{t} }(x)} \e^{ - \frac{ |y-x|^2}{4t} } \, dy \, .
\end{align}
Since $|x| + \omega \sqrt{t} \leq \eta^{-1}$, we can use \eqr{e:lowg} to 
  estimate the first term by
\begin{align}	\label{e:firstterm}
	\left( 4 \pi t\right)^{ - \frac{n}{2} } \,  \Vol (B_{ \omega \sqrt{t} }(x) \cap \Sigma ) &\leq \left( 4 \pi t\right)^{ - \frac{n}{2} } \, c_n \, \omega^{n-1} \, t^{ \frac{n-1}{2}}
	 \leq c_n \, \omega^{n-1} \, \left( 4 \pi \right)^{ - \frac{n}{2} } \, T^{ - \frac{1}{2} } \, .
\end{align}
For the second term, we have
\begin{align}
	&\int_{\Sigma \setminus B_{ \omega \sqrt{t} }(x)} \e^{ - \frac{ |y-x|^2}{4t} } \, dy = \sum_{k=1}^{\infty} 
	\int_{B_{(k+1) \,  \omega \sqrt{t} }(x) \cap \Sigma \setminus B_{k\, \omega \sqrt{t} }(x)} \e^{ - \frac{ |y-x|^2}{4t} } \, dy   \\
	&\qquad \leq \sum_{k=1}^{\infty} 
	 \Vol (B_{(k+1) \,  \omega \sqrt{t} }(x) \cap \Sigma) \,  \e^{ - \frac{ \omega^2 \, k^2}{4} }  \leq C \, \lambda_0 \, t^{ \frac{n}{2} } \, \omega^n \, \sum_{k=1}^{\infty} 
	  (k+1)^{n}    \,  \e^{ - \frac{ \omega^2 \, k^2}{4} } 
	 \notag \, ,
\end{align}
where $C$ depends on $n$.
Thus, we can take $\omega$ large enough so that
\begin{align}
	\left( 4 \pi t\right)^{ - \frac{n}{2} } \,
	\int_{\Sigma \setminus B_{ \omega \sqrt{t} }(x)} \e^{ - \frac{ |y-x|^2}{4t} } \, dy \leq \frac{1}{4} \, , 
\end{align}
and this same $\omega$ works independently of $t \geq 1$.  Finally, now that we have chosen $\omega$, we choose $T$ large enough to make 
\eqr{e:firstterm} at most $\frac{1}{4}$.

\end{proof}

The  argument in the proof of Lemma \ref{l:cyl} works more generally for     sub-Euclidean volume growth.  It does not work when the volume growth is Euclidean.

 \begin{proof}[Proof of Theorem \ref{t:halfcone}]
 To simplify notation, translate in space-time so that $x_0=0$ and $t_0 =0$.

 Let $T$ and $\omega$ be given by Lemma \ref{l:cyl}.  For each $s \in (0,\tau)$, we have that
  $B_{\eta^{-1}}  \cap \Psi_{0,\frac{1}{\sqrt{s}}} \, \left(  M_{-s} \right)$ is a graph over a fixed cylinder in $\cC_j$ of a function with $C^1$ norm at most $\eta$.
  In particular, since $\lambda (M_t) \leq \lambda_0$, Lemma \ref{l:cyl} gives that
 \begin{align}
	\sup \, \left\{ F_{x,t} (\Psi_{0,\frac{1}{\sqrt{s}}} \, \left(  M_{-s} \right)) \, | \,  t \geq T {\text{ and }} |x| + \omega \sqrt{t} \leq \eta^{-1} \right\} \leq \frac{1}{2} \, .
\end{align}
Restating this in terms of $M_s$, we get for $s \in (\tau , 0)$ and $|x| \leq \frac{1}{2} \, \eta^{-1} \, \sqrt{s}$  that
\begin{align}		\label{e:Flow}
	 F_{x,t} (  M_{-s}) \leq \frac{1}{2}   {\text{ if }}  Ts \leq  t      \leq
	  \frac{s}{4 \eta^{2}\, \omega^2} 
	  \, .
\end{align}
Combining this with the monotonicity \eqr{e:huisken} gives    
\begin{align}
	 \Theta_{x,t-s} \leq \frac{1}{2} {\text{ for }} Ts \leq  t      \leq
	  \frac{s}{4 \eta^{2}\, \omega^2}   \, .
\end{align}
The proposition follows since $\Theta_{x,t}  \geq 1$
 for $x$   in the support of $M_t$.   
 \end{proof}


\begin{thebibliography}{A}
  
\bibitem[AAG]{AAG} 
S. Altschuler, S.B. Angenent, and Y. Giga, 
\emph{Mean curvature flow through singularities for surfaces of rotation}, 
Journal of geometric analysis,  
Volume 5, Number 3, (1995) 293--358.

\bibitem[Al]{Al}
F. J. Almgren, Jr., \emph{Q-valued functions minimizing dirichlet's integral and the regularity of area minimizing rectiable currents up to codimension two}, preprint.

\bibitem[An]{An} 
B. Andrews, {\emph{Noncollapsing in mean-convex mean curvature flow}}. 
Geom. Topol. 16 (2012), no. 3, 1413--1418.

\bibitem[Ba]{Ba} 
R. Bamler, \emph{Long-time behavior of 3 dimensional Ricci flow -- Introduction}, preprint.


\bibitem[BeWa]{BeWa} 
J. Bernstein and L. Wang, 
\emph{A Remark on a Uniqueness Property of High Multiplicity Tangent Flows in Dimension Three}, 
preprint, http://arxiv.org/abs/1402.6687.
 
\bibitem[B]{B} 
K. Brakke,  
\emph{The motion of a surface by its mean curvature}. 
Mathematical Notes, 20. Princeton University Press, Princeton, N.J., 1978.

%\bibitem[BrH]{BrH} 
%S. Brendle and G. Huisken, 
%\emph{Mean curvature flow with surgery of mean convex surfaces in $\RR^3$}, 
%preprint, arXiv:1309.1461.

\bibitem[ChCT]{ChCT}J. Cheeger, T.H. Colding, and G. Tian, \emph{Constraints on singularities under Ricci curvature bounds}. C. R. Acad. Sci. Paris S\'er. I Math. 324 (1997), no. 6, 645--649.

\bibitem[CGG]{CGG}
Y. G. Chen, Y. Giga, and S. Goto, 
{\emph{Uniqueness and existence of viscosity solutions of generalized mean curvature flow equations}}, J. Diff. Geom., 33 (1991), 749--786.
 
\bibitem[CIM]{CIM}
T.H. Colding, T. Ilmanen and W.P. Minicozzi II,
\emph{Rigidity of generic singularities of mean curvature flow}, Public. IHES,  DOI 10.1007/s10240-015-0071-3, 
arXiv:1304.6356.

\bibitem[CK]{CK}
T.H. Colding and B. Kleiner, 
\emph{Singularity structure in mean curvature flow of mean-convex sets}. 
Electron. Res. Announc. Amer. Math. Soc. 9 (2003), 121--124.


\bibitem[CM1]{CM1}
T.H. Colding and W.P. Minicozzi II,
\emph{Generic mean curvature flow I; generic singularities},  Annals of Math.,  175 (2012), Issue 2, 755--833.
%http://arxiv.org/pdf/0908.3788.

%\bibitem[CM2]{CM2}
%\bysame, 
%\emph{Smooth compactness of self-shrinkers}, Comm. Math. Helv., 87 (2012)
%463--475.

\bibitem[CM2]{CM2}
\bysame,  \emph{Uniqueness of blowups and \L{}ojasiewicz inequalities}, Annals of Math., to appear, 
arXiv:1312.4046.
 
 \bibitem[CM3]{CM3}
\bysame,   
{\emph{The space of embedded minimal surfaces of fixed genus in a 3-manifold. IV. Locally simply connected.}}
 Annals of Math. (2) 160 (2004), no. 2, 573--615. 

\bibitem[CM4]{CM4}
\bysame, {\emph{The space of embedded minimal surfaces of fixed genus in a 3-manifold. V. Finite genus}},  Annals of Math.,   
181 (2015), 1--153.

 \bibitem[CM5]{CM5}
\bysame, 
 \emph{\L{}ojasiewicz inequalities and applications}, Surveys in Differential Geometry XIX, International Press, to appear, arXiv:1402.5087.
 
  \bibitem[CM6]{CM6}
\bysame, 
 \emph{Differentiability of the arrival time}, preprint, 
  arXiv:1501.07899.
  
  \bibitem[CMP]{CMP}
T.H. Colding, W.P. Minicozzi II and E.K. Pedersen,
{\emph{Mean curvature flow}},
Bulletin of the AMS, 52 (2015), no. 2, 297--333.


 
 \bibitem[DKT]{DKT} 
 G. David, C. Kenig, and T. Toro, 
 \emph{Asymptotically optimally doubling measures and Reifenberg flat sets with vanishing constant}, 
 Comm. Pure Appl. Math. 54 (2001), no. 4, 385--449.
 
 \bibitem[D]{D}
 C. De Lellis,  
 \emph{Rectifiable sets, densities and tangent measures}. Zurich Lectures in Advanced Mathematics. European Mathematical Society (EMS), Z\"urich, 2008.
 
 
\bibitem[E]{E}  
K. Ecker, \emph{Regularity theory for mean curvature flow}. 
Progress in Nonlinear Differential Equations and their Applications, 57. 
Birkh\"auser Boston, Inc., Boston, MA, 2004.

\bibitem[ES]{ES}
L. C. Evans and J. Spruck, {\emph{Motion of level sets by mean curvature}} I, J. Diff. Geom., 33 (1991), no. 3, 635--681; II, Trans. Amer. Math. Soc., 330 (1992), no. 1, 321--332; III, J. Geom. Anal., 2 (1992), no. 2, 121--150; IV, J. Geom. Anal., 5 (1995), no. 1, 77--114.

\bibitem[F1]{F1}
H. Federer,  \emph{Geometric measure theory}. 
Die Grundlehren der mathematischen Wissenschaften, Band 153 Springer-Verlag New York Inc., New York 1969.

\bibitem[F2]{F2}
\bysame,
\emph{The singular sets of area minimizing rectifiable currents with codimension one and of area minimizing flat chains modulo two with arbitrary codimension}. 
Bull. Amer. Math. Soc. 76 (1970) 767--771. 

 \bibitem[HaK]{HaK}
 R. Haslhofer and B. Kleiner, 
 {\emph{Mean curvature flow of mean convex hypersurfaces}}, preprint, 
 http://arXiv:1304.0926.
 
% \bibitem[HaK2]{HaK2}
% \bysame, 
% {\emph{Mean curvature flow with surgery}}, preprint, http://arxiv:1404.2332.
 
 \bibitem[HoW]{HoW}
 G.  Hong and L. Wang, {\emph{A geometric approach to the topological disk theorem of Reifenberg}},
  Pacific J. Math. 233 (2007), no. 2, 321--339.
 
\bibitem[H1]{H1}
G. Huisken,
\emph{Asymptotic behavior for singularities of the mean curvature flow}.
J. Differential Geom. 31 (1990), no. 1, 285--299.

\bibitem[H2]{H2}\bysame,
\emph{Local and global behaviour of hypersurfaces moving by mean curvature}.
Differential geometry: partial differential equations on manifolds (Los Angeles, CA, 1990), 175--191,
Proc. Sympos. Pure Math., 54, Part 1, Amer. Math. Soc., Providence, RI, 1993.
 
\bibitem[HS1]{HS1}
G. Huisken and C. Sinestrari, 
\emph{ Convexity estimates for mean
curvature flow and singularities of mean convex surfaces}, 
Acta Math. 183 (1999) no. 1, 45--70.

\bibitem[HS2]{HS2}
\bysame, 
\emph{Mean curvature flow singularities for mean convex surfaces}, 
Calc. Var. Partial Differ. Equ. 8 (1999), 1--14.

\bibitem[HS3]{HS3}
\bysame, 
\emph{Mean curvature flow with surgeries of two-convex hypersurfaces}. 
Invent. Math. 175 (2009), no. 1, 137--221.

 \bibitem[I1]{I1}
T. Ilmanen,
\emph{Singularities of Mean Curvature Flow of Surfaces}, preprint, 1995, \\
http://www.math.ethz.ch/\~{}/papers/pub.html.

 \bibitem[I2]{I2}
\bysame, 
\emph{Elliptic regularization and partial regularity for motion by mean curvature}, 
Memoirs Amer. Math. Soc.  
520 (1994). 

\bibitem[LjT]{LjT}
J. Li and G. Tian,  \emph{The blow-up locus of heat flows for harmonic maps}. Acta Math. Sin. (Engl. Ser.) 16 (2000), no. 1, 29--62.


%\bibitem[KL]{KL}
% B. Kleiner and J. Lott, \emph{Ricci flow through singularities}, in preparation.

\bibitem[LW]{LW}
F.-H. Lin and C.-Y. Wang, \emph{Harmonic and quasi-harmonic spheres. III. Rectifiability of the parabolic defect measure and generalized varifold flows}. Ann. Inst. H. Poincar� Anal. Non Lin�aire 19 (2002), no. 2, 209--259.

\bibitem[LY]{LY}
F.-H. Lin and X. Yang,  
\emph{Geometric measure theory -- an introduction}. 
Advanced Mathematics (Beijing/Boston), 1. Science Press Beijing, Beijing; International Press, Boston, MA, 2002. 

\bibitem[OS]{OS}
S. Osher and J. Sethian, 
{\emph{Fronts propagating with curvature-dependent speed: algorithms based on Hamilton-Jacobi formulations}}, 
J. Comput. Phys., 79 (1988), 12--49.

 \bibitem[P]{P}
 D. Preiss,
\emph{Geometry of measures in $\RR^n$: distribution, rectifiability, and densities}.
Ann. of Math. (2) 125 (1987), no. 3, 537--643. 

 \bibitem[PTT]{PTT}
 D. Preiss, X. Tolsa and T. Toro, 
 \emph{On the smoothness of H\"older-doubling measures}, 
 Calculus of Variations and PDE 35 (2009), 339--363.

\bibitem[R]{R} 
E.R. Reifenberg, 
\emph{Solution of the Plateau Problem for m-dimensional surfaces of varying topological type}. 
Acta Math. 104 (1960), 1--92.

  \bibitem[S1]{S1} 
 L. Simon, \emph{Rectifiability of the Singular Sets of Multiplicity 1 Minimal Surfaces and Energy Minimizing Maps}, 
 Surveys in
Diff. Geom. 2 (1995), 246--305.

  \bibitem[S2]{S2}  
  \bysame, \emph{Rectifiability of the singular set of energy minimizing maps}, 
  Cal. of Var. and PDE's, (1995), Volume 3, Issue 1, 1--65.

\bibitem[SrSs]{SrSs}
H. Soner and P. Souganidis, {\emph{Singularities and uniqueness of cylindrically symmetric surfaces moving by mean curvature}}.
 Comm. Partial Differential Equations 18 (1993), no. 5-6, 859--894.

\bibitem[T]{T} 
T. Toro, \emph{Doubling and Flatness: Geometry of Measures}, 
Notices of the AMS, 44, 1997, 1087--1094.

\bibitem[W1]{W1}
B. White, 
\emph{The nature of singularities in mean curvature flow of mean-convex sets}.  
J. Amer. Math. Soc.  16  (2003),  no. 1,
123--138.

\bibitem[W2]{W2}
\bysame, \emph{The size of the singular set in mean curvature flow of mean-convex sets}, 
J. Amer. Math. Soc. 13 (2000), 665--695.

\bibitem[W3]{W3}
\bysame,
\emph{Evolution of curves and surfaces by mean curvature}. 
Proceedings of the International Congress 
of Mathematicians, Vol. I (Beijing, 2002), 525--538. 

%\bibitem[W3]{W3}
%\bysame,
%\emph{A local regularity theorem for mean curvature flow}.
%Ann. of Math. 161 (2005), 1487--1519.

\bibitem[W4]{W4}
%\bysame,
%\emph{The mathematics of F. J. Almgren, Jr.}
%J. Geom. Analysis 8 (1998), no. 5, 681--702.

%\bibitem[W5]{W5}
\bysame,
\emph{Partial regularity of mean-convex hypersurfaces flowing by mean curvature},
Int. Math. Res. Notices (1994)  185--192.

\bibitem[W5]{W5}
\bysame,
\emph{Stratification of minimal surfaces, mean curvature flows, and harmonic maps}, J. Reine Angew. Math. 488 (1997), 1--35.

\bibitem[W6]{W6}
\bysame,
\emph{Subsequent singularities in mean-convex mean curvature flow}, 
arXiv:1103.1469, 2011.
 
\end{thebibliography}
\end{document}